\documentclass[11pt,reqno]{amsart}

\usepackage[T1]{fontenc}
\usepackage{lmodern}
\usepackage{microtype}
\usepackage{mathtools}
\usepackage{amssymb,amsfonts}
\usepackage{booktabs}
\usepackage{enumitem}
\usepackage{aliascnt}
\usepackage{xcolor}
\usepackage{hyperref}
\usepackage[nameinlink,capitalise]{cleveref}
\usepackage{pgfplots}
\pgfplotsset{compat=1.18}

\definecolor{deepblue}{RGB}{27,73,128}
\definecolor{warmred}{RGB}{173,56,47}
\definecolor{forest}{RGB}{34,112,75}
\hypersetup{
  colorlinks=true,
  linkcolor=deepblue,
  citecolor=forest,
  urlcolor=warmred,
  pdftitle={Fixed Convex-Lens Spectral Constants},
  pdfauthor={Zijian Zeng}
}

\setlist{itemsep=2pt,topsep=4pt}
\numberwithin{equation}{section}

\newtheorem{theorem}{Theorem}[section]
\newaliascnt{proposition}{theorem}
\newtheorem{proposition}[proposition]{Proposition}
\aliascntresetthe{proposition}
\newaliascnt{lemma}{theorem}
\newtheorem{lemma}[lemma]{Lemma}
\aliascntresetthe{lemma}
\newaliascnt{corollary}{theorem}
\newtheorem{corollary}[corollary]{Corollary}
\aliascntresetthe{corollary}
\newaliascnt{remark}{theorem}
\newtheorem{remark}[remark]{Remark}
\aliascntresetthe{remark}
\theoremstyle{definition}
\newaliascnt{definition}{theorem}

\aliascntresetthe{definition}

\newcommand{\C}{\mathbb C}
\newcommand{\Hh}{\mathcal H}
\newcommand{\B}{\mathcal B}

\newcommand{\Sp}{\operatorname{Sp}}
\newcommand{\dist}{\operatorname{dist}}

\newcommand{\cL}{\mathcal L}
\newcommand{\cS}{\mathcal S}
\newcommand{\norm}[1]{\lVert#1\rVert}

\title[Fixed convex-lens spectral constants]{Fixed convex-lens spectral constants:\\
M\"obius reduction, sharp model theorems,\\
and angle-dependent bounds}
\author{Zijian Zeng}
\address{Institute of Computer Science and Digital Innovation, UCSI University,
Kuala Lumpur 56000, Malaysia}
\email{1002266693@ucsiuniversity.edu.my}
\date{17 August 2026}

\subjclass[2020]{Primary 47A25; Secondary 47A12, 30C35, 15A60}
\keywords{Spectral set, numerical range, convex lens, sectorial operator, von Neumann inequality, sharp constant}

\begin{document}
\raggedbottom

\begin{abstract}
For the intersection of two disks meeting at angle $2\alpha$, let $C(\alpha)$
be the least constant in the associated spectral-set inequality, uniformly
over operators for which each disk separately is a spectral set.  The exact
value of $C(\alpha)$ is unknown except at the disk endpoint $\alpha=\pi/2$.
We give a self-contained M\"obius reduction to the corresponding
numerical-range problem on a sector and compute the sharp constant on the
infinite-dimensional class of affine square-zero operators
$B=\lambda I+N$, $N^2=0$:
\[
             C_{\mathrm{sq0}}(\alpha)
             =\frac{\pi\sin\alpha}{2\alpha}.
\]
A $2\times2$ matrix and a conformal extremal attain equality and yield an
explicit lens lower-bound certificate.  At the right angle we prove the
conjectural $\sqrt2$ estimate, in arbitrary dimension, for the full
palindromic quadratic family.  Exact rational matrix certificates further
cover the complex post-automorphism disk $|c|\le19/20$, the complete
imaginary diameter and a transverse cusp, and boundary-reaching phase arcs
whose union misses only $9.17$ degrees of the parameter circle near $-1$.
For every symmetric three-node set in the right-angle disk coordinate, we
also prove the sharp identity-multiplier estimate on the complete
admissible-kernel cone; the proof combines an exact extreme-ray rank bound,
scalar Pick interpolation on the rank-one faces, and a two-variable
Bernstein certificate on the rank-$(2,2)$ face.  A nested exact certificate
extends this result to the genuinely asymmetric two-parameter patch
$\rho=(-a,0,b)$, $1/6\le a,b\le5/6$.
On a rank-one localized face we also prove a three-real-parameter singular-boundary
family: for $\Phi(c)=e^{i\theta}c^2$, $\sin\theta>1/\sqrt2$, the sharp
target is positive for every quadratic Blaschke product
$w(w-a)/(1-\bar a w)$ with $a\in\mathbb D$.
At the central square, a
polynomial bidisk extension and Ando's theorem prove
$\|w^2\|\le\kappa_0<\sqrt2$, where
$276889/127200-6103\sqrt2/10600=1.362560\ldots$.  We also derive an exact
two-complex-parameter consequence: the same construction proves a strict
$\sqrt2$ bound for every $b_\alpha b_\beta$ with
$|\alpha|,|\beta|\le1/150$, allowing two independently phased nonzero zeros.
We also derive an exact
two-moment criterion for the remaining boundary layer and a verified
angle-dependent envelope.  Every computer-assisted assertion has an exact
rational verifier.  These results are dimension-free but do not determine
the unrestricted fixed-lens constant.
\end{abstract}

\maketitle

\section{The fixed-lens problem and the scope of the result}

Let $D_1,D_2\subset\C$ be closed disks whose boundary circles meet in two
distinct points and whose intersection has nonempty interior.  We say that
$A\in\B(\Hh)$ is of $(D_1,D_2)$-type if
\begin{equation}\label{eq:type}
       \norm{A-c_jI}\le r_j,\qquad
       D_j=\{z:|z-c_j|\le r_j\},\quad j=1,2.
\end{equation}
By von Neumann's inequality, \eqref{eq:type} is equivalent to the assertion
that each $D_j$ is a spectral set for $A$.  If the two interior boundary arcs
meet at angle $2\alpha\in(0,\pi]$, Beckermann and Crouzeix proved that the
least uniform constant depends only on $\alpha$ and equals the numerical-range
constant of a sector of the same angle \cite{MR2223270}.
The sector estimates underlying this reduction go back to
\cite{Crouzeix_2003}; a complete uniform bound for intersections of
spherical disks was subsequently obtained in
\cite{MR2449098}.  Exact dimension-two constants, including the sector
constants used below, are characterized in \cite{MR2047592}, and the later survey
\cite{Crouzeix_2016} records numerical evidence in higher dimensions.

We use the following normalization:
\begin{align}
 D_\alpha^{\pm}
   &:=\{z\in\C:|z\mp i\cot\alpha|\le\csc\alpha\},\label{eq:disks}\\
 \cL_\alpha&:=D_\alpha^+\cap D_\alpha^-,\qquad
 \cS_\alpha:=\{re^{i\theta}:r\ge0,\ |\theta|\le\alpha\}.
\end{align}
The vertices of $\cL_\alpha$ are $-1$ and $1$.  Define
\begin{equation}\label{eq:constant}
 C(\alpha):=\sup_{\substack{\Hh,\,A\in\B(\Hh)\\
                         \norm{A\mp i\cot\alpha I}\le\csc\alpha}}
      \ \sup_{\substack{p\in\C[z]\\ p\ne0}}
       \frac{\norm{p(A)}}{\max_{z\in\cL_\alpha}|p(z)|}.
\end{equation}
This polynomial formulation is equivalent to the usual rational formulation;
the equivalence follows from Mergelyan approximation because the complement
of a convex lens is connected.

The unrestricted value of \eqref{eq:constant} remains open for
$0<\alpha<\pi/2$.  The main theorem of this paper is deliberately more
specific.  It identifies exactly what the standard first-order nonnormal
model can and cannot contribute.

\begin{theorem}[Sharp square-zero constant]\label{thm:main}
Let $0<\alpha\le\pi/2$.  Among all complex Hilbert spaces, all operators
\[
                B=\lambda I+N,\qquad N^2=0,\qquad W(B)\subset\cS_\alpha,
\]
and all functions $f$ holomorphic in the interior of $\cS_\alpha$, continuous
on its one-point compactification, and satisfying
$\norm{f}_{\infty}\le1$, one has
\begin{equation}\label{eq:mainupper}
                  \norm{f(B)}\le
                  \frac{\pi\sin\alpha}{2\alpha}.
\end{equation}
The constant is best possible.  Equality is attained on $\C^2$ by
\begin{equation}\label{eq:extsector}
 B_\alpha=I+2\sin\alpha
       \begin{pmatrix}0&1\\0&0\end{pmatrix},
 \qquad
 f_\alpha(z)=\frac{z^{\pi/(2\alpha)}-1}
                   {z^{\pi/(2\alpha)}+1},
\end{equation}
where the power is taken with $|\arg z|<\alpha$.
\end{theorem}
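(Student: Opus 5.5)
The plan is to reduce everything to a scalar inequality on the sector. I would use the first-order Taylor formula for square-zero perturbations, the exact shape of the numerical range of $\lambda I+N$, and the Schwarz--Pick lemma for the hyperbolic metric of $\cS_\alpha$.

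\emph{Step 1 (structure).} Since $N^2=0$, one has $W(\lambda I+N)=\{z:|z-\lambda|\le\norm{N}/2\}$, the closed disk of radius $\norm{N}/2$. The reason is that $N$ is an orthogonal sum of $2\times2$ blocks $\bigl(\begin{smallmatrix}0&s\\0&0\end{smallmatrix}\bigr)$ and zero blocks, each with numerical range a disk of radius $s/2$, and the numerical range of the sum is the convex hull of these disks. Hence $W(B)\subset\cS_\alpha$ is equivalent to
\[
\norm{N}\le 2d(\lambda),\qquad d(\lambda):=\dist(\lambda,\partial\cS_\alpha)=|\lambda|\sin(\alpha-|\arg\lambda|).
\]
Here we use $\alpha\le\pi/2$, so that the nearest boundary point lies on a ray rather than at the vertex. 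If $d(\lambda)=0$ then $N=0$ and $\norm{f(B)}=|f(\lambda)|\le1$. Otherwise $\lambda$ is interior, the Riesz calculus applies, and $f(B)=f(\lambda)I+f'(\lambda)N$. The same block decomposition gives
\[
\norm{f(B)}=\tfrac12\bigl(|f'(\lambda)|\norm{N}+\sqrt{|f'(\lambda)|^2\norm{N}^2+4|f(\lambda)|^2}\bigr),
\]
which is increasing in $\norm{N}$. So it suffices to take $\norm{N}=2d$. Writing $K=\pi\sin\alpha/(2\alpha)\ge1$, $x=|f'(\lambda)|d$ and $t=|f(\lambda)|^2$, the target $x+\sqrt{x^2+t}\le K$ is equivalent to $2Kx\le K^2-t$.

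\emph{Step 2 (Schwarz--Pick).} Let $\beta=\pi/(2\alpha)\ge1$. The map $z\mapsto z^\beta$ sends $\cS_\alpha$ onto the right half-plane, so the hyperbolic density of the sector at $z=re^{i\theta}$ is $\rho(z)=\beta/(2r\cos(\beta\theta))$. Schwarz--Pick gives $|f'(\lambda)|\le(1-t)\rho(\lambda)$. Since $(K^2-t)/(1-t)\ge K^2$ for $K\ge1$ and $t\in[0,1)$, it suffices to prove $2\rho(\lambda)d(\lambda)\le K$, that is,
\[
\beta\,\frac{\sin(\alpha-|\theta|)}{\cos(\beta\theta)}\le\beta\sin\alpha .
\]
Put $u=\alpha-|\theta|\in(0,\alpha]$, so that $\cos(\beta\theta)=\sin(\beta u)$; the claim becomes $\sin u/\sin(\beta u)\le\sin\alpha$. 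The function $g(u)=\sin(\beta u)/\sin u$ is nonincreasing on $(0,\alpha]$. Indeed, its derivative has the sign of $\beta\tan u-\tan(\beta u)$, which is $\le0$ because $\tan$ is convex on $[0,\pi/2)$ with $\tan0=0$ and $\beta\ge1$. Hence $g(u)\ge g(\alpha)=1/\sin\alpha$, with equality only at $u=\alpha$, i.e.\ $\theta=0$. I expect this elementary trigonometric monotonicity to be the only real obstacle. Boundedness of $\norm{f}_\infty$ together with continuity at $\infty$ is needed only to make $f$ a legitimate bounded holomorphic function for Schwarz--Pick.

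\emph{Step 3 (sharpness).} Every inequality above is tight at $\lambda=1$ ($\theta=0$, $d=\sin\alpha$), $\norm{N}=2\sin\alpha$, $f(1)=0$, and $f$ a conformal map of $\cS_\alpha$ onto the disk. The function $f_\alpha$ is exactly the half-plane Cayley map composed with $z^\beta$, and it satisfies $f_\alpha(1)=0$ and $|f_\alpha'(1)|=\beta/2$. Therefore
\[
\norm{f_\alpha(B_\alpha)}=\norm{f_\alpha'(1)\,2\sin\alpha\,E_{12}}=\beta\sin\alpha=\frac{\pi\sin\alpha}{2\alpha},
\]
where $E_{12}=\bigl(\begin{smallmatrix}0&1\\0&0\end{smallmatrix}\bigr)$. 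Finally, $W(B_\alpha)$ is the disk of radius $\sin\alpha$ about $1$, which is contained in $\cS_\alpha$.
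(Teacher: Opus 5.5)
Your proposal is correct and follows essentially the paper's proof: the disk numerical range of $\lambda I+N$, first-order truncation with the exact norm formula, Schwarz--Pick transported through $z\mapsto z^{\pi/(2\alpha)}$, the bound $\sin(\alpha-|\theta|)\le\sin\alpha\cos(\beta\theta)$, and the same extremal pair. The one cosmetic difference is in that trigonometric bound: you prove it by monotonicity in the angular position (convexity of $\tan$), whereas the paper fixes the relative position and varies the aperture (monotonicity of $t\cot t$). Two slips, neither affecting the argument: a non-compact square-zero $N$ is in general unitarily $(E_{12}\otimes A)\oplus0$ with $A\ge0$, not a literal orthogonal sum of $2\times2$ blocks, though the spectral theorem for $A$ gives the same disk and norm formula; and at $\alpha=\pi/2$ your $g$ is identically $1$, so your aside about equality only at $\theta=0$ fails there, though it is never used.
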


The corresponding lens matrix is exceptionally simple.

\begin{corollary}[Exact fixed-lens subclass]\label{cor:lensclass}
Restrict \eqref{eq:constant} to operators $A$ for which $1\notin\Sp(A)$ and
the Cayley transform satisfies
\[
                \bigl(\Phi(A)-\lambda I\bigr)^2=0
                \quad\text{for some }\lambda\in\C.
\]
The least constant on this subclass is exactly
\[
                   C_{\mathrm{sq0}}(\alpha)
                   =\frac{\pi\sin\alpha}{2\alpha}.
\]
Allowing direct sums with the scalar operator $1$ does not change this
constant.
\end{corollary}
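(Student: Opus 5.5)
The plan is to transport Theorem~\ref{thm:main} to the lens through the Cayley transform
\[
\Phi(z)=\frac{1+z}{1-z}.
\]
This map sends $-1\mapsto0$ and $1\mapsto\infty$, and it sends the two boundary arcs of $\cL_\alpha$ onto the rays $\arg w=\pm\alpha$. Hence it maps $\cL_\alpha\setminus\{1\}$ onto $\cS_\alpha$.

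\textbf{Step 1 (operator-level dictionary).} Fix $A$ with $1\notin\Sp(A)$, and put $B=\Phi(A)=(I+A)(I-A)^{-1}$. I will show that $\norm{A\mp i\cot\alpha I}\le\csc\alpha$ is equivalent to $W(B)$ lying in the closed half-plane $\Phi(D_\alpha^\pm)$. The argument writes $x=(I-A)y$ and expands
\[
\csc^2\alpha\,\norm{x}^2-\norm{(A\mp i\cot\alpha)x}^2\ge0 .
\]
This expression is a quadratic form in $(I-A)y$ and $(I+A)y$. After dividing by the positive factor coming from $|1-z|^2$, it becomes the condition $\operatorname{Re}\bigl(e^{\mp i(\pi/2-\alpha)}\langle By,y\rangle\bigr)\ge0$, or the analogous linear condition for the relevant half-plane.

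Intersecting the two half-plane conditions gives the equivalence
\[
A\text{ of }(D_\alpha^+,D_\alpha^-)\text{-type}\iff W(B)\subset\cS_\alpha .
\]
Since $(\Phi(A)-\lambda)^2=0$ says exactly that $B=\lambda I+N$ with $N^2=0$, the subclass in the statement corresponds bijectively to the operator class of Theorem~\ref{thm:main}.

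\textbf{Step 2 (upper bound).} Take a polynomial $p$ with $\max_{\cL_\alpha}|p|=1$, and set $f=p\circ\Phi^{-1}$. Then $f$ is holomorphic in the interior of $\cS_\alpha$, continuous on $\cS_\alpha\cup\{\infty\}$ with $f(\infty)=p(1)$, and $\norm{f}_\infty\le1$. The holomorphic functional calculi are compatible under the Möbius change of variables, since $\Phi^{-1}$ is rational with its pole outside $\Sp(B)\cup W(B)$. Therefore $p(A)=f(B)$, and Theorem~\ref{thm:main} gives $\norm{p(A)}\le\pi\sin\alpha/(2\alpha)$.

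For direct sums with the scalar $1$, the point $1\in\cL_\alpha$ satisfies both disk conditions. Moreover $\norm{p(A'\oplus1)}=\max(\norm{p(A')},|p(1)|)$ and $|p(1)|\le1$, so the constant is unchanged.

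\textbf{Step 3 (sharpness).} Let $A_\alpha=\Phi^{-1}(B_\alpha)$. Since $\Sp(B_\alpha)=\{1\}$, we get $\Sp(A_\alpha)=\{0\}$, so $1\notin\Sp(A_\alpha)$, and $A_\alpha$ lies in the subclass. The function $g=f_\alpha\circ\Phi$ is continuous on $\cL_\alpha$ with $|g|\le1$, holomorphic in the interior, and satisfies $g(A_\alpha)=f_\alpha(B_\alpha)$. By Mergelyan's theorem there are polynomials $p_n\to g$ uniformly on $\cL_\alpha$.

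\textbf{Main obstacle.} The delicate point is to ensure $p_n(A_\alpha)\to g(A_\alpha)$. The cleanest route avoids the vertex issue altogether. The function $g$ is holomorphic on a neighbourhood of $0=\Sp(A_\alpha)$, and $A_\alpha$ is a $2\times2$ matrix. Hence $g(A_\alpha)$ depends only on $g(0)$ and $g'(0)$.

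I would therefore first try to choose $p_n$ converging uniformly on $\cL_\alpha$ and also interpolating $g$ to first order at $0$. Mergelyan gives uniform convergence; the interpolation is then enforced by adding small corrections $a_n+b_nz$ whose coefficients tend to zero, using Cauchy estimates on a small disk about $0$ inside the interior of the lens. This gives $p_n(A_\alpha)=g(A_\alpha)$ exactly, so
\[
\frac{\norm{p_n(A_\alpha)}}{\max_{\cL_\alpha}|p_n|}\to\frac{\pi\sin\alpha}{2\alpha}.
\]
As a fallback, the known finite bound on $C(\alpha)$ gives continuity of the polynomial calculus in the sup norm on $\cL_\alpha$, which would yield the same convergence.
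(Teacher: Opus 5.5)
Your proposal is correct and follows essentially the paper's own route: the lens--sector dictionary of \cref{prop:equiv} (which you re-derive by a direct quadratic-form computation), \cref{thm:main} applied to $f=p\circ\Phi^{-1}$ for the upper bound, the max formula for a scalar summand at $1$, and for sharpness the nilpotent certificate $A_\alpha=\Phi^{-1}(B_\alpha)=\sin\alpha J$ with Mergelyan approximation and Cauchy estimates at $0$, exactly as in the proof of \cref{cor:lenscert}. Your exact first-order interpolation is a slightly tidier form of the paper's convergence $q_n'(0)\to g_\alpha'(0)$; the only slip is notational: in Step~1 the substitution should be $y=(I-A)x$, so that $By=(I+A)x$ and both $x$ and $Ax$ are affine in $y$ and $By$.
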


\begin{corollary}[Explicit lens certificate]\label{cor:lenscert}
Put
\begin{equation}\label{eq:Alens}
        A_\alpha:=\sin\alpha
        \begin{pmatrix}0&1\\0&0\end{pmatrix}.
\end{equation}
Then $A_\alpha$ is of $(D_\alpha^+,D_\alpha^-)$-type, with equality in both
disk constraints, and
\begin{equation}\label{eq:globallower}
                  C(\alpha)\ge
                  \frac{\pi\sin\alpha}{2\alpha}.
\end{equation}
More precisely, there is a sequence of polynomials $p_n$ with
$\max_{\cL_\alpha}|p_n|\le1+o(1)$ and
\[
       \norm{p_n(A_\alpha)}\longrightarrow
       \frac{\pi\sin\alpha}{2\alpha}.
\]
\end{corollary}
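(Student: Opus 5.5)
The plan is to verify the two disk constraints by a direct $2\times2$ norm computation, and then to transport the extremal pair $(B_\alpha,f_\alpha)$ of \cref{thm:main} to the lens through the Cayley map
\[
  \Phi(z)=\frac{1+z}{1-z},
\]
using Mergelyan approximation to produce the polynomials $p_n$. Write $s=\sin\alpha$, $c=\cot\alpha$ and $N=\begin{pmatrix}0&1\\0&0\end{pmatrix}$.

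\emph{Step 1 (disk constraints).} The matrix $A_\alpha\mp icI$ is upper triangular with diagonal $\mp ic$ and off-diagonal entry $s$. A matrix $\begin{pmatrix}a&b\\0&a\end{pmatrix}$ has norm $\tfrac12\bigl(|b|+\sqrt{|b|^2+4|a|^2}\bigr)$, so
\[
  \norm{A_\alpha\mp icI}=\tfrac12\bigl(s+\sqrt{s^2+4c^2}\bigr).
\]
This equals $1/s=\csc\alpha$ if and only if $\sqrt{s^2+4c^2}=2/s-s$, which is a positive quantity. Squaring reduces this to $4c^2=4(1-s^2)/s^2$, an identity. Hence both disk constraints hold with equality, and $A_\alpha$ is of $(D_\alpha^+,D_\alpha^-)$-type.

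\emph{Step 2 (Möbius transport).} The map $\Phi$ sends $-1\mapsto0$ and $1\mapsto\infty$. The two boundary arcs of $\cL_\alpha$ meet at $\pm1$ at angle $2\alpha$, symmetric about $\mathbb R$, so $\Phi(\cL_\alpha\setminus\{1\})=\cS_\alpha$.

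Because $N^2=0$ and $1\notin\Sp(A_\alpha)$, we get $(I-sN)^{-1}=I+sN$ and therefore
\[
  \Phi(A_\alpha)=(I+sN)^2=I+2sN=B_\alpha.
\]
We also need $W(B_\alpha)\subset\cS_\alpha$ so that \cref{thm:main} applies. Directly, $W(B_\alpha)$ is the closed disk of radius $s$ centred at $1$. The distance from $1$ to either boundary ray of $\cS_\alpha$ is $\sin\alpha$, so this disk lies in $\cS_\alpha$.

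Set $g=f_\alpha\circ\Phi$. Then $g$ is holomorphic in the interior of $\cL_\alpha$ and continuous on $\cL_\alpha$, with $g(1)=f_\alpha(\infty)=1$, and $|g|\le1$ there. By the composition rule for the holomorphic functional calculus, $g(A_\alpha)=f_\alpha(B_\alpha)$. The equality case of \cref{thm:main} then gives
\[
  \norm{g(A_\alpha)}=\frac{\pi s}{2\alpha}.
\]

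\emph{Step 3 (polynomial approximation).} The complement of $\cL_\alpha$ is connected, so Mergelyan's theorem gives polynomials $p_n$ with $\max_{\cL_\alpha}|p_n-g|\le1/n$. Hence $\max_{\cL_\alpha}|p_n|\le1+1/n$.

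Since $A_\alpha^2=0$, we have $p_n(A_\alpha)=p_n(0)I+p_n'(0)A_\alpha$. The point $0$ is interior to $\cL_\alpha$, so Cauchy estimates on a small disk around $0$ give $p_n(0)\to g(0)$ and $p_n'(0)\to g'(0)$. Therefore
\[
  p_n(A_\alpha)\to g(0)I+g'(0)A_\alpha=g(A_\alpha),
\]
and so
\[
  \frac{\norm{p_n(A_\alpha)}}{\max_{\cL_\alpha}|p_n|}\ \ge\ \frac{\norm{p_n(A_\alpha)}}{1+1/n}\ \longrightarrow\ \frac{\pi\sin\alpha}{2\alpha}.
\]
This yields \eqref{eq:globallower}.

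The only delicate point is the passage from $g$ to polynomials, because $g$ is merely continuous at the vertex $1$ (the image of $\infty$). The nilpotency of $A_\alpha$ sidesteps this entirely: only interior Taylor data at $0$ enter, so no a priori $K$-spectral bound is needed.
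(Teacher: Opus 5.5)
Your proposal is correct and follows essentially the same route as the paper. Both arguments use the exact $2\times2$ norm formula for the two disk constraints, the identity $\Phi(A_\alpha)=B_\alpha$, Mergelyan approximation of $g_\alpha=f_\alpha\circ\Phi$, and Cauchy estimates at the sole eigenvalue $0$ to get $p_n(A_\alpha)\to g_\alpha(A_\alpha)$. The only cosmetic difference is that the paper rescales the approximants to have supremum exactly one on $\cL_\alpha$, whereas you keep the bound $1+1/n$.
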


At the right angle we can also treat a nonlinear two-zero family for every
admissible operator, rather than restricting the operator to a Jordan model.

\begin{theorem}[Palindromic quadratic family]\label{thm:twozero}
Let $T\in\B(\Hh)$ satisfy $W(T)\subset\cS_{\pi/4}$.  For every $u>0$,
\begin{equation}\label{eq:twozero}
 \norm{
   (T^4-uT^2+I)(T^4+uT^2+I)^{-1}}
 \le \sqrt2.
\end{equation}
Consequently the conjectural right-angle constant holds for every
degree-two Blaschke product on the right half-plane whose zeros are either
both positive real or form a conjugate pair.  The estimate is valid in
arbitrary Hilbert-space dimension.
\end{theorem}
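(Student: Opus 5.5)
The plan is to exploit the palindromic structure $z^4\pm uz^2+1=z^2\bigl(z^2+z^{-2}\pm u\bigr)$, which reduces the degree-two problem to a degree-one (Möbius-type) problem in a single new operator. There are three steps.

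\textbf{Step 1: the operator $K$.} Since $W(T)\subset\cS_{\pi/4}$ and $0$ is excluded from the interior, $T$ is invertible; if $0$ lies on the boundary I will first perturb $T\mapsto T+\varepsilon I$ and pass to the limit. For $x\neq0$ put $y=Tx$. Then $\langle T^{-1}y,y\rangle=\overline{\langle Tx,x\rangle}$, so $W(T^{-1})\subset\overline{\cS_{\pi/4}}=\cS_{\pi/4}$. The sector is a convex cone, hence $K:=T+T^{-1}$ satisfies $W(K)\subset\cS_{\pi/4}$.

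\textbf{Step 2: reduction to a function of $K$.} Because $T$ and $T^{-1}$ commute, $T^2+T^{-2}=K^2-2I$. Cancelling the common factor $T^2$ in the numerator and denominator of \eqref{eq:twozero} gives
\[
(T^4-uT^2+I)(T^4+uT^2+I)^{-1}=F(K),\qquad F(k)=\frac{k^2-(2+u)}{k^2-(2-u)}.
\]
On the scalar level the map $z\mapsto z^2+z^{-2}$ sends $\cS_{\pi/4}$ into the closed right half-plane. To see this, write $z=re^{i\theta}$; the real part is $(r^2+r^{-2})\cos2\theta\ge0$. Consequently $|F|\le1$ on $\cS_{\pi/4}$, so $F$ is an admissible test function in the right-angle sector problem for $K$. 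Moreover, $F$ factors as a product of two disk automorphisms composed with $k\mapsto k^2$, whose zeros are $\pm\sqrt{2+u}$.

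\textbf{Step 3: the $\sqrt2$ estimate for $F(K)$.} This is the main obstacle. The difficulty is that $K^2$ need not be accretive even though $W(K)\subset\cS_{\pi/4}$; otherwise the Cayley transform would already give norm $\le1$, which is too strong. My first attempt is a Crouzeix--Palencia type argument on the boundary of $\cS_{\pi/4}$. I will write $F(K)+G(K)^*$ as the double-layer (Cauchy--Poisson) integral over the two rays $\arg k=\pm\pi/4$, where $G$ is the Cauchy transform of $\overline{F}$. The hypothesis $W(K)\subset\cS_{\pi/4}$ makes the double-layer kernel operator positive with total mass $1$, which yields $\|F(K)+G(K)^*\|\le1$. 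Because the two boundary rays are orthogonal, $G$ can be computed explicitly for this even rational $F$; on the real axis $F$ is real, which makes $G$ essentially a reflected copy of $F$.

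The bound $\|F(K)\|\le\sqrt2$ should then follow from the squared-norm form of the Crouzeix--Palencia trick. One shows $\|G(K)\|\le\|F(K)\|$ via the symmetry of the quarter-plane reflection, and combines this with an identity of the type $F(K)^*F(K)\le I+\operatorname{Re}\bigl(\text{positive kernel term}\bigr)$. Together these should give $\|F(K)\|^2\le2$.

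If that route stalls, I will fall back on an explicit dilation. Rotating by $e^{\pm i\pi/4}$ makes $A_\pm=e^{\pm i\pi/4}K$ accretive. Expressing $F$ through the Cayley transforms of $A_+$ and $A_-$ reduces the problem to a two-variable contraction problem, where Ando's theorem can be applied. The $\sqrt2$ then arises as the norm of the fixed $2\times2$ coupling between the two coordinates.

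For the final claim, the zeros of the Blaschke factor $(z^2-a^2)/(z^2+a^2)$-type products on the right half-plane correspond to zeros of $F$. These are positive real when $u\ge2$ and form a conjugate pair when $0<u<2$, after normalising by a dilation of $T$, which preserves $\cS_{\pi/4}$. Hence every such degree-two Blaschke product is covered. None of the steps uses finite dimension.
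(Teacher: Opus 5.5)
Your reduction fails at the sentence ``Consequently $|F|\le1$ on $\cS_{\pi/4}$.'' What you showed is $\Re(z^2+z^{-2})\ge0$ for $z\in\cS_{\pi/4}$, that is, $|F(z+z^{-1})|\le1$. This bounds $F$ only on the image of the sector under $z\mapsto z+z^{-1}$, namely the region $\{k:\Re k>0,\ \Re(k^2)\ge2\}$ to the right of the hyperbola $x^2-y^2=2$, which is a proper subset of $\cS_{\pi/4}$. On the sector itself $F$ is never bounded by $1$. For $u\ne2$ one has $|F(0)|=(2+u)/|2-u|>1$, and for $0<u\le2$ the function $F$ has a pole at $k=\sqrt{2-u}\in\cS_{\pi/4}$. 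The only information you retain about $K$ is $W(K)\subset\cS_{\pi/4}$. The scalar operators $K=kI$ with $k\to\sqrt{2-u}$ satisfy this hypothesis, yet $\norm{F(K)}\to\infty$. So once you pass from $T$ to $K$, there is no true inequality left to prove. The estimate depends on $K$ having the form $T+T^{-1}$. To exploit that, you would have to locate $W(K)$ relative to the hyperbolic region and then prove a $\sqrt2$ spectral estimate there; the proposal does neither.

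Step 3 would not close the argument even for an admissible test function. The Crouzeix--Palencia double-layer argument gives the constant $1+\sqrt2$. A ``squared-norm'' refinement yielding $\sqrt2$ for general bounded functions on the quarter plane would settle the open right-angle conjecture itself. The ingredients you sketch (the bound $\norm{G(K)}\le\norm{F(K)}$ by reflection, and an inequality $F(K)^*F(K)\le I+\Re(\cdots)$) are not established. Your Ando fallback likewise presupposes a bidisk extension of the test function along the coupling curve with supremum norm at most $\sqrt2$. Constructing such an extension is the whole difficulty: the paper does it only for $w^2$, in \cref{prop:andocentral}, with a specific polynomial. The phrase ``$\sqrt2$ as the norm of a fixed $2\times2$ coupling'' does not provide one.

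What is missing is a dimension-free positivity certificate in the original variable. With $Z=T^2$, $D=Z^2+uZ+I$ and $N=Z^2-uZ+I$, the paper writes $2D^*D-N^*N$ as a hereditary sum of squares plus squares weighted by the half-plane defects $P=X+Y\ge0$ and $Q=X-Y\ge0$. For $u\ge2$ this is the explicit five-term identity of \cref{lem:selector}; for $0<u\le2$ it is the exact Markov--Lukacs matrix certificate of \cref{lem:intervalcert}. Conjugating by $D^{-1}$ then gives $2I-(ND^{-1})^*(ND^{-1})\ge0$. Your final dilation argument for the Blaschke-product consequence is fine.
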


The palindromic restriction is not forced by the method.  The following
one-parameter family crosses the full imaginary diameter of the normalized
post-automorphism disk and lies outside it except at the centre.

\begin{theorem}[Imaginary post-automorphism diameter]\label{thm:imagdiam}
Let $T\in\B(\Hh)$ satisfy $W(T)\subset\cS_{\pi/4}$.  For every
$t\in[-1,1]$,
\begin{equation}\label{eq:imagdiam}
 \norm{\bigl((T^2-I)^2-it(T^2+I)^2\bigr)
       \bigl((T^2+I)^2+it(T^2-I)^2\bigr)^{-1}}
 \le\sqrt2,
\end{equation}
where at $t=\pm1$ the common polynomial factor is cancelled before
evaluation.  Equivalently, the estimate holds on the full
diameter $c=it$, $|t|\le1$, for
\begin{equation}\label{eq:hc}
              h_c(w)=\frac{w^2+\bar c}{1+cw^2}.
\end{equation}
The estimate is valid in arbitrary Hilbert-space dimension.
\end{theorem}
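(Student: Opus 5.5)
Throughout, write $S=T^2$ and use the disk coordinate from the Möbius reduction: the map $z\mapsto (z^2-1)/(z^2+1)$ sends $\cS_{\pi/4}$ onto the closed unit disk, and we set $W=(S-I)(S+I)^{-1}$, which is defined because $-1\notin\Sp(S)$. Dividing numerator and denominator in \eqref{eq:imagdiam} on the right by $(S+I)^2$ turns the operator into $h_{it}(W)$ with $h_c$ as in \eqref{eq:hc}. This gives the stated equivalence.

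\emph{Step 1: reductions.} I would cut the parameter range down in three ways.
\begin{enumerate}
\item \emph{Endpoints.} At $t=\pm1$ one has $w^2\mp i=\mp i(1\pm iw^2)$, so $h_{\pm i}\equiv\mp i$ after cancelling the common factor. The norm is then $1$.
\item \emph{Symmetry.} Replace $T$ by its entrywise complex conjugate $\overline T$ with respect to a fixed orthonormal basis. This preserves the condition $W(T)\subset\cS_{\pi/4}$, because the sector is symmetric under conjugation, and it preserves norms. It sends $h_{it}(W)$ to $\overline{h_{-it}}(\overline W)$. Hence it suffices to treat $t\in[0,1)$.
\item \emph{Centre.} At $t=0$ the claim is $\norm{W^2}\le\sqrt2$. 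I take this as known from the central-square argument via the bidisk extension and Ando's theorem, which gives the stronger bound $\kappa_0<\sqrt2$.
\end{enumerate}
I would also record the palindromic comparison for orientation. The substitution $S=(I+W)(I-W)^{-1}$ shows that \cref{thm:twozero} is exactly the real diameter $c=r\in(-1,1)$, with $r=(2-u)/(2+u)$. The present theorem is the orthogonal diameter.

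The rotation $w\mapsto e^{i\pi/4}w$ would convert $h_{it}$ into $i\,h_{-t}$. However, it does not preserve the lens (the vertices $\pm1$ move), so the imaginary diameter cannot simply be transported from the real one. This is why a direct argument is needed.

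\emph{Step 2: operator positivity certificate.} Put
\[
N_t=(S-I)^2-it(S+I)^2,\qquad D_t=(S+I)^2+it(S-I)^2 .
\]
The claim is equivalent to the operator inequality $2D_t^*D_t-N_t^*N_t\ge0$. The hypothesis is equivalent to
\[
e^{i\theta}T+e^{-i\theta}T^*\ge0\qquad\text{for all }|\theta|\le\pi/4 .
\]
In particular it holds for $\theta=0$ and $\theta=\pm\pi/4$, and it implies the equivalent form $\norm{W}\le1$ after conjugation. I would seek an explicit identity
\[
2D_t^*D_t-N_t^*N_t=\sum_k X_k^*\bigl(I-W^*W\bigr)X_k+\sum_j Y_j^*\bigl(e^{i\theta_j}T+e^{-i\theta_j}T^*\bigr)Y_j+\sum_l Z_l^*Z_l .
\]
Here $X_k,Y_j,Z_l$ are polynomials (or rational functions with denominators $(S+I)^m$) in $T$, and their coefficients are rational in $t$. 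The certificate would be the one used at $t=0$, deformed along the diameter, as the abstract's exact rational verifiers suggest. Each term is manifestly positive, and this yields the bound in arbitrary dimension.

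\emph{Step 3: making the certificate uniform in $t$.} This is the main obstacle. Near $t=1$ the bound is nearly trivial, since $h_{it}$ degenerates to a unimodular constant. Near $t=0$ the margin $\sqrt2-\kappa_0>0$ is available. The difficulty is an intermediate $t$ at which the Gram matrix of the certificate loses rank.

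My first attempt would be to parametrize the Gram matrices by a polynomial in $t$ and check positive semidefiniteness on $[0,1]$ via Sturm sequences or a Bernstein expansion with exact rational arithmetic. If a single family fails, I would split $[0,1]$ into finitely many subintervals, each with its own rational certificate. I would close the gap near $t=1$ by factoring $(1-t)$ out of $2D_t^*D_t-N_t^*N_t$, using that at $t=1$ this difference equals $D_1^*D_1$.
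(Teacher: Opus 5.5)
Your outer frame matches the paper's: reduce to $0\le t<1$ by a conjugation symmetry, dispose of $t=1$ by $N_1=-iD_1$, and prove $2D_t^*D_t-N_t^*N_t\ge0$ by a hereditary certificate followed by congruence with $D_t^{-1}$. Your entrywise conjugation works as well as the paper's passage to $T^*$ via $h_c(T^2)^*=h_{\bar c}((T^*)^2)$. However, Step~2 rests on a false positivity. The hypothesis $W(T)\subset\cS_{\pi/4}$ does \emph{not} give $\norm{W}\le1$ for your operator $W=(T^2-I)(T^2+I)^{-1}$.

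Take $T=B_{\pi/4}=I+\sqrt2J$ from \eqref{eq:extsector}, where $J$ is the $2\times2$ nilpotent Jordan block. Its numerical range is the disk of radius $1/\sqrt2$ about $1$, tangent to both rays. Yet $T^2=I+2\sqrt2J$ gives $W=\sqrt2J$, and then $I-W^*W=\operatorname{diag}(1,-1)$. If $\norm{W}\le1$ held, von Neumann's inequality would give $\norm{h_{it}(W)}\le1$ at once, and the $\sqrt2$ bound would be trivial. What survives is only $\Sp(W)\subset\overline{\mathbb D}$, and that is what makes $D_t$ invertible for $t<1$. So the terms $X_k^*(I-W^*W)X_k$ must be removed from your ansatz. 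The admissible positive ingredients are squares and the two forms $P=X+Y$, $Q=X-Y$ (your $\theta=\pm\pi/4$ terms), exactly as in \cref{lem:imagcert}.

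The more serious gap is that Steps~2--3 describe a search, not a proof. The whole content of the theorem is the existence of a certificate valid uniformly on $[0,1)$, and you neither exhibit one nor give a reason one exists.

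There is also nothing to ``deform'' at $t=0$. The bound you cite there (\cref{prop:andocentral}) comes from a bidisk extension and Ando's theorem, not from a hereditary sum of squares. Nor is $t\to1$ the easy end: the paper's certificate is most degenerate there, with ordinary Gram $\bar{\mathbf d}_1\mathbf d_1^\mathsf T$ of rank one and localized Grams zero.

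The paper supplies the missing object in \cref{lem:imagcert}. It is a degree-four identity in $\mathbf v(T)=(I,\dots,T^4)^\mathsf T$ and $\mathbf w(T)=(I,\dots,T^3)^\mathsf T$. Its Gram matrices are Markov--Lukacs forms $V_2(t)^*SV_2(t)+t(1-t)V_1(t)^*RV_1(t)$ with \emph{constant} positive semidefinite $S,R$ over $\mathbb Q(i,a)$, $a=\frac12\sqrt{1+\sqrt2}$. In addition $G_Q=J_4G_PJ_4$, and the kernels are prescribed with irrational algebraic entries.

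That structure makes positivity on all of $[0,1]$ automatic, with no subdivision or Sturm sequences. The real work is solving the exact affine system under those kernel constraints and certifying the four constant matrices positive. Without such a certificate, your argument establishes only the endpoints $t=\pm1$ and the centre $t=0$.
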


The exact quotient-space margin in the preceding certificate also gives a
two-real-dimensional region reaching both degenerate endpoints.

\begin{theorem}[A nonsymmetric cusp]\label{thm:cusp}
Let $T\in\B(\Hh)$ satisfy $W(T)\subset\cS_{\pi/4}$.  If $c\in\C$ satisfies
\begin{equation}\label{eq:cusp}
 |\Im c|\le1,
 \qquad
 |\Re c|\le\frac{(1-|\Im c|)^2}{200},
\end{equation}
then
\begin{equation}\label{eq:cuspbound}
 \norm{\bigl((T^2-I)^2+\bar c(T^2+I)^2\bigr)
       \bigl((T^2+I)^2+c(T^2-I)^2\bigr)^{-1}}
 \le\sqrt2.
\end{equation}
At the tips $c=\pm i$ the quotient is understood after cancellation.  The
estimate is valid in arbitrary Hilbert-space dimension.
\end{theorem}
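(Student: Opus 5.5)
The plan is to reuse the certificate behind \cref{thm:imagdiam} and treat $\Re c$ as a perturbation absorbed by the exact margin of that certificate. Write $c=s+it$ with $s=\Re c$, $t=\Im c$.

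\textbf{Step 1: reduction to a Hermitian square.} Put $w=(T^2-I)(T^2+I)^{-1}$. The hypothesis $W(T)\subset\cS_{\pi/4}$ makes $T^2$ accretive, so $w$ is a well-defined contraction with additional structure inherited from the sector. The operator in \eqref{eq:cuspbound} is then $h_c(w)$ from \eqref{eq:hc}. Since $1+cw^2$ is invertible for $|c|<1$, the bound $\norm{h_c(w)}\le\sqrt2$ is equivalent to
\[
 Q_c:=2\,(I+cw^2)^*(I+cw^2)-(w^2+\bar c I)^*(w^2+\bar c I)\succeq0 .
\]
The tips $c=\pm i$ follow by continuity once the common factor has been cancelled. $Q_c$ is a Hermitian polynomial in $w,w^*$ that is affine in $s$ and quadratic in $(s,t)$:
\[
 Q_{s+it}=Q_{it}+s\,L_t+s^2M ,
\]
where $L_t$ and $M$ are explicit Hermitian expressions in $w^2$ and $w^{*2}$. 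For example, $M=2w^{*2}w^2-I$.

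\textbf{Step 2: the diameter certificate and its margin.} I take from the proof of \cref{thm:imagdiam} its exact form. It expresses $Q_{it}$ as a sum of Hermitian squares plus nonnegative multiples of the admissibility forms $\Re(e^{\pm i\pi/4}T)\succeq0$, transported to the $w$-variable. I then extract a strict remainder. I expect that certificate to be tight only along a fixed "quotient" subspace of the monomial space, namely the directions corresponding to the degenerate endpoints $c=\pm i$ and to the extremal model. On the complementary quotient, the Gram matrix of the certificate should be bounded below by an explicit multiple of $(1-|t|)^2$. Concretely, I would prove a rational inequality of the form
\[
 Q_{it}\succeq \frac{(1-|t|)^2}{K}\,R ,
\]
where $R$ is a fixed positive combination of the quotient squares and $K$ is a small rational constant. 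The estimate should be verified by an exact Cholesky or $LDL^{*}$ factorization in $t$, in the same spirit as the paper's other rational verifiers.

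\textbf{Step 3: absorbing the real part.} Next I show that $L_t$ and $M$ vanish on the tight directions, i.e.\ they live in the quotient. This must be checked from the explicit polynomials, and it is the reason the region pinches to cusps at $\pm i$. On that quotient I show $\pm L_t\preceq \beta R$ and $\pm M\preceq \beta R$ with an explicit $\beta$, uniformly for $|t|\le1$. Then for $|s|\le(1-|t|)^2/200$, and hence $|s|\le1/200$,
\[
 Q_{s+it}\succeq\Bigl(\tfrac{(1-|t|)^2}{K}-\beta|s|-\beta s^2\Bigr)R\succeq0 ,
\]
provided $\beta(1+1/200)\,\cdot\,1/200\le 1/K$. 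The constant $200$ is exactly what this bookkeeping should produce.

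\textbf{Main obstacle.} The hard step is Step 2 together with the compatibility in Step 3. The margin of the diameter certificate must be quantified exactly as $(1-|t|)^2$, not merely shown positive. In addition, the perturbation $L_t$ must be shown not to see the tight directions; otherwise no region of positive width could exist near the tips. My first attempt is to parametrize the certificate's Gram matrix by $t$, compute its Schur complement on the quotient symbolically, and factor out $(1-t^2)^2$. What remains is a polynomial inequality in $t$ on $[0,1]$, which I would close by a Bernstein-coefficient or Sturm-sequence check.
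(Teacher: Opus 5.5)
Your overall route is the paper's. Keep the two localized Grams of \cref{lem:imagcert} unchanged. Note that at $c=x+it$ the coefficient matrix of $2D_c^*D_c-N_c^*N_c$ differs from the one at $c=it$ by $x(\bar{\mathbf d}\mathbf n^{\mathsf T}+\bar{\mathbf n}\mathbf d^{\mathsf T})+x^2(2\bar{\mathbf n}\mathbf n^{\mathsf T}-\bar{\mathbf d}\mathbf d^{\mathsf T})$; this is your $sL+s^2M$ after congruence by $(T^2+I)^2$. Then absorb it into the ordinary Gram $G_0(t)$ using a margin of order $(1-t)^2$. Three points need correcting, and one of them makes Step 3 fail as you state it.

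\textbf{The operator $w$ is not a contraction, and $T^2$ need not be accretive.} The paper's own extremal $B_{\pi/4}=I+\sqrt2J$ has $W(B_{\pi/4})\subset\cS_{\pi/4}$. Yet $W(B_{\pi/4}^2)$ is the disk of radius $\sqrt2$ about $1$, and $w=\sqrt2J$ has norm $\sqrt2$. If $w$ were a contraction, von Neumann's inequality would give $\norm{h_c(w)}\le1$ and there would be nothing to prove. Invertibility of $I+cw^2$ must come from spectral inclusion: $\Sp(T^2)\subset\{\Re z\ge0\}$, hence $\Sp(w)\subset\overline{\mathbb D}$. The tips also need no continuity argument, and none is available: at $w^2=i$ one has $h_{it}=i$ for every $t<1$. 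At the tips the cusp condition forces $\Re c=0$, and the cancelled quotient is the constant $\mp i$.

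\textbf{The ``tight directions'' premise is mistaken.} The ordinary Gram has no kernel for $t<1$. The paper proves $G_0(t)\succeq\frac1{15}(1-t)^2I_5$ on all of $\mathbb C^5$. So you may take $R=\mathbf v(T)^*\mathbf v(T)$, and no vanishing of $L$ or $M$ is needed. Suppose instead that $R$ vanishes on the directions degenerate at the endpoint, $E=\ker\mathbf d_1^{\mathsf T}$ with $\mathbf d_1=\mathbf d+i\mathbf n$. Then $\pm L\preceq\beta R$ would require $E\subset\ker L$, and that is false. For $q\in E$,
\[
(\bar{\mathbf d}\mathbf n^{\mathsf T}+\bar{\mathbf n}\mathbf d^{\mathsf T})q=(\mathbf n^{\mathsf T}q)\,\bar{\mathbf d}_1,
\]
which is nonzero for $q=(2i,0,1,0,0)^{\mathsf T}$. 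Only the quadratic form of $L$ vanishes on $E$. The cusp does not come from the perturbation avoiding a kernel. It comes from the factor $(1-t)^2$ itself, that is, from the degeneration $G_0(1)=\bar{\mathbf d}_1\mathbf d_1^{\mathsf T}$.

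\textbf{The margin needs no new Schur-complement or Bernstein computation.} It follows in two lines from data already certified in \cref{lem:imagcert}:
\begin{itemize}
\item the margin $S_0\succeq\frac1{10}(I-\Pi_0)$;
\item the explicit kernel $\ker S_0=\{(q,q,q):q\in E\}$, which gives $\operatorname{dist}\bigl((\xi,t\xi,t^2\xi),\ker S_0\bigr)^2\ge\frac23(1-t)^2\norm{\xi}^2$;
\item $R_0\succeq0$, so the second Markov--Lukacs term can be discarded.
\end{itemize}
For the perturbation, $\norm{\mathbf d}=\norm{\mathbf n}=\sqrt6$ bounds the linear and quadratic terms by $12|x|$ and $18x^2$. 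Since $\frac1{15}-\frac{12}{200}-\frac{18}{200^2}>0$, the perturbed ordinary Gram remains positive whenever $|x|\le(1-t)^2/200$.
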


The next result fills a full two-real-dimensional disk across the centre of
the normalized parameter space.

\begin{theorem}[A central complex post-automorphism disk]
\label{thm:centraldisk}
Let $T\in\B(\Hh)$ satisfy $W(T)\subset\cS_{\pi/4}$, and let
\begin{equation}\label{eq:cdisk}
                              |c|\le\frac{19}{20}.
\end{equation}
Then
\begin{equation}\label{eq:centraldisk}
 \norm{\bigl((T^2-I)^2+\bar c(T^2+I)^2\bigr)
       \bigl((T^2+I)^2+c(T^2-I)^2\bigr)^{-1}}
 \le\sqrt2.
\end{equation}
The rational functions in \eqref{eq:centraldisk} are degree-two Blaschke
products on the right half-plane.  For example, at $c=i/2$ their zeros are
\begin{equation}\label{eq:nonsymzeros}
                   1+2i,\qquad \frac{1-2i}{5},
\end{equation}
so the theorem contains genuinely nonsymmetric zero pairs and strictly
extends the real and imaginary diameters to an open two-dimensional family.
The estimate is valid in arbitrary Hilbert-space dimension.
\end{theorem}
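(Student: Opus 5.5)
Set $w=\phi(T)=(T^2-I)(T^2+I)^{-1}$. Since $W(T)\subset\cS_{\pi/4}$, we have $W(T^2)$ contained in the closed right half-plane; this is the right-angle reduction used throughout the paper. Hence $w$ is the Cayley image of an accretive-type operator, and $w$ is a contraction. The operator in \eqref{eq:centraldisk} is then exactly $h_c(w)=(w^2+\bar c)(I+cw^2)^{-1}$, with $h_c$ as in \eqref{eq:hc}.

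In the disk picture, $h_c=m_c\circ(w\mapsto w^2)$, where
\[
m_c(\zeta)=\frac{\zeta+\bar c}{1+c\zeta}
\]
is a disk automorphism for $|c|<1$. The plan is therefore to prove $\norm{m_c(S)}\le\sqrt2$ for $S=w^2$, using the structure that $S$ inherits from $T$. By von Neumann we only know $\norm{S}\le1$, which gives $\norm{m_c(S)}\le1$ trivially. That cannot be the right model, however: $w$ is a contraction, but $f(w)$ is not controlled by $\norm{f}_\infty$ on the disk. The relevant constraint is that $T$ lives in the sector, so $w$ ranges over the lens-image class. Accordingly I will reuse the certificate architecture of \cref{thm:imagdiam} and \cref{thm:cusp}, which treats the diameter $c=it$ and the real diameter via \cref{thm:twozero}.

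The key step is a positivity certificate. I want to write
\[
2I-h_c(w)^*h_c(w)
\]
after multiplying on both sides by $(I+cw^2)^*$ and $(I+cw^2)$. The target is a sum of hermitian squares plus nonnegative multiples of the defining positive forms $\Re(e^{\pm i\pi/4}T)\ge0$, expressed through $w$. Concretely, the Cayley-transformed condition is that $I-w^*w\ge0$ and that $I-\tilde w^*\tilde w\ge0$ for the rotated transforms. I will seek a Gram-matrix identity
\[
2(I+cw^2)^*(I+cw^2)-(w^2+\bar c)^*(w^2+\bar c)=\sum_k G_k(c)\text{-weighted quadratic forms in }(I,w,w^2,\dots)\ \text{involving the constraint operators},
\]
with Gram matrices $G_k(c)\succeq0$ whose entries are polynomial in $(\Re c,\Im c)$.

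The verification step is to pick an exact rational certificate on a finite grid of centres $c_j$ covering $|c|\le19/20$, each with a strictly positive-definite margin. Positive definiteness persists on a small disk around each $c_j$ by an explicit perturbation bound: the Gram entries are affine or quadratic in $c$, so a rational Lipschitz estimate against the minimal eigenvalue suffices. The margin should be checked by exact $LDL^*$ factorization over $\mathbb Q$. Alternatively, the rotational symmetry $T\mapsto$ conjugate and $c\mapsto\bar c$, together with the real-diameter result, reduces the region to a half-disk. Finally, the zeros in \eqref{eq:nonsymzeros} follow by solving $z^4$-type equations: $(z^2-1)^2+\bar c(z^2+1)^2=0$ at $c=i/2$ gives $z^2$, and taking square roots in the right half-plane yields $1+2i$ and $(1-2i)/5$ after simplification.

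The main obstacle is the certificate near $|c|=19/20$. The margin degenerates there as $c$ approaches the boundary circle, where the Blaschke product drops degree, exactly as in the cusp of \cref{thm:cusp}. I would first try a single parametrized Gram matrix that is quadratic in $c$ with a uniform margin proportional to $1-|c|^2$. I would fall back to a finer covering in the outer annulus only if that fails.
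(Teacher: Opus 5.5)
Your reduction starts from a false claim, and the error matters. The hypothesis $W(T)\subset\cS_{\pi/4}$ does not put $W(T^2)$ in the right half-plane, and $w=(T^2-I)(T^2+I)^{-1}$ need not be a contraction.

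\begin{itemize}
\item Take $T=I+\sqrt2J$ with $J=\left(\begin{smallmatrix}0&1\\0&0\end{smallmatrix}\right)$, the extremal of \cref{thm:main} at $\alpha=\pi/4$.
\item $W(T)$ is the disk of radius $\sqrt2/2$ about $1$, tangent to both rays.
\item But $T^2=I+2\sqrt2J$ has numerical range containing $1-\sqrt2<0$, and $w=\sqrt2J$ has norm $\sqrt2$.
\end{itemize}

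If $w$ were a contraction, von Neumann's inequality would give $\norm{h_c(w)}\le1$ immediately. Your sentence ``$w$ is a contraction, but $f(w)$ is not controlled by $\norm{f}_\infty$'' contradicts that inequality.

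What survives is only spectral: $\Sp(T^2)\subset\{\Re z\ge0\}$, hence $\Sp(w)\subset\overline{\mathbb D}$. This makes $T^2+I$ and $I+cw^2$ invertible for $|c|<1$ but says nothing about norms. So $I-w^*w\ge0$ cannot be used as a constraint in your certificate.

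The only positivity available is $P=X+Y\ge0$ and $Q=X-Y\ge0$ (equivalently, your rotated Cayley contractions). These live in $T$, not in $w$, which is a function of $T^2$. The certificate therefore has to be a hereditary identity in powers of $T$ after clearing $(T^2+I)^2$:
\[
 2D_c^*D_c-N_c^*N_c=\mathbf v(T)^*G_0\mathbf v(T)+\mathbf w(T)^*G_PP\,\mathbf w(T)+\mathbf w(T)^*G_QQ\,\mathbf w(T),
\]
with $D_c=(T^2+I)^2+c(T^2-I)^2$ and $N_c=(T^2-I)^2+\bar c(T^2+I)^2$. Congruence with $D_c^{-1}$ then finishes the argument.

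Even with the constraints corrected, the proposal stops before the object that actually carries the proof: Gram matrices certified positive for every $|c|\le19/20$ at once. The paper obtains them from one bivariate matrix Putinar certificate,
\[
 G_0(x,y)=\mathcal V_{3,5}^*S_0\mathcal V_{3,5}+\Bigl(\frac{361}{400}-x^2-y^2\Bigr)\mathcal V_{2,5}^*R_0\mathcal V_{2,5},
\]
likewise for $G_P$, with $G_Q(x,y)=\overline{G_P(x,-y)}$. Here $S_0,R_0,S_P,R_P$ are constant positive definite rational matrices of orders $50,30,40,24$, and the Gram entries have degree six in $(x,y)$. The multiplier $361/400-|c|^2$ makes positivity on the closed disk automatic. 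The exact work is then only matching $28$ parameter-monomial coefficient matrices and proving four rational eigenvalue lower bounds.

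Your ``single Gram quadratic in $c$'' comes with no mechanism for certifying positivity on the disk, and that is exactly what the Putinar multiplier supplies. Your grid-plus-Lipschitz fallback is logically admissible: you can absorb $H(c)-H(c_j)$ into the ordinary Gram, as in the proof of \cref{thm:cusp}. But the margins must tend to zero as $|c|\to1$. There $N_c=\bar cD_c$, so the target collapses to $D_c^*D_c$, which vanishes at a scalar point on the boundary of the sector; hence $G_0$ must become singular. You give no margin estimate, so the covering near $|c|=19/20$ is unquantified.

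Finally, the zeros in \eqref{eq:nonsymzeros} are zeros in the half-plane variable $Z=T^2$. At $c=i/2$ the numerator $(Z-1)^2-\frac i2(Z+1)^2$ factors as $(1-\frac i2)\bigl(Z-(1+2i)\bigr)\bigl(Z-\frac{1-2i}5\bigr)$. Taking square roots, as you propose, would give $\pm\sqrt{1+2i}$, not $1+2i$.
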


The radius $19/20$ is not a barrier at every phase.  The following result
crosses it on a genuinely open set and reaches the boundary of the normalized
parameter disk.

\begin{theorem}[An almost-complete boundary-phase arc]\label{thm:phasewedge}
For $t>0$ put
\begin{equation}\label{eq:zetat}
 s_t=\frac{1-4t^4}{2t^2},\qquad
 \zeta_t=\frac{4-s_t^2+4is_t}{4+s_t^2}.
\end{equation}
Whenever
\begin{equation}\label{eq:phasewedge}
 0\le\rho\le1,\qquad \frac1{10}\le t\le\frac{18}{25},
 \qquad c=\rho\zeta_t,
\end{equation}
every $T\in\B(\Hh)$ with $W(T)\subset\cS_{\pi/4}$ satisfies
\begin{equation}\label{eq:phasewedgebound}
 \norm{\bigl((T^2-I)^2+\bar c(T^2+I)^2\bigr)
       \bigl((T^2+I)^2+c(T^2-I)^2\bigr)^{-1}}
 \le\sqrt2.
\end{equation}
The same assertion holds with $c$ replaced by $\bar c$.  Since
$s_t$ decreases from $2499/50$ at $t=1/10$ through $0$ at
$t=1/\sqrt2$, the two arcs cover the unit circle except for the arc centred
at $-1$ of angular length
\[
                 4\arctan\frac{100}{2499}
                    =0.159978\ldots\ \text{ radians}
                    =9.1661\ldots\ \text{ degrees}.
\]
Every radial segment below the covered arcs is admissible.  At $\rho=1$
the quotient is understood after cancellation of its common polynomial
factor.
\end{theorem}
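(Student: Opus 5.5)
We reduce to the normalized disk coordinate and write down an explicit positivity certificate along the parameter arc, following the pattern of \cref{thm:imagdiam,thm:centraldisk}. With $W(T)\subset\cS_{\pi/4}$ we have $W(T^2)$ in the closed right half-plane. The Cayley transform $w=(T^2-I)(T^2+I)^{-1}$ is therefore a contraction, with the appropriate regularized interpretation when $-1\in\Sp(T^2)$. Moreover $(T^2-I)^2+\bar c(T^2+I)^2=(T^2+I)^2(w^2+\bar c)$, so the operator in \eqref{eq:phasewedgebound} equals $h_c(w)$ from \eqref{eq:hc}. The hypothesis on $T$ says more than $\|w\|\le1$: the operator $w$ is the image of a quarter-sector operator, so it lies in the admissible class of the right-angle disk coordinate. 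This is the class used for \cref{thm:twozero,thm:imagdiam}, and it is what will let us beat the trivial constant.

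The target is the operator inequality
\[
2(I+cw^2)^*(I+cw^2)-(w^2+\bar c)^*(w^2+\bar c)\succeq0 .
\]
We prove it by writing the left side as a sum of hermitian squares plus nonnegative multiples of the defining positivity constraints of the admissible class, namely $\Re$ of the sector form for $T$ pulled back to $w$. The multipliers depend rationally on the parameters. This is where the specific parametrization enters. The substitution $s_t=(1-4t^4)/(2t^2)$ and $\zeta_t=(2+is_t)^2/(4+s_t^2)$, which has unit modulus, makes $\zeta_t$ a rational square of a phase. Rational multipliers in $t$ then stay rational, and the certificate becomes a polynomial identity in $(\rho,t)$.

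Linearity in $\rho$ handles the radial direction. For fixed $t$ the map $c\mapsto h_c(w)$ is not affine, but the Schur-complement form above is quadratic in $c$ with $\rho^2$ coefficient $2w^{2*}w^2-I$. We would instead certify the statement at $\rho=1$ and deduce the rest by a separate argument. One option is to treat $\rho$ as an extra variable and run a Bernstein/interval positivity check on $[0,1]\times[1/10,18/25]$. Another is to observe that $c\mapsto h_c$ is a disk automorphism composed with $w^2$, and use convexity of the Gram matrix positivity in the multiplier form. We would try the Bernstein route first, since the paper already uses exact rational verifiers.

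The symmetry $c\mapsto\bar c$ follows from replacing $T$ by $T^*$, which preserves $W\subset\cS_{\pi/4}$, together with taking adjoints: $h_{\bar c}(w^*)$ has the same norm as the conjugated function evaluated at $w$. Alternatively, complex conjugation of matrix entries gives it. The covering statement is elementary. $\arg\zeta_t=2\arctan(s_t/2)$ sweeps monotonically as $s_t$ goes from $2499/50$ to $0$. The conjugate arc mirrors it, and the endpoint gap is $4\arctan(100/2499)$.

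The main obstacle is finding the certificate multipliers uniformly down to $t=1/10$, where $\zeta_t$ approaches $-1$. There the admissible margin degenerates, as at the tips $c=\pm i$ in \cref{thm:cusp}. The endpoint $\rho=1$ also requires the cancellation limit. We expect to handle this by factoring out the common polynomial factor symbolically before the positivity check, so that the verifier works with a reduced quotient-space form.
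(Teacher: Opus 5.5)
Your reduction to the hereditary target $2D_c^*D_c-N_c^*N_c$, localized by $P=X+Y$ and $Q=X-Y$, is the right frame, and so are the rational form of $\zeta_t$, the $T\mapsto T^*$ symmetry and the arc count. The opening reduction, however, is false. $W(T)\subset\cS_{\pi/4}$ does not put $W(T^2)$ in the right half-plane, and $w$ need not be a contraction. Take $T=B_{\pi/4}=I+\sqrt2J$ with $J=\left(\begin{smallmatrix}0&1\\0&0\end{smallmatrix}\right)$. Then $W(T^2)$ is the disk of radius $\sqrt2$ about $1$, and $w=\sqrt2J$ has norm $\sqrt2$. If $\norm{w}\le1$ held, von Neumann's inequality would give $\norm{h_c(w)}\le1$ at once and the theorem would be empty. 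Only spectral inclusion survives, $\Sp(w)\subset\overline{\mathbb D}$, and that is all the paper uses, to invert $I+cw^2$ for $|c|<1$.

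The real gap is the radial direction, together with the certificate itself, which you never produce and which is the entire content of the theorem. Neither of your radial mechanisms works.

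\begin{itemize}
\item The case $\rho=1$ is vacuous. There $h_c\equiv\bar c$ and the target collapses to $D^*D$, so nothing propagates from it.
\item Convexity points the wrong way. For a unit vector $\xi$, the compressed target along a ray is $2-b+(2b-1)\rho^2+2\rho\Re(\zeta q)$, with $b=\norm{w^2\xi}^2$ and $q=\langle w^2\xi,\xi\rangle$. This is convex in $\rho$ once $b>1/2$, so positivity at $\rho=0$ and $\rho=1$ does not pass inward. That is exactly the nontrivial condition in \cref{prop:twomoment}. Convex combinations of Gram certificates only interpolate targets that are affine in the parameter.
\item A Bernstein check on a box containing $\rho=1$ cannot be strict. $D_{1,t}$ vanishes at $t(1-i)$ and at $(1+i)/(2t)$, which lie on the two boundary rays of $\cS_{\pi/4}$. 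Evaluating the identity there forces $G_0(1,t)$, $G_P(1,t)$ and $G_Q(1,t)$ to have nontrivial kernels. The corner controls at $\rho=1$ are these singular Grams, and the margin tends to $0$ as $\rho\to1$ for every $t$. This, not $t=1/10$, is the analogue of the cusp tips in \cref{thm:cusp}.
\end{itemize}

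Rounded rational data then cannot satisfy the identity and stay positive unless these forced faces are built into the ansatz. The paper does exactly this.

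\begin{itemize}
\item It uses \cref{thm:centraldisk} for $\rho\le19/20$.
\item On $[19/20,1]$ it takes $G_P=(1-\rho^2)\widetilde G_P$ and $G_Q=(1-\rho^2)\widetilde G_Q$, with $\widetilde G_P,\widetilde G_Q$ affine interpolations of positive endpoint Grams at $\rho=19/20$ and $\rho=1$. For $G_0$ it uses the odd Markov--Lukacs form $G_0=(\rho-R)\mathcal R_\rho^*S(t)\mathcal R_\rho+(1-\rho)\mathcal R_\rho^*U(t)\mathcal R_\rho$, where $R=19/20$ and $\mathcal R_\rho=\binom{I_5}{\rho I_5}$.
\item The supports of these Grams are known rationally in $t$, through $k_P(t)$, $e_t=\mathbf d+\zeta_t\mathbf n$ and $q_t$. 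Strict positivity is certified only for the reduced Grams on those supports.
\item It continues in $t$ through an exact rank-$72$ row-space identity over $\mathbb Q(t)$, checked by interpolation. On six subintervals it uses Bernstein sections, corrected by exact Neumann-series right inverses. So the final Grams are not one polynomial identity in $(\rho,t)$.
\end{itemize}

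Without identifying the forced faces and supplying a continuation mechanism of this kind, your plan is a search strategy rather than a proof.
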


At the centre $c=0$, the preceding bound is not sharp even within the
available argument.  A low-degree bidisk extension gives the following
strict improvement without any dimension restriction.

\begin{proposition}[A strict central-square bound]\label{prop:andocentral}
Let $T\in\B(\Hh)$ satisfy $W(T)\subset\cS_{\pi/4}$ and put
\[
                         w=(T^2-I)(T^2+I)^{-1}.
\]
Then
\begin{equation}\label{eq:andocentral}
 \norm{w^2}\le \kappa_0,
 \qquad
 \kappa_0:=\frac{276889}{127200}-\frac{6103\sqrt2}{10600}
             =1.3625601851\ldots <\sqrt2.
\end{equation}
The estimate holds in arbitrary Hilbert-space dimension.
\end{proposition}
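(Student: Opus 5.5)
The plan is to write $w^2$ as a two-variable holomorphic function of two \emph{commuting contractions} built from $T$, and then apply Ando's theorem. Since $W(T)\subset\cS_{\pi/4}$, both $e^{i\pi/4}T$ and $e^{-i\pi/4}T$ have numerical range in the closed right half-plane. To keep all inverses bounded, I first replace $T$ by $T+\varepsilon I$, which still satisfies the hypothesis, and let $\varepsilon\downarrow0$ at the end. The Cayley transforms
\[
 X=(e^{i\pi/4}T-I)(e^{i\pi/4}T+I)^{-1},\qquad
 Y=(e^{-i\pi/4}T-I)(e^{-i\pi/4}T+I)^{-1}
\]
are then commuting contractions, both being rational functions of $T$.

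Inverting the Cayley transforms gives $T=e^{-i\pi/4}(I+X)(I-X)^{-1}=e^{i\pi/4}(I+Y)(I-Y)^{-1}$. Hence
\[
 T^2=(I+X)(I+Y)\bigl((I-X)(I-Y)\bigr)^{-1},
\]
and a one-line computation yields
\[
 w=(X+Y)(I+XY)^{-1}.
\]
The function $F(x,y)=(x+y)/(1+xy)$ is not bounded on $\overline{\mathbb D}^2$: it has poles on the anti-diagonal torus arc $y=-\bar x$. However, $X$ and $Y$ are not independent. They satisfy the operator relation
\[
 R(X,Y):=(I+X)(I-Y)-i(I+Y)(I-X)=0 .
\]
The key step is therefore to find an explicit polynomial (or low-degree rational function with no poles in the closed bidisk) of the form
\[
 P(x,y)=F(x,y)^2+R(x,y)\,Q(x,y).
\]
For such a $P$, the operator $P(X,Y)$ equals $w^2$ because $R(X,Y)=0$. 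The aim is to choose $P$ so that $\sup_{\mathbb D^2}|P|=\kappa_0$. Ando's von Neumann inequality for two commuting contractions then gives $\norm{w^2}=\norm{P(X,Y)}\le\kappa_0$ in every dimension, and the bound survives the limit $\varepsilon\to0$.

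To construct $P$, I would impose symmetry under $x\leftrightarrow y$ combined with conjugation, which reflects $T\mapsto T^*$. I would then make a low-degree ansatz for the correction $Q$, with the multiplier chosen so that $P$ is polynomial: concretely, multiply through by powers of $(1+xy)$ and cancel using $R$. The few free real coefficients are fixed by minimizing the maximum of $|P|$ on the torus. The quadratic surd $\sqrt2$ in $\kappa_0$ suggests that the optimum is attained where a single parameter solves a quadratic equation, and I would choose the rational coefficients exactly so as to reproduce $\kappa_0$.

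The main obstacle is the \emph{exact certification} of $\max_{\mathbb T^2}|P|\le\kappa_0$; by maximum modulus it suffices to check the torus. Writing $x=e^{i\theta}$ and $y=e^{i\phi}$, the inequality $\kappa_0^2-|P|^2\ge0$ becomes a nonnegativity statement for a real trigonometric polynomial in two variables. I would first try to verify it by a Fej\'er--Riesz / sum-of-squares decomposition with rational Gram matrix, using the fact that $\kappa_0^2$ lies in $\mathbb Q(\sqrt2)$ so that all arithmetic stays exact. Should that fail, I would fall back to a Bernstein-form subdivision certificate on $[0,2\pi]^2$, in the spirit of the rational verifiers used elsewhere in the paper. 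The strict inequality $\kappa_0<\sqrt2$ is a trivial numerical check.
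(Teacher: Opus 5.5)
There is a genuine gap at your key step: no polynomial $P(x,y)$ in your Cayley variables can satisfy $P(X,Y)=w^2$. Solving $R=0$ gives the graph of a M\"obius map, $y=\mu(x):=(x-i)/(1-ix)$, and on this curve
\[
 \frac{x+\mu(x)}{1+x\mu(x)}=-i\,\frac{x^2+2ix+1}{x^2-2ix+1}.
\]
So $F^2$ restricted to the curve has a double pole at the finite point $(x_0,y_0)=\bigl(-i(\sqrt2-1),\,-i(\sqrt2+1)\bigr)$, where $1+x_0y_0=0\ne x_0+y_0$. This is the point corresponding to the scalar $t=-i$. Every polynomial is bounded near that point.

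On the other hand, an identity $P(X,Y)=w^2$ valid for all admissible $T$ must hold for every scalar $t$ in the sector. The corresponding $x$ fill the open upper half-disk, so by analytic continuation in $x$ one gets $P(x,\mu(x))=F(x,\mu(x))^2$ identically, a contradiction. Equivalently, $(x+y)^2$ is not divisible by $(1+xy)^2$ modulo $R$. Hence ``multiply through by powers of $1+xy$ and cancel using $R$'' cannot produce a polynomial.

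Your parenthetical fallback, a rational $P$ regular on the closed bidisk, is not excluded, since $|y_0|>1$. But for it you give no construction. There is also no basis for choosing coefficients ``to reproduce $\kappa_0$'': $\kappa_0$ is the exact supremum of one particular extension, not an intrinsic quantity you can aim at.

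The missing idea is to compose your two Cayley transforms with disk automorphisms. These should send the poles $t=i$ and $t=-i$ of $w$ to infinity in the coordinate where each lies off the closed disk: $t=i$ has $|x|=\sqrt2+1$, and $t=-i$ has $|y|=\sqrt2+1$. This gives the paper's commuting contractions
\[
 C=e^{-i\pi/4}(T-I)(T-iI)^{-1},\qquad D=e^{i\pi/4}(T-I)(T+iI)^{-1},
\]
which the paper writes as $C=v^{-1}-i\sqrt2\,I$ and $D=u^{-1}+i\sqrt2\,I$. In these coordinates $w=(C+D)/\sqrt2$ is linear, and the constraint becomes the bilinear relation $q(C,D)=0$ with $q(\zeta,\eta)=\zeta-\eta+i\sqrt2\,\zeta\eta$. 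The cubic $(\zeta+\eta)^2/2+q(\zeta,\eta)\bigl(51i/100+41(\eta-\zeta)/400\bigr)$ therefore evaluates to $w^2$ at $(C,D)$.

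Its torus maximum is computed exactly and elementarily. Put $\zeta=\chi e^{i\delta}$ and $\eta=\chi e^{-i\delta}$; for fixed $\sin\delta$, the squared modulus is a concave quadratic in $\Re\chi$. A one-variable analysis then gives exactly $\kappa_0$, with no sum-of-squares or subdivision certificate. Ando's theorem finishes the argument as you intended.
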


The strict reserve in \cref{prop:andocentral} controls two independently
moving Blaschke zeros, not merely the double zero at the centre.

\begin{theorem}[A two-small-zero disk]\label{thm:smallzeros}
Let $T\in\B(\Hh)$ satisfy $W(T)\subset\cS_{\pi/4}$ and put
\[
                 w=(T^2-I)(T^2+I)^{-1},\qquad
 b_\gamma(z)=\frac{z-\gamma}{1-\bar\gamma z}.
\]
If $\alpha,\beta\in\C$ satisfy
\[
                         |\alpha|,|\beta|\le\frac1{150},
\]
then
\begin{equation}\label{eq:smallzeros}
 \norm{b_\alpha(w)b_\beta(w)}
 \le
 \frac{\kappa_0+2\sqrt2/150+1/150^2}
      {1-2\sqrt2/150-\kappa_0/150^2}
 <\sqrt2.
\end{equation}
The estimate is valid in arbitrary Hilbert-space dimension.  In particular,
$\alpha=1/150$ and $\beta=i/150$ give two distinct nonzero zeros on the
boundary of the certified parameter bidisk.
\end{theorem}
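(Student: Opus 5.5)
The plan is a perturbation of the central square bound of \cref{prop:andocentral}. All the operators involved are holomorphic functions of the single operator $w$, so they commute. I will expand the product of the two Möbius factors, bound its numerator using $\norm{w^2}\le\kappa_0$ and a first-order bound $\norm{w}\le\sqrt2$, and invert its denominator by a Neumann series.

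\emph{Step 1: a first-order bound on $w$.} Since $W(T)\subset\cS_{\pi/4}$, the operator $T^2$ is accretive; this is the standard Kato fact for sectorial operators of half-angle $\le\pi/4$. Hence $T^2+I$ is invertible and $w=(T^2-I)(T^2+I)^{-1}$ is the Cayley transform of an accretive operator, so $\norm{w}\le1$. Only the weaker estimate $\norm{w}\le\sqrt2$ is needed below. The constant $2\sqrt2/150$ in the statement suggests that the author uses this weaker form, presumably from the degree-one case of the sector estimate. Either bound suffices, because the right-hand side of \eqref{eq:smallzeros} is increasing in the assumed bound on $\norm{w}$.

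\emph{Step 2: the algebraic identity.} Put $\delta=1/150$. Since $\norm{\bar\alpha w}\le\sqrt2\,\delta<1$, and likewise for $\beta$, both factors $I-\bar\alpha w$ and $I-\bar\beta w$ are invertible, and
\[
 b_\alpha(w)b_\beta(w)=\bigl(w^2-(\alpha+\beta)w+\alpha\beta I\bigr)
 \bigl(I-(\bar\alpha+\bar\beta)w+\bar\alpha\bar\beta w^2\bigr)^{-1}.
\]
By the triangle inequality, \cref{prop:andocentral} and Step~1, the numerator has norm at most $\kappa_0+2\sqrt2\,\delta+\delta^2$.

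\emph{Step 3: the denominator.} Write the denominator as $I-E$ with
\[
E=(\bar\alpha+\bar\beta)w-\bar\alpha\bar\beta w^2 .
\]
Using $\norm{w^2}\le\kappa_0$ directly, rather than $\norm{w}^2$, gives $\norm{E}\le 2\sqrt2\,\delta+\kappa_0\delta^2<1$. The Neumann series then gives
\[
\norm{(I-E)^{-1}}\le\bigl(1-2\sqrt2\,\delta-\kappa_0\delta^2\bigr)^{-1}.
\]
Multiplying by the numerator bound from Step~2 yields exactly \eqref{eq:smallzeros}.

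\emph{Step 4: strictness.} It remains to check that the resulting quotient is $<\sqrt2$, which is equivalent to
\[
\kappa_0+2\sqrt2\,\delta+\delta^2<\sqrt2\bigl(1-2\sqrt2\,\delta-\kappa_0\delta^2\bigr).
\]
With $\kappa_0\approx1.36256$ and $\delta=1/150$, the left side is about $1.38146$ and the right side is about $1.38530$, so the margin is roughly $4\times10^{-3}$. To make this exact, I would replace $\sqrt2$ by rational enclosures, using a rational lower bound where $\sqrt2$ helps and an upper bound where it hurts, and verify the resulting rational inequality.

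This final check is the only delicate point, because the margin is small. The statement that $\alpha=1/150$, $\beta=i/150$ give distinct nonzero zeros is immediate. The dimension-free claim is inherited from \cref{prop:andocentral} and Step~1.
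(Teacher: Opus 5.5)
\paragraph{Gap in Step 1.} Step~1 is false as stated, and the bound you actually need is left unproved. Sector containment $W(T)\subset\cS_{\pi/4}$ does \emph{not} make $T^2$ accretive, and $\norm{w}\le1$ fails in general.

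Take the extremal operator of \cref{thm:main} at $\alpha=\pi/4$, namely $T=I+\sqrt2J$ with $J=\left(\begin{smallmatrix}0&1\\0&0\end{smallmatrix}\right)$. Then $W(T)$ is the disk of centre $1$ and radius $1/\sqrt2$, tangent to both rays. But $T^2=I+2\sqrt2J$ has numerical range equal to the disk of radius $\sqrt2$ about $1$, which enters the left half-plane. Moreover $w=\sqrt2J$, so $\norm{w}=\sqrt2$. The Kato results you may have in mind concern fractional powers $A^\gamma$ with $0<\gamma<1$ and do not extend to squares. Indeed, if $\norm{w}\le1$ held, von Neumann's inequality would bound every $h_c(w)$ by $1$, and \cref{prop:andocentral} would be trivial.

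So $\sqrt2$ is not a convenient weakening; it is the sharp value. The phrase ``presumably from the degree-one case of the sector estimate'' is not an argument: the general sector bounds in \cref{prop:envelope} give nothing as small as $\sqrt2$ at this angle. The missing ingredient is already in the proof of \cref{prop:andocentral}: $w=(C+D)/\sqrt2$ with $C,D$ commuting contractions, which gives $\norm{w}\le\sqrt2$ at once. Likewise, invertibility of $T^2+I$ should come from spectral inclusion, $\Sp(T^2)\subset\{\Re z\ge0\}$, not from accretivity.

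\paragraph{Comparison with the paper's route.} With that repair, your Steps 2--4 are correct and give exactly the displayed constant, but by a different route from the paper.

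The paper forms the rational bidisk function
$H_{\alpha,\beta}=\bigl(F-(\alpha+\beta)W+\alpha\beta\bigr)/\bigl(1-(\bar\alpha+\bar\beta)W+\bar\alpha\bar\beta F\bigr)$.
Here $W=(\zeta+\eta)/\sqrt2$, and $F$ is the polynomial of \cref{prop:andocentral}, which equals $W^2$ on $q=0$ and has $H^\infty(\mathbb D^2)$ norm at most $\kappa_0$. The paper then:
\begin{enumerate}
\item checks $H_{\alpha,\beta}(C,D)=b_\alpha(w)b_\beta(w)$ using $q(C,D)=0$;
\item bounds $|H_{\alpha,\beta}|$ pointwise on the closed bidisk using $|F|\le\kappa_0$ and $|W|\le\sqrt2$;
\item applies Ando's theorem once more, with a Taylor-approximation step because $H_{\alpha,\beta}$ is rational.
\end{enumerate}

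You instead estimate directly at the operator level. You feed in the already transferred bounds $\norm{w^2}\le\kappa_0$ and $\norm{w}\le\sqrt2$, and invert the denominator by a Neumann series. This avoids the pole-free analysis on the bidisk and the second Ando transfer. The paper's version gives something extra: an explicit bidisk extension of $b_\alpha b_\beta$ with controlled $H^\infty(\mathbb D^2)$ norm.

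\paragraph{Minor slip in Step 4.} The right side is about $1.38746$, not $1.38530$, so the margin is about $6\times10^{-3}$. The paper certifies it exactly as exceeding $2/625$, via $\sqrt2-\kappa_0>1/20$.
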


The lower bound \eqref{eq:globallower} was already present in
\cite{Crouzeix_2003,MR2223270}; after affine normalization, the same
nilpotent model is displayed in \cite{Crouzeix_2016}.  The point of \cref{thm:main} is the
matching upper bound for every square-zero perturbation, in arbitrary
Hilbert-space dimension.  The point of
\cref{thm:twozero,thm:imagdiam,thm:cusp,thm:centraldisk,thm:phasewedge,%
prop:andocentral,thm:smallzeros}
is different: the operator is
unrestricted, while the Schur function belongs to a nonlinear quadratic
family.  No claim of a new value for the unrestricted constant
$C(\alpha)$ is made.

\section{M\"obius reduction: a lens is a compactified sector}

The M\"obius map
\begin{equation}\label{eq:cayley}
             \Phi(z)=\frac{1+z}{1-z},\qquad
             \Phi^{-1}(w)=\frac{w-1}{w+1},
\end{equation}
maps the interior of $\cL_\alpha$ conformally onto the interior of
$\cS_\alpha$, sending $-1$ to $0$ and $1$ to $\infty$.  Each circular arc is
sent to one of the rays $\arg w=\pm\alpha$.

For a bounded operator $T$, recall
\[
             W(T)=\{\langle Tx,x\rangle:\norm{x}=1\}.
\]
A closed half-plane is a spectral set for $T$ precisely when $W(T)$ lies in
that half-plane.  Applying this fact to the two half-planes bounded by the
rays of $\cS_\alpha$ gives the operator form of \eqref{eq:cayley}.

\begin{proposition}[Lens--sector equivalence]\label{prop:equiv}
Suppose $1\notin\Sp(A)$ and put $B=\Phi(A)$.  Then
\begin{equation}\label{eq:equiv}
  \norm{A\mp i\cot\alpha I}\le\csc\alpha
       \quad\Longleftrightarrow\quad
  W(B)\subset\cS_\alpha.
\end{equation}
Consequently the fixed-lens constant $C(\alpha)$ equals the least $K$ such
that
\begin{equation}\label{eq:sectorconstant}
        \norm{r(B)}\le K\sup_{z\in\cS_\alpha}|r(z)|
\end{equation}
for every sectorial $B$ with $W(B)\subset\cS_\alpha$ and every rational $r$
bounded on the sector.
\end{proposition}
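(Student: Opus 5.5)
The plan is to verify \eqref{eq:equiv} by a direct quadratic-form computation: each disk inequality becomes a half-plane condition on $W(B)$. Then we transport spectral-set inequalities through $\Phi$, with a dilation argument for operators that have $1$ in their spectrum.

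\emph{Step 1 (well-posedness).} If $1\notin\Sp(A)$, then $B=(I+A)(I-A)^{-1}$ and $B+I=2(I-A)^{-1}$ is invertible, so $A=(B-I)(B+I)^{-1}$. Conversely, if $W(B)\subset\cS_\alpha$ with $\alpha\le\pi/2$, then $\Sp(B)\subset\overline{W(B)}$ avoids $-1$. Hence $A=\Phi^{-1}(B)$ is defined, and $I-A=2(B+I)^{-1}$ is invertible. So both sides of \eqref{eq:equiv} refer to the same pair $(A,B)$.

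\emph{Step 2 (one disk equals one half-plane).} Put $c=i\kappa$ with $\kappa=\cot\alpha$, and $r=\csc\alpha$. The condition $\norm{A-cI}\le r$ means $\norm{(A-c)y}^2\le r^2\norm y^2$ for all $y$. Since $B+I$ is surjective, we may substitute $y=(B+I)x$, which gives $(A-c)y=\bigl((1-c)B-(1+c)\bigr)x$. Expanding, the coefficients of $\norm{Bx}^2$ and $\norm x^2$ are $r^2-|1\mp c|^2=\csc^2\alpha-1-\cot^2\alpha=0$. Only the cross term survives, and the inequality becomes
\[
 \Re\bigl((1-i\kappa)\langle Bx,x\rangle\bigr)\ge0\qquad\text{for all }x .
\]
Since $\arg(1-i\cot\alpha)=-(\pi/2-\alpha)$, this says that $W(B)$ lies in the closed half-plane $\{-\alpha\le\arg w\le\pi-\alpha\}\cup\{0\}$. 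The disk $D_\alpha^-$ gives the mirror half-plane $\{-(\pi-\alpha)\le\arg w\le\alpha\}$. For $\alpha\le\pi/2$ the intersection of the two half-planes is exactly $\cS_\alpha$, which proves \eqref{eq:equiv}.

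\emph{Step 3 (constants).} For a polynomial $p$, the function $r=p\circ\Phi^{-1}$ is rational and bounded on $\cS_\alpha$. Because $\Phi$ maps $\cL_\alpha\setminus\{1\}$ onto $\cS_\alpha$ and $1\mapsto\infty$, we have $\sup_{\cS_\alpha}|r|=\max_{\cL_\alpha}|p|$, and $p(A)=r(B)$ by the rational functional calculus. Conversely, a rational $r$ bounded on $\cS_\alpha$ has a finite limit at $\infty$. Hence $r\circ\Phi$ is rational with poles off $\cL_\alpha$, and the Mergelyan remark after \eqref{eq:constant} lets us pass between the polynomial and rational formulations. This identifies $C(\alpha)$ with the sector constant for operators with $1\notin\Sp(A)$.

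The main obstacle is to remove the restriction $1\notin\Sp(A)$. The plan is to replace $A$ by $tA$ with $0<t<1$. Since $|c|<r$,
\[
 \norm{tA-cI}\le t\norm{A-cI}+(1-t)|c|\le r ,
\]
so $tA$ is still of $(D_\alpha^+,D_\alpha^-)$-type. Moreover $\Sp(tA)\subset t\cL_\alpha$ avoids $1$, and $p(tA)\to p(A)$ in norm as $t\to1$. Hence the supremum in \eqref{eq:constant} is unchanged by the restriction, and $C(\alpha)$ equals the least $K$ in \eqref{eq:sectorconstant}.
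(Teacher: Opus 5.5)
Your proof is correct. Its outer frame is the paper's: invertibility of $B+I$ (equivalently $-1\notin\Sp(B)$) for the reverse passage; composition of test functions with $\Phi$ and $\Phi^{-1}$ to compare the two constants; and the shrinking $A\mapsto tA$ (the paper's $(1-\varepsilon)A$) to remove the hypothesis $1\notin\Sp(A)$, which works because both disks are convex and contain $0$.

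The genuine difference is how you prove \eqref{eq:equiv}. The paper argues through spectral sets. $\Phi$ sends $D_\alpha^\pm$ to the two supporting half-planes of $\cS_\alpha$. The functional calculus then transports the disk spectral-set inequalities (von Neumann) to half-plane spectral-set inequalities for $B$. The criterion that a closed half-plane is a spectral set exactly when it contains $W(B)$ finishes the argument. You instead substitute $y=(B+I)x$ and use $|1\pm i\cot\alpha|=\csc\alpha$. This amounts to the exact congruence identity
\[
 (B+I)^*\bigl(\csc^2\alpha\, I-(A\mp i\cot\alpha I)^*(A\mp i\cot\alpha I)\bigr)(B+I)
 =2\bigl((1\mp i\cot\alpha)B+(1\pm i\cot\alpha)B^*\bigr).
\]

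Your route makes the proof of \eqref{eq:equiv} fully self-contained. It needs neither von Neumann's inequality nor the half-plane criterion. It also avoids the small check that transported test functions have no singularity at the common vertex $1\in\partial D_\alpha^\pm$, which $\Phi$ sends to $\infty$. And it shows exactly where $\alpha\le\pi/2$ enters: the two half-planes intersect precisely in $\cS_\alpha$. The paper's route is shorter and more conceptual, and it reuses the same transport of test functions verbatim for the comparison of constants. Your identity is the same congruence device the paper itself uses later, in the proof of \cref{prop:andocentral}.
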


\begin{proof}
The scalar map \eqref{eq:cayley} sends the two disks to the two supporting
half-planes of $\cS_\alpha$.  The functional-calculus identity
$\Phi^{-1}(\Phi(A))=A$ transports the two disk spectral-set inequalities to
the two half-plane spectral-set inequalities.  The half-plane criterion just
recalled proves \eqref{eq:equiv}.  It also gives both inequalities between the
best constants after composing test functions with $\Phi$ and $\Phi^{-1}$.

If $1\in\Sp(A)$, one first replaces $A$ by $(1-\varepsilon)A$; the normalized
lens is convex and contains $0$, so both inequalities in \eqref{eq:type} are
preserved.  Letting $\varepsilon\downarrow0$ gives the result by norm
continuity of polynomial functional calculus.  The reverse passage is
obtained directly because $-1\notin\cS_\alpha$ and hence
$-1\notin\Sp(B)$.  This is the standard argument of
\cite{MR2223270}.
\end{proof}

Thus the original two-disk problem is not merely analogous to a sectorial
numerical-range problem; the optimal constants are identical.  We shall use
this equivalence again in \cref{sec:certificate}.

\section{Three sharp lemmas}

We isolate the geometric, function-theoretic, and operator-norm ingredients
of the proof.

\begin{lemma}[Numerical range of a square-zero operator]\label{lem:wn}
If $N\in\B(\Hh)$ and $N^2=0$, then
\[
              W(N)=\{z\in\C:|z|\le\tfrac12\norm N\},
\]
with closure understood when the norm is not attained.  Consequently, for
$\lambda=re^{i\theta}$ with $|\theta|<\alpha$,
\begin{equation}\label{eq:Ngeom}
 W(\lambda I+N)\subset\cS_\alpha
 \quad\Longleftrightarrow\quad
 \norm N\le2r\sin(\alpha-|\theta|).
\end{equation}
\end{lemma}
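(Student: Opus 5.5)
The plan has two parts. First I determine the numerical range of a square-zero operator exactly, using a $2\times2$ block decomposition. Then I read off the sector condition as a disk-in-sector containment.

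\emph{Step 1: block decomposition.} Since $N^2=0$, we have $\operatorname{ran}N\subset\ker N$. I decompose $\Hh=\Hh_0\oplus\Hh_1$ with $\Hh_0=\ker N$ and $\Hh_1=(\ker N)^\perp$. In this decomposition
\[
N=\begin{pmatrix}0&K\\0&0\end{pmatrix},\qquad K=N|_{\Hh_1}:\Hh_1\to\Hh_0,\qquad \norm K=\norm N.
\]
Write a unit vector as $x=x_0+x_1$. Then $\langle Nx,x\rangle=\langle Kx_1,x_0\rangle$, and so
\[
|\langle Nx,x\rangle|\le\norm K\,\norm{x_0}\,\norm{x_1}\le\tfrac12\norm N,
\]
because $\norm{x_0}^2+\norm{x_1}^2=1$. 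This gives the inclusion $W(N)\subset\{|z|\le\tfrac12\norm N\}$.

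\emph{Step 2: the reverse inclusion.} Take a unit vector $y\in\Hh_1$ with $\norm{Ky}$ close to $\norm N$, and set $e=Ky/\norm{Ky}\in\Hh_0$. For $x=(e^{i\phi}e+y)/\sqrt2$ one gets
\[
\langle Nx,x\rangle=\tfrac12 e^{-i\phi}\norm{Ky}.
\]
This realizes every point of a circle of radius $\tfrac12\norm{Ky}$. The numerical range is convex (Toeplitz--Hausdorff) and contains $0$, since $0=\langle Ne,e\rangle$. Hence $W(N)$ contains every open disk of radius less than $\tfrac12\norm N$. When $\norm N$ is attained, it contains the full closed disk. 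This is the sense of ``closure understood'' in the statement. If $N=0$ the claim is trivial.

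\emph{Step 3: the sector condition.} Since $W(\lambda I+N)=\lambda+W(N)$, the condition $W(\lambda I+N)\subset\cS_\alpha$ says that the disk of radius $\tfrac12\norm N$ centred at $\lambda$ lies in the closed convex sector. The sector is the intersection of the two half-planes bounded by the rays $\arg z=\pm\alpha$. The distance from $\lambda=re^{i\theta}$ to the ray at angle $\alpha$ is $r\sin(\alpha-\theta)$, and the distance to the ray at angle $-\alpha$ is $r\sin(\alpha+\theta)$. Both angles lie in $(0,\pi)$, so the distance from $\lambda$ to the boundary is $r\sin(\alpha-|\theta|)$. A closed disk lies in the sector exactly when its radius is at most this distance, which is the condition $\tfrac12\norm N\le r\sin(\alpha-|\theta|)$. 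The non-attained case causes no trouble, because the sector is closed and passing to closure preserves containment. This proves \eqref{eq:Ngeom}.

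The only delicate point is the attainment and closure bookkeeping in Step 2. Everything else is elementary.
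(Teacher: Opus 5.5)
Your proof is correct and follows the paper's argument. Both use the decomposition $\Hh=\ker N\oplus(\ker N)^\perp$ with $N$ strictly upper triangular, unit vectors mixing the two summands with a free phase, and the disk-in-sector criterion via the distance to the nearer boundary. You differ only in filling the disk through Toeplitz--Hausdorff and $0\in W(N)$, where the paper varies the relative norms of the two components directly, and in spelling out the bound $\norm{x_0}\norm{x_1}\le\tfrac12$ and the closure bookkeeping.

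One cosmetic point: $r\sin(\alpha-\theta)$ is the distance to the supporting line $\arg z=\alpha$, not to the ray, and the two differ when $\alpha-\theta>\pi/2$. Since you treat the sector as an intersection of two half-planes, that line distance is exactly the quantity needed, and the minimum $r\sin(\alpha-|\theta|)$ is unaffected.
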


\begin{proof}
Since $\operatorname{Ran}N\subset\ker N$, relative to
$\Hh=\ker N\oplus(\ker N)^\perp$ the operator has the form
\[
                   N=\begin{pmatrix}0&X\\0&0\end{pmatrix}.
\]
Its numerical range is the disk centred at zero of radius $\norm X/2$;
this follows at once by varying the relative phase and the two norms of a
unit vector in the displayed decomposition.  Since $\norm X=\norm N$, the
first assertion follows.  The largest disk centred at $re^{i\theta}$ and
contained in the sector has radius equal to the distance to the nearer
boundary ray, namely $r\sin(\alpha-|\theta|)$.
\end{proof}

\begin{lemma}[Sector Schwarz--Pick estimate]\label{lem:sp}
Let $q=\pi/(2\alpha)$ and let $f:\operatorname{int}\cS_\alpha\to\mathbb D$
be holomorphic.  At $\lambda=re^{i\theta}$, $|\theta|<\alpha$, one has
\begin{equation}\label{eq:sp}
 |f'(\lambda)|\le
 \frac{q\bigl(1-|f(\lambda)|^2\bigr)}
      {2r\cos(q\theta)}.
\end{equation}
Moreover,
\begin{equation}\label{eq:trig}
       \frac{\sin(\alpha-|\theta|)}{\cos(q\theta)}
       \le \sin\alpha.
\end{equation}
\end{lemma}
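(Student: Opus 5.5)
The plan is to conformally map the sector onto the right half-plane and then apply the classical Schwarz--Pick inequality there.

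Put $\psi(z)=z^{q}$ with the principal branch, so that $\psi$ maps $\operatorname{int}\cS_\alpha$ biholomorphically onto the right half-plane $\{\Re w>0\}$. Write $f=g\circ\psi$, where $g$ is a holomorphic self-map of the right half-plane into $\mathbb D$. Compose with the Cayley map $w\mapsto(w-1)/(w+1)$ from the half-plane to the disk. Schwarz--Pick then gives the half-plane form
\[
|g'(w)|\le\frac{1-|g(w)|^2}{2\Re w}.
\]
This holds because the hyperbolic density on the right half-plane is $1/(2\Re w)$, normalized consistently with the disk density $1/(1-|z|^2)$.

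By the chain rule, $f'(\lambda)=g'(\lambda^q)\,q\lambda^{q-1}$, so
\[
|f'(\lambda)|\le\frac{(1-|f(\lambda)|^2)\,q r^{q-1}}{2r^q\cos(q\theta)}.
\]
Here we used $\Re\lambda^q=r^q\cos(q\theta)$, which is positive because $|q\theta|<\pi/2$. This simplifies to \eqref{eq:sp}. The only care needed is the normalization constant in the half-plane Schwarz--Pick inequality, which can be checked on the map $g(w)=(w-1)/(w+1)$ at $w=1$: there $g'(1)=1/2$, and the right-hand side is $1/2$.

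For \eqref{eq:trig}, symmetry lets us assume $0\le\theta<\alpha$. Substitute $\phi=\alpha-\theta\in(0,\alpha]$. Since $q\alpha=\pi/2$, we have $\cos(q\theta)=\cos(\pi/2-q\phi)=\sin(q\phi)$. The claim thus becomes
\[
\sin\phi\le\sin\alpha\,\sin(q\phi),\qquad 0<\phi\le\alpha.
\]
Equivalently, we need $h(\phi)=\sin(q\phi)/\sin\phi\ge 1/\sin\alpha=h(\alpha)$. It therefore suffices to show that $h$ is nonincreasing on $(0,\alpha]$.

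We use $q\ge1$ and $q\phi\le\pi/2$. The sign of $h'$ is the sign of
\[
q\cos(q\phi)\sin\phi-\sin(q\phi)\cos\phi,
\]
which is $\le0$ exactly when $q\tan\phi\le\tan(q\phi)$. This last inequality follows from the convexity of $\tan$ on $[0,\pi/2)$ together with $\tan0=0$. That convexity makes $x\mapsto\tan x/x$ increasing, so $\tan(q\phi)/(q\phi)\ge\tan\phi/\phi$. At $q\phi=\pi/2$ the inequality holds trivially.

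The main potential obstacle is this monotonicity step. It is elementary once we reduce to $\tan x/x$ increasing, and that reduction is the route we take. Equality in \eqref{eq:trig} occurs at $\theta=0$, which is consistent with the sharpness claimed in \cref{thm:main}.
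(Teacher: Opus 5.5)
Your proof is correct and follows the paper's route. For \eqref{eq:sp} you use the power map $z^q$ with half-plane Schwarz--Pick, and for \eqref{eq:trig} a logarithmic-derivative monotonicity argument resting on the fact that $\tan t/t$ increases (equivalently, $t\cot t$ decreases); the only difference is that you vary $\phi=\alpha-|\theta|$ at fixed aperture, whereas the paper fixes $u=\phi/\alpha$ and compares $\sin(ux)/\sin x$ at $x=\alpha$ and $x=\pi/2$, and since $\sin(u\alpha)\le\sin\alpha\,\sin(\pi u/2)$ is symmetric under exchanging $u$ and $2\alpha/\pi$, the two arguments are mirror images.
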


\begin{proof}
The power map $P(z)=z^q$ sends $\operatorname{int}\cS_\alpha$ conformally
onto the right half-plane.  Schwarz--Pick in that half-plane gives
\[
 |(f\circ P^{-1})'(P(\lambda))|
 \le \frac{1-|f(\lambda)|^2}{2\operatorname{Re}P(\lambda)}.
\]
Since $|P'(\lambda)|=qr^{q-1}$ and
$\operatorname{Re}P(\lambda)=r^q\cos(q\theta)$, this is \eqref{eq:sp}.

It remains to prove \eqref{eq:trig}.  By symmetry take $0\le\theta<\alpha$
and set $u=1-\theta/\alpha\in(0,1]$.  Then \eqref{eq:trig} is equivalent to
\begin{equation}\label{eq:sinratio}
             \frac{\sin(u\alpha)}{\sin\alpha}
             \le \sin\frac{\pi u}{2}.
\end{equation}
For fixed $0<u\le1$, the function
$x\mapsto\sin(ux)/\sin x$ is increasing on $(0,\pi/2]$.  Indeed, its
logarithmic derivative is $u\cot(ux)-\cot x\ge0$, because
$t\mapsto t\cot t$ is decreasing there.  Comparing $x=\alpha$ with
$x=\pi/2$ proves \eqref{eq:sinratio}.
\end{proof}

\begin{lemma}[Exact norm formula]\label{lem:norm}
If $N^2=0$, $a,b\in\C$, $s=|a|$, and $t=|b|\norm N$, then
\begin{equation}\label{eq:normformula}
       \norm{aI+bN}=\frac{t+\sqrt{t^2+4s^2}}{2}.
\end{equation}
For $c\ge1$, if $t\le c(1-s^2)$ and $0\le s\le1$, then
\begin{equation}\label{eq:scalaropt}
              \norm{aI+bN}\le c.
\end{equation}
\end{lemma}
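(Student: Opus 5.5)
We use the same block decomposition as in the proof of \cref{lem:wn} and reduce to a $2\times2$ computation. Relative to $\Hh=\ker N\oplus(\ker N)^\perp$ we write $N=\begin{pmatrix}0&X\\0&0\end{pmatrix}$ with $X:(\ker N)^\perp\to\ker N$ and $\norm X=\norm N$. Then
\[
aI+bN=\begin{pmatrix}aI&bX\\0&aI\end{pmatrix}.
\]
If $N=0$ the formula reads $\norm{aI}=s$, which is trivial, so assume $N\neq0$. Replacing $a,b$ by $|a|,|b|$ does not change the norm: conjugating by the unitary $\operatorname{diag}(I,e^{i\phi}I)$ and multiplying by a unimodular scalar rotates the phases independently. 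So we may take $a=s\ge0$ and $b=|b|\ge0$.

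Next take the polar decomposition $X=V|X|$. By the spectral theorem for $|X|$, the operator $M$ is unitarily equivalent to a direct integral, or direct sum up to approximation, of $2\times2$ matrices $\begin{pmatrix}s&|b|x\\0&s\end{pmatrix}$, where $x$ ranges over $\Sp(|X|)\subset[0,\norm X]$. There is also possibly a block $sI$ on the part of $\ker N$ orthogonal to $\operatorname{Ran}X$. A cleaner route avoids this bookkeeping: compute $\norm{M}^2=\norm{M^*M}$ through $M^*M=\begin{pmatrix}s^2 I& s|b|X\\ s|b|X^*& s^2I+|b|^2X^*X\end{pmatrix}$ and a direct-sum reduction over the singular values of $X$. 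Either way, for a single $2\times2$ block with off-diagonal entry $\tau=|b|x$, the largest singular value of $\begin{pmatrix}s&\tau\\0&s\end{pmatrix}$ is $(\tau+\sqrt{\tau^2+4s^2})/2$. This follows because the singular values $\sigma_1,\sigma_2$ satisfy $\sigma_1\sigma_2=s^2$ and $\sigma_1^2+\sigma_2^2=2s^2+\tau^2$, hence $\sigma_1-\sigma_2=\tau$. The block norm is increasing in $\tau$, and it is at least $s$, which covers the extra $sI$ block. Taking the supremum over $x\in\Sp|X|$, whose maximum is $\norm X$, gives \eqref{eq:normformula} with $t=|b|\norm N$. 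The one point needing care is that the supremum over the spectral decomposition is attained or approached at $x=\norm X$, and this holds since $\norm X\in\Sp|X|$.

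For \eqref{eq:scalaropt}, the right side of \eqref{eq:normformula} is increasing in $t$, so it suffices to treat $t=c(1-s^2)$. Put $g(t)=(t+\sqrt{t^2+4s^2})/2$. The inequality $g\le c$ is equivalent to $\sqrt{t^2+4s^2}\le 2c-t$. The right side is nonnegative because $t\le c(1-s^2)\le c<2c$. Squaring, the inequality becomes $4s^2\le 4c^2-4ct$, that is, $t\le c-s^2/c$. Since $c\ge1$ we have $s^2/c\le s^2\le cs^2$, so $c(1-s^2)\le c-s^2/c$, and the hypothesis $t\le c(1-s^2)$ suffices. This step is routine.

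The main obstacle is the rigorous reduction from the infinite-dimensional operator to $2\times2$ blocks. I would handle it by working with $M^*M$ and noting that, for real $\mu$, $\norm M\le\mu$ if and only if $\mu^2I-M^*M\ge0$. Via a Schur complement in the block form, this amounts to an operator inequality in $X^*X$ alone. That inequality can be checked by the functional calculus of $X^*X$, which turns it into the scalar $2\times2$ computation above.
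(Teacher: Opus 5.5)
Your proposal is correct and takes essentially the same route as the paper: the block form from \cref{lem:wn}, removal of phases by diagonal unitaries, reduction to the largest singular value of $\left(\begin{smallmatrix}s&\tau\\0&s\end{smallmatrix}\right)$ maximized at $\tau=t$, and then monotonicity in $t$ followed by one squaring step. Your spectral-theorem (equivalently Schur-complement) argument just makes rigorous what the paper compresses into ``approximate norming vectors.'' Your threshold $t\le c-s^2/c$ is equivalent to the paper's observation that the difference of squares equals $4s^2(c^2-1)\ge0$.
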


\begin{proof}
Using the block form in the proof of \cref{lem:wn}, phases may be removed by
unitaries.  The norm is the supremum of the largest singular value of
$\left(\begin{smallmatrix}s&t_0\\0&s\end{smallmatrix}\right)$ over
$0\le t_0\le t$, which gives \eqref{eq:normformula}; approximate norming
vectors cover the case in which $\norm N$ is not attained.

The right side of \eqref{eq:normformula} increases with $t$.  Substituting
$t=c(1-s^2)$, the desired inequality is equivalent to
\[
  \sqrt{c^2(1-s^2)^2+4s^2}\le c(1+s^2).
\]
After squaring, the difference between the right and left squares is
$4s^2(c^2-1)\ge0$.
\end{proof}

\section{Proof of the sharp square-zero theorem}\label{sec:proof}

\begin{proof}[Proof of \cref{thm:main}]
If $\lambda=0$, sector containment and \cref{lem:wn} force $N=0$, and the
claim is immediate.  If $\lambda$ lies on a boundary ray, the same geometry
forces $N=0$, so the claim again follows from $|f(\lambda)|\le1$.  We may
therefore assume that $\lambda$ lies in the sector interior.
Write $\lambda=re^{i\theta}$ and set
\[
        s=|f(\lambda)|,\qquad
        t=|f'(\lambda)|\norm N,\qquad
        c_\alpha=\frac{\pi\sin\alpha}{2\alpha}=q\sin\alpha.
\]
The holomorphic functional calculus truncates exactly at first order:
\begin{equation}\label{eq:truncate}
                  f(\lambda I+N)=f(\lambda)I+f'(\lambda)N.
\end{equation}
Combining \cref{lem:wn,lem:sp} gives
\begin{align*}
 t
 &\le \frac{q(1-s^2)}{2r\cos(q\theta)}
            \,2r\sin(\alpha-|\theta|)\\
 &\le q\sin\alpha\,(1-s^2)
   =c_\alpha(1-s^2).
\end{align*}
Also $c_\alpha\ge1$ for $0<\alpha\le\pi/2$.  Applying
\cref{lem:norm} to \eqref{eq:truncate} proves \eqref{eq:mainupper}.

For sharpness let $J=\left(\begin{smallmatrix}0&1\\0&0\end{smallmatrix}\right)$
and use \eqref{eq:extsector}.  By \cref{lem:wn}, $W(B_\alpha)$ is the disk
with centre $1$ and radius $\sin\alpha$, which is tangent to both boundary
rays of $\cS_\alpha$.  The function $f_\alpha$ maps the sector to the unit
disk, while
\[
                 f_\alpha(1)=0,\qquad f_\alpha'(1)=\frac q2.
\]
Therefore
\[
        f_\alpha(B_\alpha)
        =q\sin\alpha\,J,\qquad
        \norm{f_\alpha(B_\alpha)}=c_\alpha.
\]
This proves equality.
\end{proof}

\begin{proof}[Proof of \cref{cor:lensclass}]
Apply \cref{prop:equiv}.  Composition with $\Phi$ identifies the restricted lens
functional calculus isometrically with the class in \cref{thm:main}.  The
upper bound follows from that theorem and sharpness follows from
\cref{cor:lenscert}.  A scalar direct summand at $1$ contributes at most the
supremum norm of the test function.
\end{proof}

For $0<\alpha<\pi/2$, inspection of the inequalities in the proof gives
necessary conditions for equality: the centre $\lambda$ lies on the
symmetry axis, the numerical-range radius is maximal, the Schwarz--Pick
inequality is sharp, and $f(\lambda)=0$.  We do not need, and do not claim,
a classification of all equality cases, particularly at the half-plane
endpoint $\alpha=\pi/2$.

\section{The explicit lens certificate}\label{sec:certificate}

\begin{proof}[Proof of \cref{cor:lenscert}]
For $A_\alpha=\sin\alpha J$, the exact norm formula gives
\begin{align*}
 \norm{A_\alpha\mp i\cot\alpha I}
 &=\frac{\sin\alpha+
       \sqrt{\sin^2\alpha+4\cot^2\alpha}}{2}\\
 &=\csc\alpha.
\end{align*}
Hence both disks in \eqref{eq:disks} are spectral sets for $A_\alpha$.
Moreover,
\[
       \Phi(A_\alpha)
       =(I+A_\alpha)(I-A_\alpha)^{-1}
       =I+2\sin\alpha J=B_\alpha.
\]
The function $g_\alpha=f_\alpha\circ\Phi$ is bounded by one on the lens,
continuous on its closure, and holomorphic in its interior.  Although it need
not be rational when $\pi/(2\alpha)\notin\mathbb N$, Mergelyan's theorem
provides polynomials $q_n\to g_\alpha$ uniformly on $\cL_\alpha$.  On a
small circle around the sole eigenvalue $0$ the convergence is locally
uniform, so also $q_n'(0)\to g_\alpha'(0)$.  Consequently
$q_n(A_\alpha)\to g_\alpha(A_\alpha)$ in norm.  Rescaling $q_n$ by
$\max_{\cL_\alpha}|q_n|$ yields the asserted $p_n$, and
\[
 \norm{g_\alpha(A_\alpha)}
 =\norm{f_\alpha(B_\alpha)}
 =\frac{\pi\sin\alpha}{2\alpha}.
\]
\end{proof}

When $q=\pi/(2\alpha)$ is an integer, $g_\alpha$ itself is rational and the
approximation step may be omitted.  The important point is that
\eqref{eq:Alens} is an entirely finite-dimensional, machine-checkable lower
certificate: the two disk constraints and the functional-calculus value are
closed-form identities.

\section{A right-angle palindromic two-zero sum of squares}\label{sec:twozero}

We first locate the families in the full moduli space of quadratic inner
functions.  Write $\mathbb H_+=\{z:\Re z>0\}$.

\begin{proposition}[Critical-point normal form]\label{prop:normalform}
Let $F$ be a degree-two finite Blaschke product on $\mathbb H_+$.  There are
$r>0$, $s\in\mathbb R$, $c\in\mathbb D$, and $\omega\in\mathbb T$ such that
\begin{equation}\label{eq:normalform}
 F(rz)=\omega\frac{\psi_s(z)^2+\bar c}{1+c\psi_s(z)^2},
 \qquad
 \psi_s(z)=\frac{z-(1+is)}{z+(1-is)}.
\end{equation}
Thus, up to positive input dilation and an irrelevant output phase, the
quadratic moduli consist of one real critical-point parameter $s$ and one
disk parameter $c$.  The real post-automorphism diameter $s=0$, $c\in(-1,1)$
is the palindromic family of \cref{thm:twozero};
\cref{thm:imagdiam,thm:cusp,thm:centraldisk} concern genuinely complex $c$ in
the same slice $s=0$.
\end{proposition}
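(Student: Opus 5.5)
Conjugate $F$ by the Cayley map $\kappa(z)=(z-1)/(z+1)$ of $\mathbb H_+$ onto $\mathbb D$, which turns $F$ into a degree-two disk Blaschke product. The normal form then follows from the classification of such products under pre- and post-automorphisms. We may only precompose with dilations $z\mapsto rz$ and the specific maps $\psi_s$, and postcompose with disk automorphisms. So the plan is to spend the dilation and the real parameter $s$ on moving the critical point, and then absorb everything else into $\omega$ and $c$.

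\emph{Step 1 (critical point).} A degree-two Blaschke product on $\mathbb H_+$ has exactly one critical point $\zeta$ in $\mathbb H_+$. Indeed, $F\circ\kappa^{-1}$ is a degree-two Blaschke product on $\mathbb D$, and those have exactly one critical point in $\mathbb D$ (their derivative has $2\deg-2=2$ zeros, symmetric under reflection in the circle, none on it).

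Write $\zeta=\rho e^{i\varphi}$ with $|\varphi|<\pi/2$ and put $r=\rho\cos\varphi$. Then $\zeta/r=1+is$ with $s=\tan\varphi\in\mathbb R$, so $G(z):=F(rz)$ has critical point $1+is$.

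The map $\psi_s$ is an automorphism $\mathbb H_+\to\mathbb D$. It sends $1+is$ to $0$, and it maps $\mathbb H_+$ onto $\mathbb D$ because
\[
|z-(1+is)|<|z+(1-is)|\iff \Re z>0 .
\]

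\emph{Step 2 (disk normal form).} Set $H=G\circ\psi_s^{-1}$. This is a degree-two Blaschke product on $\mathbb D$ with critical point $0$, so $H'(0)=0$.

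Let $a=H(0)$ and $\varphi_a(w)=(w-a)/(1-\bar a w)$. Then $K=\varphi_a\circ H$ is a degree-two Blaschke product with $K(0)=K'(0)=0$, so $K$ has a double zero at $0$ and $K(w)=\eta w^2$ with $\eta\in\mathbb T$. Hence
\[
H(w)=\varphi_a^{-1}(\eta w^2)=\frac{\eta w^2+a}{1+\bar a\eta w^2}.
\]

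It remains to match the form $\omega(w^2+\bar c)/(1+cw^2)$. Expanding gives $\omega w^2+\omega\bar c$ over $1+cw^2$. Comparing coefficients forces $\omega=\eta$ and $\omega\bar c=a$, so $\bar c=\bar\eta a$, that is, $c=\eta\bar a$. The denominator then reads $1+cw^2=1+\bar a\eta w^2$, which agrees. Since $|c|=|a|<1$, we get $c\in\mathbb D$. Substituting $w=\psi_s(z)$ yields \eqref{eq:normalform}.

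\emph{Step 3 (identification of the slices).} At $s=0$ we have $\psi_0(z)=(z-1)/(z+1)$. For $h_c$ with $\omega=1$, clearing denominators gives
\[
h_c(\psi_0(z))=\frac{(z-1)^2+\bar c(z+1)^2}{(z+1)^2+c(z-1)^2}.
\]
Composing with $z=T^2$ recovers the quotients in \cref{thm:imagdiam,thm:cusp,thm:centraldisk}. For real $c=-v$, expanding gives
\[
\frac{(1-v)(z^2+1)-2(1+v)z}{(1-v)(z^2+1)+2(1+v)z},
\]
which, up to the normalization $u=2(1+v)/(1-v)$, is the palindromic quotient of \cref{thm:twozero}.

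\emph{Main obstacle.} The only delicate point is Step 1: uniqueness of the interior critical point, and the fact that dilation together with $s$ reaches every point of $\mathbb H_+$. Both are settled by the disk count via Riemann--Hurwitz (or the explicit derivative) and the elementary identity $\rho e^{i\varphi}/(\rho\cos\varphi)=1+i\tan\varphi$. Nondegenerate degree two is used to guarantee that the critical point is interior and simple.
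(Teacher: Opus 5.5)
Your proposal is correct and follows the paper's proof essentially step for step. You locate the unique critical point $p=r(1+is)$ with $r=\Re p$, move it to $0$ by $\psi_s$, compose with the disk automorphism at $a=H(0)$ to reach $\eta w^2$, and solve back with $c=\eta\bar a$, $|c|=|a|<1$; your $s=0$ substitution with $u=2(1+v)/(1-v)$ is the same as the paper's relation $a=(u-2)/(u+2)$, $c=-a$.
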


\begin{proof}
A degree-two disk Blaschke product has one critical point in the disk,
counting multiplicity.  Conjugating by a half-plane--disk M\"obius map gives
the corresponding unique critical point $p\in\mathbb H_+$ of $F$.  Write
$p=r(1+is)$ with $r=\Re p>0$, and conjugate $F(rz)$ by $\psi_s^{-1}$ on the
input.  The resulting disk Blaschke product $G$ has its critical point at
zero.  If $a=G(0)$, then
\[
             \frac{G(w)-a}{1-\bar aG(w)}
\]
is a degree-two Blaschke product with a double zero at zero, hence equals
$\gamma w^2$ for some $\gamma\in\mathbb T$.  Solving for $G$ and absorbing
the unimodular factor in the output gives exactly
\eqref{eq:normalform}, with $|c|=|a|<1$.

For $s=0$, put $w=(z-1)/(z+1)$.  Direct substitution gives
\[
 \frac{z^2-uz+1}{z^2+uz+1}
 =\frac{w^2-a}{1-aw^2},
 \qquad a=\frac{u-2}{u+2},
\]
so $c=-a$ runs through the real diameter as $u$ runs through the two
palindromic ranges used in \cref{thm:twozero}.
\end{proof}

We now prove \cref{thm:twozero}.  The algebraic selector in the next lemma is
somewhat lengthy, but every quantity is explicit and the resulting identity
is dimension-free.  For a polynomial
$\ell(z)=\sum_{j=0}^3\ell_jz^j$, write
\[
 \ell^\#(z)=\sum_{j=0}^3\overline{\ell_{3-j}}z^j,
 \qquad
 \ell^c(z)=\sum_{j=0}^3\overline{\ell_j}z^j,
 \qquad
 \ell^r(z)=\sum_{j=0}^3\ell_{3-j}z^j.
\]

\begin{lemma}[Global algebraic selector]\label{lem:selector}
For every $u\ge2$ there are real numbers $b,c,x,m,n,r,\sigma$, with
$r>0$, such that, for every $T=X+iY$ and
\[
             P=X+Y,\qquad Q=X-Y,\qquad Z=T^2,
\]
the polynomials
\begin{align*}
 D&=Z^2+uZ+I,\qquad N=Z^2-uZ+I,\\
 a(z)&=1+bz+cz^2+bz^3+z^4,\\
 \ell(z)&=i\sigma+xz+(m+in)z^2+(1+i)rz^3
\end{align*}
satisfy the hereditary identity
\begin{align}
 2D^*D-N^*N={}&a(T)^*a(T)
   +\ell(T)^*P\ell(T)+\ell^\#(T)^*P\ell^\#(T)\notag\\
  &+\ell^c(T)^*Q\ell^c(T)+\ell^r(T)^*Q\ell^r(T).
 \label{eq:twosos}
\end{align}
\end{lemma}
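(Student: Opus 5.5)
The plan is to reduce \eqref{eq:twosos} to an identity between two-variable polynomials and then solve the resulting finite algebraic system explicitly. Every term on both sides of \eqref{eq:twosos} is \emph{hereditary}: it has the form $g(T)^*Mh(T)$, where $M$ is either $I$ or one of the self-adjoint operators $P,Q$, which are linear in $T,T^*$. Writing $X=(T+T^*)/2$ and $Y=(T-T^*)/(2i)$ gives
\[
 P=\tfrac{1-i}{2}T+\tfrac{1+i}{2}T^*,\qquad
 Q=\tfrac{1+i}{2}T+\tfrac{1-i}{2}T^* .
\]
After the substitution, each term becomes a sum of monomials $T^{*j}T^k$, with all adjoints placed on the left. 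Such an expression vanishes for every $T$ exactly when its hereditary symbol, obtained by replacing $T^{*j}T^k$ with $\bar w^jz^k$, vanishes identically. So it suffices to prove the scalar identity
\begin{align*}
 2\overline{D(w)}D(z)-\overline{N(w)}N(z)
 ={}&\overline{a(w)}a(z)
 +\pi(z,w)\bigl(\overline{\ell(w)}\ell(z)+\overline{\ell^\#(w)}\ell^\#(z)\bigr)\\
 &+\kappa(z,w)\bigl(\overline{\ell^c(w)}\ell^c(z)+\overline{\ell^r(w)}\ell^r(z)\bigr),
\end{align*}
where $D(z)=z^4+uz^2+1$, $N(z)=z^4-uz^2+1$, $\pi=\tfrac{1-i}{2}z+\tfrac{1+i}{2}\bar w$, and $\kappa=\tfrac{1+i}{2}z+\tfrac{1-i}{2}\bar w$. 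This identity is independent of the Hilbert space, which gives the dimension-free claim.

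Both sides are Hermitian forms in the monomial vectors $(1,z,\dots,z^4)$ and $(1,\bar w,\dots,\bar w^4)$. Equivalently, the identity says that a certain $5\times5$ Hermitian coefficient matrix vanishes. The left side has only even-degree entries, equal to $\operatorname{diag}$-type combinations of $1,u,u^2$. Next I would use the symmetries of the ansatz. The operations $\#$, $c$ and $r$ together generate the Klein four-group acting on $\ell$ by coefficient reversal and conjugation. Coefficient reversal with conjugation corresponds to $z\mapsto1/\bar z$ up to powers, and the sum over each orbit is paired with whichever of $P,Q$ makes it invariant. Under the maps $z\mapsto\bar z$ (which exchanges $P$ and $Q$) and reversal, the right side therefore becomes symmetric in the same way as the left side. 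The left side is also palindromic, with real coefficients. Consequently the roughly 25 complex coefficient equations collapse to a small set of real equations, indexed by the entries $(j,k)$ with $j\le k$ modulo the reversal $(j,k)\mapsto(4-k,4-j)$.

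I would then write out these reduced equations in the seven unknowns $b,c,x,m,n,r,\sigma$. The $a$-block is quadratic in $b,c$, and the $\ell$-blocks are quadratic in $x,m,n,r,\sigma$. The top-degree entries, such as $z^4\bar w^4$ and $z^4\bar w^3$, should determine $r^2$ and a relation between $r$ and $m,n$. The equations coming from odd total degree must cancel among the four $\ell$-terms, and this should force relations such as linking $\sigma$ to $r$ and $x$ to $n$. The remaining even entries then fix $b$ and $c$ in terms of the $\ell$-parameters and $u$. My intention is to eliminate variables successively until a single polynomial equation in $r^2$ (or one auxiliary parameter) with coefficients in $\mathbb Q[u]$ remains. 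I would then exhibit its root in closed form by radicals in $u$, and back-substitute explicit formulas for the other six parameters.

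The main obstacle is this last step. Finding a solution is not the issue; the difficulty is proving that a \emph{real} solution with $r>0$ exists for \emph{every} $u\ge2$, which requires all discriminants and radicands appearing in the elimination to be nonnegative on $[2,\infty)$. I would first compute numerical solutions at several values of $u$ (including the boundary value $u=2$, where $N$ acquires a double factor) to identify the correct branch. Then I would verify the required sign conditions as polynomial inequalities in $u$ on $[2,\infty)$, for example by substituting $u=2+v$ and checking that all coefficients are nonnegative, or by an exact Sturm-sequence argument. Finally I would confirm the full identity symbolically by substituting the explicit formulas back into the coefficient matrix.
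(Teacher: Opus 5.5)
\textbf{There is a genuine gap: the existence step.} Your reduction to the hereditary symbol, and then to finitely many real coefficient equations, is exactly what the paper does. It arrives at seven real equations; for example, the $\bar w^3z^4$ entry gives $b+2r^2+\sigma^2=0$. (So the top entries fix $b$, not $r^2$; the $\bar w^4z^4$ entry is just $1=1$.) The problem is the existence step, which is the whole content of the lemma.

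You propose to eliminate down to one polynomial in $r^2$ over $\mathbb Q[u]$ and write its root by radicals in $u$. Nothing supports this. Counting $u$, there are eight unknowns and seven equations, so the solutions form a curve. A univariate eliminant over $\mathbb Q(u)$ therefore has degree equal to the number of points of that curve over a fixed $u$. In the paper's coordinates this curve is a quartic $F_\tau(p)=0$ in $p=r^2$, whose coefficients have degree up to $22$ in an auxiliary parameter $\tau$. On it, $u$ is a rational function whose numerator has degree $12$ in $\tau$. The eliminant is thus of high degree, with no reason to expect a solvable Galois group, and the paper never writes $p$ as a function of $u$. Without closed forms, ``checking radicands on $[2,\infty)$'' has nothing to act on. Numerical solutions at sample $u$ do not show that the real branch with $r>0$ persists for all $u\ge2$ without colliding with another root or degenerating.

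\textbf{The missing idea is to parametrize the solution curve by something other than $u$,} namely $\tau=-\sigma/r$ together with $p=r^2$.
\begin{enumerate}
\item Then $b=-(\tau^2+2)p$.
\item The three equations involving only $b,x,m,n,r,\sigma$ are solved explicitly by $r$ times rational functions of $(p,\tau)$. Their denominators divide $2\tau(\tau^2-2)$, which is harmless for $\tau\ge2$.
\item The $z^2$ entry gives $c$, affine in $u$.
\item The equation containing $bc$ is then linear in $u$ with coefficient $3b\neq0$, giving $u=U(p,\tau)$.
\item After this substitution, the last equation reduces to $F_\tau(p)=0$, which is independent of $u$.
\end{enumerate}

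Existence for every $u\ge2$ then comes from continuity, not formulas. For $\tau\ge2$ the coefficients of $F_\tau$ alternate in sign, which excludes negative roots. Its discriminant is negative, so it has exactly two simple real roots, and the smaller one $p_-(\tau)>0$ is continuous in $\tau$. An exact evaluation gives $U(p_-(2),2)<2$, and $\tau p_-(\tau)\to2$ gives $U(p_-(\tau),\tau)/\tau^2\to1$. The intermediate value theorem then yields $\tau$ with $U(p_-(\tau),\tau)=u$, and $r=\sqrt{p_-(\tau)}>0$.
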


\begin{proof}
For $\tau\ge2$, define
\[
       F_\tau(p)=A_4p^4+A_3p^3+A_2p^2+A_1p+A_0,
\]
where
\begin{align*}
A_0={}&16(\tau^8-\tau^6+14\tau^4-20\tau^2+8)^2,\\
A_1={}&-4\tau(\tau^2+2)
 \bigl(20\tau^{14}+53\tau^{12}-32\tau^{10}+396\tau^8\\
 &\hspace{35mm}{}-320\tau^6-336\tau^4+512\tau^2-192\bigr),\\
A_2={}&\tau^2(\tau^2+2)^2
 \bigl(12\tau^{14}+123\tau^{12}+240\tau^{10}+116\tau^8\\
 &\hspace{35mm}{}+960\tau^6-432\tau^4-256\tau^2+192\bigr),\\
A_3={}&-4\tau^7(\tau^2+2)^4
 (3\tau^6+14\tau^4+4\tau^2+40),\\
A_4={}&\tau^6(\tau^2+2)^6(3\tau^4+4\tau^2+12).
\end{align*}
For $\tau\ge2$ these coefficients have signs $+,-,+,-,+$ in
increasing degree.  For the only two non-obvious brackets, put
$q=\tau^2\ge4$ and group
\begin{align*}
&20q^7-32q^5+53q^6-320q^3+396q^4-336q^2+512q-192,\\
&12q^7+123q^6+240q^5+116q^4
  +(960q^3-432q^2-256q)+192;
\end{align*}
each displayed group is positive.  Thus $F_\tau(p)>0$ for $p<0$.

The discriminant $\Delta_\tau$ of $F_\tau$ is
\begin{align}
 \Delta_\tau={}&-5184\tau^{14}(\tau^2-2)^{12}(\tau^2+2)^{18}
 (2\tau^2-1)^2\notag\\
 &\quad\cdot(\tau^2-2\tau+2)^2(\tau^2+2\tau+2)^2H(\tau^2),
 \label{eq:discriminant}
\end{align}
where, on writing $z=q-4$,
\begin{align*}
H(q)={}&576z^{15}+42864z^{14}+1457244z^{13}+30074493z^{12}\\
&+421983272z^{11}+4268521080z^{10}+32174243296z^9\\
&+184016942896z^8+804672632576z^7+2686678018304z^6\\
&+6778308853760z^5+12646404948736z^4+16800286140416z^3\\
&+14875812689920z^2+7736848048128z+1732363472896.
\end{align*}
Hence \eqref{eq:discriminant} is strictly negative.  The quartic has exactly
two simple real roots; the sign pattern excludes negative roots.  Denote its
smaller positive root by $p_-(\tau)$.  It is continuous for $\tau\ge2$.

Set
\begin{align}
 U(p,\tau):={}&\bigl(3p^2\tau^{12}+14p^2\tau^{10}+24p^2\tau^8
 +48p^2\tau^6\notag\\
&+112p^2\tau^4+96p^2\tau^2-20p\tau^9-32p\tau^7-96p\tau^5\notag\\
&-128p\tau^3+192p\tau+8\tau^8-8\tau^6+112\tau^4
 -160\tau^2+64\bigr)\notag\\
&\bigm/\bigl(12\tau^2(\tau^2-2)^2(\tau^2+2)\bigr).
\label{eq:Uselector}
\end{align}
At $\tau=2$ one has
\begin{align*}
 F_2(2/5)&=\frac{39721984}{625}>0,&
 F_2(4/9)&=-35840<0,\\
 U(p,2)-2&=\frac{594p^2-282p+7}{18}.&&
\end{align*}
It follows that $2/5<p_-(2)<4/9$ and $U(p_-(2),2)<2$.

For $\tau^2\ge14$, direct substitution gives
\begin{align*}
 F_\tau(1/\tau)={}&3w^9+373w^8+20472w^7+651304w^6\\
 &+13240608w^5+178399616w^4+1593137664w^3\\
 &+9091884032w^2+30082498560w+43954733056>0,
\end{align*}
where $w=\tau^2-14$, while
\begin{align*}
 F_\tau(2/\tau)={}&-4(\tau^2-14)(3\tau^4+4\tau^2+4)\\
 &\quad\cdot(3\tau^{10}+24\tau^8+36\tau^6+104\tau^4\\
 &\hspace{42mm}{}-32\tau^2+32)<0
\end{align*}
when $\tau^2>14$.  Consequently
$1/\tau<p_-(\tau)<2/\tau$ for large $\tau$.  Substitution
$p=y/\tau$ gives, after division by $\tau^{18}$,
\[
       \tau^{-18}F_\tau(y/\tau)\longrightarrow3y^2(y-2)^2
       \quad\text{uniformly for }1\le y\le2.
\]
Thus $\tau p_-(\tau)\to2$, and \eqref{eq:Uselector} gives
\[
             \frac{U(p_-(\tau),\tau)}{\tau^2}\longrightarrow1.
\]
The continuous function $\tau\mapsto U(p_-(\tau),\tau)$ therefore starts
below $2$ and tends to infinity.  Its image contains $[2,\infty)$.

Fix the prescribed $u\ge2$ and choose $\tau\ge2$ and
$p=p_-(\tau)$ with $U(p,\tau)=u$.  Put $r=\sqrt p$, $\sigma=-\tau r$, and
\begin{align*}
b={}&-(\tau^2+2)p,\\
x={}&\frac{\tau r\{p(\tau^2+2)^2-4\tau\}}{2(\tau^2-2)},\\
m={}&\frac{r\{-p\tau(\tau^2+2)^2+8(\tau^2-1)\}}
              {2\tau(\tau^2-2)},\\
n={}&\frac{r\{p(\tau^2+2)^2-2\tau^3\}}{2(\tau^2-2)},\\
c={}&\frac{p\{-p\tau(\tau^2+2)^2-8+4\tau^2\}+6\tau u}{2\tau}.
\end{align*}
Exact coefficient comparison shows that \eqref{eq:twosos} is equivalent to
\begin{align*}
&b+2r^2+\sigma^2=0,\qquad
c-3u-x\sigma+2mr=0,\\
&b+2xr-m\sigma+\sigma n=0,\qquad
b^2+2x\sigma+4rn=0,\\
&bc+x^2+m^2+\sigma m+\sigma n+n^2=0,\\
&b^2+2x(m+n)+4r\sigma=0,\qquad
c^2-u^2+4x(m-n)=0.
\end{align*}
The displayed parameters solve the first four and the sixth equations.
The fifth is precisely $U(p,\tau)=u$; after this substitution the residual
in the seventh is
\[
 \frac{2F_\tau(p)}{9\tau^4(\tau^2-2)^4(\tau^2+2)^2}=0.
\]
This proves the identity.
\end{proof}

The preceding selector covers $u\ge2$.  The complementary interval admits
one matrix-polynomial certificate.  We write
\[
 \mathbf v(T)=(I,T,T^2,T^3,T^4)^\mathsf T,
 \qquad \mathbf w(T)=(I,T,T^2,T^3)^\mathsf T,
\]
and, for $d\ge1$,
\[
 V_{2,d}(u)=\begin{pmatrix}I_d\\uI_d\\u^2I_d\end{pmatrix},
 \qquad
 V_{1,d}(u)=\begin{pmatrix}I_d\\uI_d\end{pmatrix}.
\]

\begin{lemma}[Exact interval certificate]\label{lem:intervalcert}
There are Hermitian matrices
\begin{align*}
 S_0&\in M_{15}(\mathbb Q(\sqrt2)),&
 R_0&\in M_{10}(\mathbb Q(\sqrt2)),\\
 S_P&\in M_{12}(\mathbb Q(\sqrt2,i)),&
 R_P&\in M_8(\mathbb Q(\sqrt2,i)).
\end{align*}
which are positive semidefinite and have respective ranks $13,8,10,7$.
Put
\begin{align*}
 G_0(u)&=V_{2,5}(u)^*S_0V_{2,5}(u)
       +u(2-u)V_{1,5}(u)^*R_0V_{1,5}(u),\\
 G_P(u)&=V_{2,4}(u)^*S_PV_{2,4}(u)
       +u(2-u)V_{1,4}(u)^*R_PV_{1,4}(u),\\
 G_Q(u)&=\overline{G_P(u)}.
\end{align*}
For $0\le u\le2$, every $T=X+iY$, and
\[
 P=X+Y,\quad Q=X-Y,\quad Z=T^2,\quad
 D=Z^2+uZ+I,\quad N=Z^2-uZ+I,
\]
one has the exact hereditary identity
\begin{align}
 2D^*D-N^*N={}&\mathbf v(T)^*G_0(u)\mathbf v(T)
  +\mathbf w(T)^*G_P(u)P\mathbf w(T)\notag\\
 &+\mathbf w(T)^*G_Q(u)Q\mathbf w(T).
 \label{eq:interval-sos}
\end{align}
\end{lemma}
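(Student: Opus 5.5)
The statement is an existence claim backed by explicit data, so the proof will be a certificate plus a verification procedure. The plan has four steps: reduce the hereditary identity to finitely many matrix identities in $u$, exhibit the four matrices, check positivity exactly, and check the coefficient identity exactly.

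\emph{Step 1: reduction to scalar coefficient identities.} Both sides of \eqref{eq:interval-sos} are hereditary polynomials, that is, sums of terms $(T^*)^jW T^k$ with $W\in\{I,P,Q\}$. They must be expanded in a form that does not depend on the particular operator $T$. Since $X=(T+T^*)/2$ and $Y=(T-T^*)/(2i)$, we have
\[
P=\tfrac{1-i}{2}T+\tfrac{1+i}{2}T^*,\qquad Q=\tfrac{1+i}{2}T+\tfrac{1-i}{2}T^*.
\]
Hence every term on the right is a linear combination of words $(T^*)^jT^k$. The left side $2D^*D-N^*N$ is also a combination of such words, with $j,k\le 4$. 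Matching the coefficient of each $(T^*)^jT^k$ is sufficient for the identity to hold for every $T$, because the hereditary calculus is linear in these words. So \eqref{eq:interval-sos} reduces to a finite family of polynomial identities in $u$ relating the entries of $S_0,R_0,S_P,R_P$ to the coefficients of $2|z^4+uz^2+1|^2-|z^4-uz^2+1|^2$. Each identity has degree at most $4$ in $u$, since $V_{2,d}$ contributes degree $\le 2$ on each side and the $R$ terms carry the extra factor $u(2-u)$.

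\emph{Step 2: producing the matrices.} I would set up the Gram-matrix semidefinite feasibility problem: the unknowns are $S_0,R_0,S_P,R_P\succeq0$, $G_Q$ is tied to $G_P$ by entrywise conjugation, and the constraints are the linear equations from Step 1. I would solve this numerically. The conjugation symmetry should be imposed from the start; it reflects the symmetry $T\mapsto\bar T$ that exchanges $P$ and $Q$. I would then round the solution to an exact one over $\mathbb Q(\sqrt2)$, respectively $\mathbb Q(\sqrt2,i)$. Rounding works best when the numerical solution lies in the relative interior of a face. The ranks $13,8,10,7$ indicate forced kernels, which I would expect to come from the equality case $\sqrt2$ (the extremal value at $u$ where the constant is attained). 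I would therefore identify these kernel vectors exactly, restrict to their orthogonal complements, and round there. After rounding, the linear equations are restored exactly by projecting onto the affine solution space over the number field.

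\emph{Step 3: exact positivity.} Positive semidefiniteness with the stated ranks is checked by an exact $LDL^*$ factorization over the number field. One verifies that each pivot is positive, which reduces to sign checks of the form $a+b\sqrt2>0$, and that the remaining Schur complement vanishes identically; this gives the rank. Positivity of $G_0(u)$ and $G_P(u)$ for $0\le u\le2$ is not even needed as a separate step. It follows from the structure of the certificate, since $u(2-u)\ge0$ on the interval and the $S$, $R$ matrices are PSD.

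\emph{Step 4: exact coefficient check.} Finally, verify the polynomial identities of Step 1 exactly in $u$ by symbolic expansion.

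The main obstacle is Step 2: obtaining an exact rational rounding that stays PSD despite rank deficiency. I would handle it by explicitly computing the common kernel, which corresponds to zeros forced by the tight value $\sqrt2$, so that the rounding is done on strictly positive definite reduced blocks.
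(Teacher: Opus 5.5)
Your proposal is essentially the paper's proof. The matrices are exact data (the paper stores them in an ancillary file), positivity and the ranks $13,8,10,7$ are certified by exact $LDL^*$ pivots on full-rank principal compressions with rational sign tests for $a+b\sqrt2$, and, as in your Step~1, the identity reduces to matching the coefficient matrices $d_0d_0^*$, $3(d_0d_1^*+d_1d_0^*)$, $d_1d_1^*$, $0$, $0$ of $u^0,\dots,u^4$, where $d_0=(1,0,0,0,1)^\mathsf T$ and $d_1=(0,0,1,0,0)^\mathsf T$.

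One correction to your heuristic for locating the kernels: they come not from an extremal $\sqrt2$ configuration but from the degenerate endpoint $u=0$. There $D=N=T^4+I$, and the left side vanishes at the boundary points $z=e^{\pm i\pi/4}$. Accordingly, $\ker S_0$ is spanned by the zero-padded real and imaginary parts of $\mathbf v(e^{i\pi/4})$, and $\ker R_P$ by the zero-padded vector $\mathbf w(e^{i\pi/4})$, up to a unimodular factor.
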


\begin{proof}
The four matrices are given entry by entry in the ancillary file
\texttt{ancillary/symmetric\_u\_exact\_certificate.py}; every entry is an
exact element of the displayed number field.  We record the finite audit so
that the certificate is independently checkable.  Put $r=1/\sqrt2$ and
\begin{align*}
 v_1&=(1,r,0,-r,-1)^\mathsf T,&
 v_2&=(0,r,1,r,0)^\mathsf T,\\
 p&=(-r(1+i),-i,r(1-i),1)^\mathsf T.
\end{align*}
The prescribed kernels are
\begin{center}
\begin{tabular}{cclc}
\toprule
matrix&order&kernel basis&rank\\
\midrule
$S_0$&15&$v_1\oplus0_{10},\ v_2\oplus0_{10}$&13\\
$R_0$&10&$v_1\oplus0_5,\ v_2\oplus0_5$&8\\
$S_P$&12&$p\oplus0_8,\ \bar p\oplus0_8$&10\\
$R_P$&8 &$p\oplus0_4$&7\\
\bottomrule
\end{tabular}
\end{center}
On a full-rank principal compression of each matrix, exact symmetric
elimination gives respectively $13,8,10,7$ positive $LDL^*$ pivots in
$\mathbb Q(\sqrt2)$.  Positivity of an element $a+b\sqrt2$ is decided using
only rational comparisons (for opposite signs, compare $a^2$ with $2b^2$).
Thus the four matrices are positive semidefinite with the asserted ranks.
The ancillary verifier performs precisely these operations in exact SymPy
arithmetic; it uses no floating-point test.

It remains to check the identity.  Put
\[
 d_0=(1,0,0,0,1)^\mathsf T,
 \qquad d_1=(0,0,1,0,0)^\mathsf T.
\]
Coefficient comparison in powers of $u$ gives the five target matrices
\[
 d_0d_0^*,\qquad
 3(d_0d_1^*+d_1d_0^*),\qquad
 d_1d_1^*,\qquad0,\qquad0.
\]
Expanding the three terms on the right of
\eqref{eq:interval-sos} gives exactly the same five matrices.  This is a
finite equality in $\mathbb Q(\sqrt2,i)$, checked entrywise by the ancillary
verifier.  Hence \eqref{eq:interval-sos} is an exact polynomial identity.
Finally the Markov--Lukacs forms defining $G_0,G_P,G_Q$ are positive
semidefinite for $0\le u\le2$, proving the lemma.
\end{proof}

\begin{proof}[Proof of \cref{thm:twozero}]
Put $Z=T^2$, $D=Z^2+uZ+I$, and $N=Z^2-uZ+I$.  Spectral inclusion for the
numerical range and spectral mapping give
$\Sp(Z)\subset\{z:\operatorname{Re}z\ge0\}$.  Both zeros of
$z^2+uz+1$ lie in the open left half-plane, so $D$ is invertible.
Sector containment gives $P=X+Y\ge0$ and $Q=X-Y\ge0$.  If $u\ge2$, apply
\cref{lem:selector}; if $0<u\le2$, apply \cref{lem:intervalcert}.  In either
case the relevant identity has a positive right-hand side.  Congruence by
$D^{-1}$ yields
\[
                 2I-(ND^{-1})^*(ND^{-1})\ge0,
\]
which is \eqref{eq:twozero}.

For positive real half-plane zeros $s,t$, replace $T$ by
$T/(st)^{1/4}$ and take $u=(s+t)/\sqrt{st}\ge2$.  For a conjugate pair
$\zeta=a+ib$, $\bar\zeta$, with $a>0$, replace $T$ by
$T/\sqrt{|\zeta|}$ and take $u=2a/|\zeta|\in(0,2]$.  Indeed, with
$b_\zeta(z)=(z-\zeta)/(z+\bar\zeta)$,
\[
 b_\zeta(z)b_{\bar\zeta}(z)
   =\frac{z^2-2az+|\zeta|^2}{z^2+2az+|\zeta|^2}.
\]
This proves both stated Blaschke-product consequences.
\end{proof}

\subsection{The complete imaginary post-automorphism diameter}

Let $J_4$ denote the $4\times4$ reversal matrix.  The following exact
matrix certificate is the core of \cref{thm:imagdiam}.

\begin{lemma}[Exact imaginary-diameter certificate]
\label{lem:imagcert}
Put
\[
       a=\frac12\sqrt{1+\sqrt2},\qquad \mathbb K=\mathbb Q(i,a).
\]
There are positive semidefinite Hermitian matrices
\begin{align*}
 S_0&\in M_{15}(\mathbb K),& R_0&\in M_{10}(\mathbb K),\\
 S_P&\in M_{12}(\mathbb K),& R_P&\in M_8(\mathbb K),
\end{align*}
of respective ranks $11,8,8,7$.  For $0\le t\le1$, set
\begin{align*}
 G_0(t)&=V_{2,5}(t)^*S_0V_{2,5}(t)
       +t(1-t)V_{1,5}(t)^*R_0V_{1,5}(t),\\
 G_P(t)&=V_{2,4}(t)^*S_PV_{2,4}(t)
       +t(1-t)V_{1,4}(t)^*R_PV_{1,4}(t),\\
 G_Q(t)&=J_4G_P(t)J_4.
\end{align*}
If $T=X+iY$, $P=X+Y$, $Q=X-Y$, and
\begin{align*}
 D_t&=(T^2+I)^2+it(T^2-I)^2,\\
 N_t&=(T^2-I)^2-it(T^2+I)^2,
\end{align*}
then the hereditary identity
\begin{align}
 2D_t^*D_t-N_t^*N_t={}&
 \mathbf v(T)^*G_0(t)\mathbf v(T)
 +\mathbf w(T)^*G_P(t)P\mathbf w(T)\notag\\
 &+\mathbf w(T)^*G_Q(t)Q\mathbf w(T)
 \label{eq:imag-sos}
\end{align}
holds exactly.
\end{lemma}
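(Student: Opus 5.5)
The plan follows the template of \cref{lem:intervalcert}: exhibit the four matrices explicitly, certify their positivity and ranks by exact arithmetic in $\mathbb K$, and verify \eqref{eq:imag-sos} by coefficient comparison in powers of $t$.

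\emph{Reduction to finitely many matrix identities.} Expanding $2D_t^*D_t-N_t^*N_t$ in the hereditary basis $\mathbf v(T)^*(\cdot)\mathbf v(T)$ gives a $5\times5$ matrix of degree at most two in $t$. Write $e=(1,0,2,0,1)^\mathsf T$ and $f=(1,0,-2,0,1)^\mathsf T$ for the coefficient vectors of $(T^2+I)^2$ and $(T^2-I)^2$. The target is then
\[
 2(e+itf)(e+itf)^*-(f-ite)(f-ite)^*,
\]
whose coefficients are $2ee^*-ff^*$, then $-2i(ef^*-fe^*)-i(ef^*-fe^*)$ (a Hermitian matrix), then $2ff^*-ee^*$.

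The terms $\mathbf w^*G_PP\mathbf w$ and $\mathbf w^*G_QQ\mathbf w$ must be rewritten in the same basis. Since $P=X+Y$ with $X=(T+T^*)/2$ and $Y=(T-T^*)/(2i)$, we have $P=\tfrac12\bigl((1-i)T+(1+i)T^*\bigr)$. Hence a form $\mathbf w^*GP\mathbf w$ contributes shifted copies of $G$, multiplied by $(1\mp i)/2$, placed in the off-diagonal bands of a $5\times5$ hereditary matrix. Expanding $G_0,G_P$ through $V_{2,d}$ and $V_{1,d}$ makes the right side a polynomial of degree at most $2$ in $t$, with the $t(1-t)$ pieces included. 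Because $t(1-t)$ raises the degree to $3$ once multiplied by the linear block $V_{1}$, the cubic coefficient must cancel among the $R$-blocks. Matching coefficients then gives a finite list of linear equations in the entries of $S_0,R_0,S_P,R_P$, of the same type as the five target matrices in \cref{lem:intervalcert}.

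The symmetry $G_Q=J_4G_PJ_4$ reflects the substitution $T\mapsto\bar{\ }$-reversal that exchanges $P$ and $Q$. This replaces the complex conjugation used on the real diameter, since $it$ is not real.

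\emph{Positivity and ranks.} Next, list the kernel vectors of each matrix, exactly as in the table of \cref{lem:intervalcert}. Plausibly these come from the common zeros of the boundary value $2|D_t|^2-|N_t|^2$ on the rays $\arg z=\pm\pi/4$, which are where the sum of squares must vanish. The number $a=\tfrac12\sqrt{1+\sqrt2}$ suggests that these zeros involve $\cos(\pi/8)$-type quantities. On a complementary principal compression, exact $LDL^*$ elimination should produce $11,8,8,7$ positive pivots in $\mathbb K\cap\mathbb R=\mathbb Q(a)$. Positivity of each pivot is decided by rational comparisons, using the minimal polynomial $16a^4-8a^2-1=0$ and isolating the real root. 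This step certifies semidefiniteness and the stated ranks.

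Finally, the Markov--Lukács structure makes $G_0(t),G_P(t),G_Q(t)\ge0$ for $0\le t\le1$. This follows because $V^*SV\ge0$ and $t(1-t)\ge0$ on that interval, and the $J_4$ conjugation preserves positivity.

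\emph{Main obstacle.} The main difficulty is constructing the certificate, not verifying it: one must find the entries in $\mathbb K$ that satisfy the linear equations with the prescribed kernels. I would first solve the semidefinite feasibility problem numerically, with kernels forced by the boundary zeros. I would then impose those kernels exactly, round the remaining free parameters to elements of $\mathbb K$, and project onto the affine solution space. The reduced-rank faces are essential here, because strict interiority fails where the target bound is sharp, namely at $t=0,1$. Once the matrices are fixed, the identity check and the pivot signs are finite exact computations, carried out as in the ancillary verifier.
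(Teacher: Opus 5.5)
Your strategy is the paper's: Hermitian unknowns of orders $15,10,12,8$, the ansatz $G_Q=J_4G_PJ_4$, prescribed kernels, and an exact affine system whose free coordinates are rounded and whose pivot coordinates are solved exactly. Positivity then comes from exact $LDL^*$ over $\mathbb K\cap\mathbb R=\mathbb Q(a)$, with root isolation for $16a^4-8a^2-1$. However, the coefficient system you describe is set up incorrectly in two places, so as written it would certify a different identity.

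First, the degree-one target is wrong. In the basis of the lemma, $p(T)^*p(T)=\mathbf v(T)^*(\bar p\,p^{\mathsf T})\mathbf v(T)$, not $pp^*$. Even in your convention, the $t$-coefficient of $2(e+itf)(e+itf)^*-(f-ite)(f-ite)^*$ is $-2i(ef^{\mathsf T}-fe^{\mathsf T})+i(ef^{\mathsf T}-fe^{\mathsf T})=-i(ef^{\mathsf T}-fe^{\mathsf T})$, not $-3i(\cdots)$; you reversed the sign of the $N_t$ contribution. The correct target is $i(\bar{\mathbf d}\mathbf n^{\mathsf T}-\bar{\mathbf n}\mathbf d^{\mathsf T})$. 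Since this matrix is not real, both the factor and the transposition matter.

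Second, the degree count is wrong. Both $V_{2,d}(t)^*SV_{2,d}(t)$ and $t(1-t)V_{1,d}(t)^*RV_{1,d}(t)$ are quartic in $t$, so the right side of \eqref{eq:imag-sos} has degree four, not two. You must equate five coefficient matrices, with the $t^3$ and $t^4$ ones set to zero, exactly as in the five targets of \cref{lem:intervalcert}. These cancellations involve $S$- and $R$-blocks together, not only $R$-blocks. For instance, the $t^4$ coefficient of $G_0$ is the $t^2\cdot t^2$ block of $S_0$ minus the $t\cdot t$ block of $R_0$.

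Your account of where the kernels come from is also off. For $0\le t<1$ one has $|h_{it}(w)|\le1$ on the closed sector, with equality on the rays, and $D_t$ has no zero there. So the scalar value of the left side is at least $|D_t|^2>0$: there are no boundary zeros to force kernels, and nothing is imposed at $t=0$.

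Every kernel vector in the paper has the form $V_{2,d}(1)q$ or $V_{1,d}(1)q$, so all of them are read off at $t=1$. The mechanism is the degeneration $N_1=-iD_1$, which makes the target at $t=1$ the rank-one matrix $\bar{\mathbf d}_1\mathbf d_1^{\mathsf T}$ with $\mathbf d_1=\mathbf d+i\mathbf n$. The paper imposes the endpoint face $V_{2,5}(1)^*S_0V_{2,5}(1)=\bar{\mathbf d}_1\mathbf d_1^{\mathsf T}$ and $V_{2,4}(1)^*S_PV_{2,4}(1)=0$. Positivity then forces $(q,q,q)\in\ker S_0$ for $q\in\ker\mathbf d_1^{\mathsf T}$, and $(e_j,e_j,e_j)\in\ker S_P$. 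The paper adds the explicit kernels $(q_1,q_1),(q_2,q_2)$ of $R_0$ and $(p,p)$ of $R_P$.

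With these corrections, your search, round and project procedure is what the paper's exact affine system carries out.
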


\begin{proof}
We give the finite audit because the matrices are most compactly recorded by
exact affine coordinates.  Their machine-readable data are in
\texttt{ancillary/imaginary\_interval\_certificate.py}, and the independent
reconstruction is
\texttt{ancillary/verify\_imaginary\_interval.py}.

Write
\[
 \mathbf d=(1,0,2,0,1)^\mathsf T,
 \qquad \mathbf n=(1,0,-2,0,1)^\mathsf T.
\]
The coefficient matrices of $2D_t^*D_t-N_t^*N_t$ in degrees $0,1,2$ are
\begin{equation}\label{eq:imagtargets}
 2\bar{\mathbf d}\mathbf d^\mathsf T-
       \bar{\mathbf n}\mathbf n^\mathsf T,\quad
 i(\bar{\mathbf d}\mathbf n^\mathsf T-
       \bar{\mathbf n}\mathbf d^\mathsf T),\quad
 -\bar{\mathbf d}\mathbf d^\mathsf T+
       2\bar{\mathbf n}\mathbf n^\mathsf T.
\end{equation}
Starting with Hermitian indeterminates of orders $15,10,12,8$, expand the
two Markov--Lukacs forms in the statement, use $G_Q=J_4G_PJ_4$, and equate
all hereditary coefficients through degree four to
\eqref{eq:imagtargets} and two zero matrices.  At $t=1$ impose the natural
endpoint face
\begin{align*}
 V_{2,5}(1)^*S_0V_{2,5}(1)
   &=\bar{\mathbf d}_1\mathbf d_1^\mathsf T,
 &V_{2,4}(1)^*S_PV_{2,4}(1)&=0,\\
 \mathbf d_1&=(1+i,0,2-2i,0,1+i)^\mathsf T.&&
\end{align*}
Finally impose the following kernel dimensions:
\begin{center}
\begin{tabular}{cclc}
\toprule
matrix&order&kernel vectors&rank\\
\midrule
$S_0$&15&$(q,q,q)$ for four independent $\mathbf d_1^\mathsf Tq=0$&11\\
$R_0$&10&$(q_1,q_1),(q_2,q_2)$&8\\
$S_P$&12&$(e_j,e_j,e_j)$, $0\le j\le3$&8\\
$R_P$&8&$(p,p)$&7\\
\bottomrule
\end{tabular}
\end{center}
Here $e_j$ are the coordinate vectors and, with
\[
 \eta=\frac12\left(\sqrt{1+\sqrt2}
              +i\sqrt{\sqrt2-1}\right),\quad
 \rho=\sqrt2-1,\quad
 \delta=\sqrt{\frac{\sqrt2-1}{2}},
\]
one may take
\begin{align*}
q_1&=(i,\eta,1,\eta,i)^\mathsf T,\\
q_2&=(1,\bar\eta/\sqrt2,0,-\bar\eta/\sqrt2,-1)^\mathsf T,\\
p&=(-\rho\delta(1+i),-i\rho,\delta(1-i),1)^\mathsf T.
\end{align*}
For the first row of the table the verifier uses
\[
 (1,0,0,0,-1)^\mathsf T,\quad(2i,0,1,0,0)^\mathsf T,\quad
 e_1,\quad e_3.
\]
All these entries lie in $\mathbb K$, since
$16a^4-8a^2-1=0$ and the remaining real radicals are rational functions of
$a$.

After real and imaginary parts are separated, the construction is a sparse
$438\times534$ exact affine system of rank $328$.  The ancillary certificate
lists its $205$ free indices and integer numerators, with common denominator
$10^4$; exact row reduction determines every pivot coordinate.  Substitution
gives zero in all $438$ equations, which proves \eqref{eq:imag-sos}.

It remains only to certify positivity.  The displayed kernels are
independent.  On full-rank principal compressions the verifier performs
exact $LDL^*$ elimination.  The selected orders and the smallest pivots are
shown below; the decimal column is only a readable display of an exact sign
calculation.
\begin{center}
\small
\begin{tabular}{cclc}
\toprule
matrix&order&principal indices (zero based)&smallest pivot\\
\midrule
$S_0$&11&$2,12,5,9,11,13,7,1,3,0,4$&$0.127165\ldots$\\
$R_0$&8 &$2,1,6,8,7,3,0,4$&$0.146820\ldots$\\
$S_P$&8 &$2,1,4,6,9,7,8,11$&$0.139901\ldots$\\
$R_P$&7 &$1,2,5,6,4,0,7$&$0.166670\ldots$\\
\bottomrule
\end{tabular}
\end{center}
No floating-point sign decision is used.  Every real pivot is reduced to a
cubic polynomial in $a$.  Rational endpoints first isolate the positive
root of $16x^4-8x^2-1$ (the polynomial is strictly increasing for
$x>1/2$); rational interval Horner evaluation then gives a strictly positive
lower endpoint for every pivot.  Thus the four matrices are positive
semidefinite with exactly the ranks in the statement, completing the exact
audit.  If $\Pi_0$ is the orthogonal projection onto $\ker S_0$, the same
exact elimination on the order-$11$ compression also gives the quantitative
margin
\begin{equation}\label{eq:S0margin}
                    S_0\ge\frac1{10}(I-\Pi_0).
\end{equation}
The smallest pivot in this additional audit is $0.069496\ldots$; once again
its strictly positive rational interval, rather than the decimal display,
is what proves the assertion.
\end{proof}

\begin{proof}[Proof of \cref{thm:imagdiam}]
First suppose $0\le t<1$.  Sector containment gives $P,Q\ge0$, so
\cref{lem:imagcert} makes the right side of \eqref{eq:imag-sos} positive.
Put $B=T^2$ and $w=(B-I)(B+I)^{-1}$.  Spectral inclusion puts $\Sp(B)$ in
the closed right half-plane, hence every scalar spectral value of $w$
has modulus at most one.  Since $t<1$, spectral mapping shows that
$I+itw^2$ is invertible; equivalently, $D_t$ is invertible.  Congruence of
\eqref{eq:imag-sos} by $D_t^{-1}$ gives
\[
                  2I-(N_tD_t^{-1})^*(N_tD_t^{-1})\ge0,
\]
and proves \eqref{eq:imagdiam} for $0\le t<1$.  At $t=1$ one has
$N_1=-iD_1$, so the reduced rational function is the constant $-i$.

For $-1\le t<0$, apply the result for $-t$ to $T^*$.  Directly from
\eqref{eq:hc},
\[
                       h_c(T^2)^*=h_{\bar c}((T^*)^2),
\]
and $W(T^*)\subset\cS_{\pi/4}$.  The endpoint $t=-1$ similarly reduces to
the constant $i$.  This proves the full diameter.
\end{proof}

\begin{proof}[Proof of \cref{thm:cusp}]
It suffices by the adjoint symmetry in the preceding proof to take
$c=x+it$ with $0\le t\le1$ and $x\in\mathbb R$.  Let
\[
          E=\ker\mathbf d_1^\mathsf T\subset\mathbb C^5.
\]
The kernel of $S_0$ in \cref{lem:imagcert} is
$\{(q,q,q):q\in E\}$.  For $\xi\in\mathbb C^5$ put
$y=(\xi,t\xi,t^2\xi)^\mathsf T$.  Orthogonally decomposing $\xi$ into
$E\oplus E^\perp$ gives
\begin{align*}
 \dist(y,\ker S_0)^2
 &\ge \left(1+t^2+t^4-\frac{(1+t+t^2)^2}{3}\right)\norm{\xi}^2\\
 &=\frac23(1-t)^2(1+t+t^2)\norm{\xi}^2
 \ge\frac23(1-t)^2\norm{\xi}^2.
\end{align*}
Combining this with \eqref{eq:S0margin} and $R_0\ge0$ yields
\begin{equation}\label{eq:G0cusp}
                         G_0(t)\ge\frac1{15}(1-t)^2I.
\end{equation}

Put
\[
 \mathbf d=(1,0,2,0,1)^\mathsf T,\qquad
 \mathbf n=(1,0,-2,0,1)^\mathsf T,
\]
and put
\[
 D_c=(T^2+I)^2+c(T^2-I)^2,\qquad
 N_c=(T^2-I)^2+\bar c(T^2+I)^2.
\]
Let $H(c)$ be the coefficient matrix of $2D_c^*D_c-N_c^*N_c$ in the
basis $\mathbf v(T)$.  Direct expansion gives
\[
 H(x+it)-H(it)
 =x(\bar{\mathbf d}\mathbf n^\mathsf T
       +\bar{\mathbf n}\mathbf d^\mathsf T)
  +x^2(2\bar{\mathbf n}\mathbf n^\mathsf T
       -\bar{\mathbf d}\mathbf d^\mathsf T).
\]
Because $\norm{\mathbf d}=\norm{\mathbf n}=\sqrt6$,
\begin{equation}\label{eq:cusppert}
                \norm{H(x+it)-H(it)}\le12|x|+18x^2.
\end{equation}
If $|x|\le(1-t)^2/200$, then \cref{eq:G0cusp,eq:cusppert} give
\begin{align*}
G_0(t)+H(x+it)-H(it)
&\ge\left(\frac1{15}-\frac{12}{200}
                 -\frac{18}{200^2}\right)(1-t)^2I\\
&=\frac{373}{60000}(1-t)^2I.
\end{align*}
For $t<1$ this is positive.  Replace $G_0(t)$ by this perturbed ordinary
Gram matrix in \eqref{eq:imag-sos}; the two localized Grams are unchanged.
Condition \eqref{eq:cusp} also gives $|c|<1$ away from the tips, so the same
spectral-mapping argument makes the denominator in \eqref{eq:cuspbound}
invertible.  Congruence proves the estimate.  At $t=1$ condition
\eqref{eq:cusp} forces $x=0$, which is the already treated constant endpoint.
\end{proof}

\subsection{A full complex parameter disk}

We prove \cref{thm:centraldisk} by one bivariate matrix Putinar
certificate.  Order the monomials of total degree at most $k$ as
\[
 \mathbf m_k(x,y)=(x^jy^{d-j}:0\le d\le k,\ 0\le j\le d)^\mathsf T
\]
and put $\mathcal V_{k,m}(x,y)=\mathbf m_k(x,y)\otimes I_m$.

\begin{lemma}[Exact complex-disk certificate]\label{lem:complexdiskcert}
There are positive definite rational Hermitian matrices
\begin{align*}
 S_0&\in M_{50}(\mathbb Q(i)),&R_0&\in M_{30}(\mathbb Q(i)),\\
 S_P&\in M_{40}(\mathbb Q(i)),&R_P&\in M_{24}(\mathbb Q(i))
\end{align*}
with the following property.  If $c=x+iy$, $x,y\in\mathbb R$, set
\begin{align*}
 G_0(x,y)&=\mathcal V_{3,5}(x,y)^*S_0\mathcal V_{3,5}(x,y)\\
 &\quad+\left(\frac{361}{400}-x^2-y^2\right)
          \mathcal V_{2,5}(x,y)^*R_0\mathcal V_{2,5}(x,y),\\
 G_P(x,y)&=\mathcal V_{3,4}(x,y)^*S_P\mathcal V_{3,4}(x,y)\\
 &\quad+\left(\frac{361}{400}-x^2-y^2\right)
          \mathcal V_{2,4}(x,y)^*R_P\mathcal V_{2,4}(x,y),\\
 G_Q(x,y)&=\overline{G_P(x,-y)}.
\end{align*}
For $T=X+iY$, $P=X+Y$, $Q=X-Y$, and
\begin{align*}
 D_c&=(T^2+I)^2+c(T^2-I)^2,\\
 N_c&=(T^2-I)^2+\bar c(T^2+I)^2,
\end{align*}
one has the exact hereditary identity
\begin{align}
2D_c^*D_c-N_c^*N_c={}&
 \mathbf v(T)^*G_0(x,y)\mathbf v(T)
 +\mathbf w(T)^*G_P(x,y)P\mathbf w(T)\notag\\
 &+\mathbf w(T)^*G_Q(x,y)Q\mathbf w(T).
\label{eq:complex-disk-sos}
\end{align}
\end{lemma}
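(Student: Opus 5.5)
The statement is an existence claim for an exact certificate, so the proof will be an exact audit in the style of \cref{lem:intervalcert,lem:imagcert}. The four matrices will be exhibited entry by entry, and the lemma then reduces to two finite checks: the hereditary coefficient identity, and positivity.

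\textbf{Identity.} First compute the target. Expanding $D_c=\mathbf d^\mathsf T\mathbf v(T)+c\,\mathbf n^\mathsf T\mathbf v(T)$ and $N_c=\mathbf n^\mathsf T\mathbf v(T)+\bar c\,\mathbf d^\mathsf T\mathbf v(T)$ gives the coefficient matrix
\[
 H(c)=2(\bar{\mathbf d}+\bar c\bar{\mathbf n})(\mathbf d+c\mathbf n)^\mathsf T
   -(\bar{\mathbf n}+c\bar{\mathbf d})(\mathbf n+\bar c\mathbf d)^\mathsf T .
\]
Its entries are polynomials of degree at most two in $x,y$. Expanding $P=(T+T^*)/2+(T-T^*)/(2i)$ and $Q$ similarly, each localized term $\mathbf w(T)^*G_P P\mathbf w(T)$ becomes a hereditary polynomial in $T^*,T$ of bidegree at most $(4,4)$. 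The identity \eqref{eq:complex-disk-sos} is therefore an equality of $5\times5$ coefficient matrices whose entries are polynomials in $(x,y)$ of degree at most six.

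Two reductions cut down the unknowns. The symmetry $G_Q(x,y)=\overline{G_P(x,-y)}$ implements the conjugation $T\mapsto$ coefficientwise conjugate together with $c\mapsto\bar c$, which interchanges $P$ and $Q$. This halves the localized unknowns and forces consistency of the real and imaginary parts. Comparing coefficients of every monomial $x^jy^k$ then gives a linear system in the Hermitian entries of $S_0,R_0,S_P,R_P$. I would solve this system as in \cref{lem:imagcert}: fix a rational point in the interior of the PSD feasible set, found numerically by an SDP with objective chosen to maximise the minimum eigenvalue, then round to rationals with a small denominator, and finally restore exact feasibility by exact row reduction on the pivot coordinates. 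The ancillary verifier then substitutes and checks that every equation vanishes in $\mathbb Q(i)$.

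\textbf{Positivity.} Positive definiteness, rather than mere semidefiniteness, is what makes rounding robust. In \cref{lem:imagcert} there were forced kernels from the endpoint face $t=1$. Here the parameter region is the disk $x^2+y^2\le361/400$, which stays strictly inside $\mathbb D$ and away from the degenerate constants at $|c|=1$. So I expect a strictly feasible interior certificate to exist and no kernel structure needs to be imposed. Positivity is then certified by exact $LDL^*$ elimination over $\mathbb Q(i)$, with all pivots rational and positive. Given the identity, the Putinar form makes $G_0,G_P,G_Q$ pointwise PSD on the closed disk $|c|\le19/20$, since the multiplier $361/400-x^2-y^2$ is nonnegative there. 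This is all the lemma asserts; \cref{thm:centraldisk} then follows by congruence with $D_c^{-1}$, as in the proof of \cref{thm:imagdiam}.

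\textbf{Main obstacle.} The hard step is finding the certificate numerically with enough strict margin that rationalisation survives. The radius $19/20$ is presumably close to where the degree-$3$ and degree-$2$ Putinar forms stop being feasible. If the margin is too thin, I would first raise the denominator of the rounding. Failing that, I would project the rounded point onto the affine space and re-check the pivots, and only then consider increasing the monomial degree.
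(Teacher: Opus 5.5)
Your proposal is correct and follows essentially the paper's route. The paper also finds a strictly feasible numerical seed with positive definite Putinar Grams; there are no forced kernels because $|c|\le19/20$ stays away from the degenerate faces at $|c|=1$. It then removes the rounding residual exactly in $\mathbb Q(i)$ and audits positivity exactly; the only differences are in mechanics. The paper removes the residual monomial by monomial, applying a fixed rational right inverse and lifting the correction into $S_0,S_P$; this lift is exact and local because every degree-$\le6$ parameter monomial is a product of two degree-$\le3$ ones. It certifies positivity by an approximate-factorization bound $\lambda_{\min}(A)\ge\ell-\rho$ rather than by exact $LDL^*$. Since the certified margins are as small as roughly $10^{-7}$, the seed must be rounded at a fine scale (the paper uses denominator $10^{12}$), not with a small denominator.
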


\begin{proof}
Put
\[
 \mathbf d=(1,0,2,0,1)^\mathsf T,
 \qquad \mathbf n=(1,0,-2,0,1)^\mathsf T.
\]
The coefficient matrices of $2D_c^*D_c-N_c^*N_c$ at the parameter
monomials $1,x,y,x^2,xy,y^2$ are, respectively,
\begin{align}\label{eq:complex-disk-targets}
 &2\bar{\mathbf d}\mathbf d^\mathsf T
      -\bar{\mathbf n}\mathbf n^\mathsf T,\qquad
 \bar{\mathbf d}\mathbf n^\mathsf T
      +\bar{\mathbf n}\mathbf d^\mathsf T,\qquad
 i(\bar{\mathbf d}\mathbf n^\mathsf T
      -\bar{\mathbf n}\mathbf d^\mathsf T),\notag\\
 &2\bar{\mathbf n}\mathbf n^\mathsf T
      -\bar{\mathbf d}\mathbf d^\mathsf T,\qquad
 0,\qquad
 2\bar{\mathbf n}\mathbf n^\mathsf T
      -\bar{\mathbf d}\mathbf d^\mathsf T.
\end{align}
Expand the four matrix Putinar terms in the statement and equate their
hereditary output to \eqref{eq:complex-disk-targets}, with
$G_Q(x,y)=\overline{G_P(x,-y)}$.  The integer seed file stores the Hermitian
coordinates of the four constant Gram matrices with common denominator
$10^{12}$.  Rounding alone need not preserve the affine equations, so the
verifier removes its residual exactly.  For a parameter monomial $x^jy^k$,
it applies a rational right inverse of the fixed finite-dimensional map
\[
 (H_0,H_P)\longmapsto
 \operatorname{Her}\bigl(H_0,H_P,(-1)^k\overline{H_P}\bigr)
\]
and lifts the resulting correction into $S_0$ and $S_P$.  Every monomial of
degree at most six is a product of two monomials of degree at most three, so
this lift is exact.  A fresh expansion gives precisely the six matrices in
\eqref{eq:complex-disk-targets} and zero at the other $22$ parameter
monomials.

We also record the exact positivity audit.  For each corrected matrix $A$,
the verifier constructs rational matrices $L,M$ and puts
\[
 R=A-LL^*,\qquad E=I-ML.
\]
Set
\[
 \epsilon=\sqrt{\norm{E}_1\norm{E}_\infty},\quad
 \mu^2=\norm{M}_1\norm{M}_\infty,\quad
 \ell=\frac{(1-\epsilon)^2}{\mu^2},\quad
 \rho=\sqrt{\norm{R}_1\norm{R}_\infty}.
\]
The square roots are replaced in the computation by rational upper bounds.
Since $ML=I-E$, the inequalities $\epsilon<1$ and $\ell>\rho$ imply
$\lambda_{\min}(A)\ge\ell-\rho>0$.  The exact rational comparisons, with
decimal displays, are
\begin{center}
\begin{tabular}{c|rrrr}
\toprule
matrix&$S_0$&$R_0$&$S_P$&$R_P$\\
\midrule
order&50&30&40&24\\
$\ell$&$9.27\mathord\cdot10^{-8}$&$2.10\mathord\cdot10^{-6}$
       &$8.12\mathord\cdot10^{-8}$&$1.15\mathord\cdot10^{-5}$\\
$\rho$&$1.63\mathord\cdot10^{-11}$&$3.70\mathord\cdot10^{-12}$
       &$8.29\mathord\cdot10^{-12}$&$4.25\mathord\cdot10^{-12}$\\
\bottomrule
\end{tabular}
\end{center}
The decimals only display certified rational bounds.  The independent
verifier reads no floating-point discovery Gram: it reconstructs the exact
affine point from the integer seed, checks all $28$ hereditary coefficient
matrices, and repeats the four positivity comparisons.  This proves the
lemma.
\end{proof}

\begin{proof}[Proof of \cref{thm:centraldisk}]
Write $c=x+iy$.  Under \eqref{eq:cdisk}, each of the three matrices
$G_0(x,y),G_P(x,y),G_Q(x,y)$ in \cref{lem:complexdiskcert} is positive
semidefinite.  Sector containment gives $P,Q\ge0$, so the right side of
\eqref{eq:complex-disk-sos} is positive.

Put $B=T^2$ and $w=(B-I)(B+I)^{-1}$.  Spectral inclusion puts
$\Sp(B)$ in the closed right half-plane, hence $\Sp(w)\subset\overline{\mathbb
D}$.  Since $|c|\le19/20<1$, the operator $I+cw^2$ is invertible; equivalently,
$D_c$ is invertible.  Congruence of \eqref{eq:complex-disk-sos} by
$D_c^{-1}$ proves \eqref{eq:centraldisk}.  At $c=i/2$, direct factorization
of the numerator gives
\[
 (1-i/2)\left(z-(1+2i)\right)
          \left(z-\frac{1-2i}{5}\right),
\]
which proves \eqref{eq:nonsymzeros}.
\end{proof}

\subsection{A boundary-reaching phase arc}

We now prove \cref{thm:phasewedge}.  The proof has three finite exact parts:
a rational seed certificate identifying the forced faces, a rational
interpolation audit showing that $72$ selected coefficient equations imply
the full hereditary identity, and six matrix Bernstein certificates giving
strict rational sections in two complementary row charts.
Put $R=19/20$ and retain the vectors
\[
 \mathbf d=(1,0,2,0,1)^\mathsf T,
 \qquad \mathbf n=(1,0,-2,0,1)^\mathsf T.
\]

\begin{lemma}[Exact radial seed and continuation]\label{lem:radialcontinue}
For $1/10\le t\le18/25$ and $R\le\rho\le1$, there are positive semidefinite
Hermitian matrices $G_0(\rho,t)\in M_5(\C)$ and
$G_P(\rho,t),G_Q(\rho,t)\in M_4(\C)$ for which
\begin{align}
 2D_{\rho,t}^*D_{\rho,t}-N_{\rho,t}^*N_{\rho,t}
  ={}&\mathbf v(T)^*G_0(\rho,t)\mathbf v(T)\notag\\
 &+\mathbf w(T)^*G_P(\rho,t)P\mathbf w(T)
  +\mathbf w(T)^*G_Q(\rho,t)Q\mathbf w(T),
 \label{eq:radialher}
\end{align}
where
\begin{align*}
 D_{\rho,t}&=(T^2+I)^2+\rho\zeta_t(T^2-I)^2,\\
 N_{\rho,t}&=(T^2-I)^2+\rho\bar\zeta_t(T^2+I)^2.
\end{align*}
The matrices may be chosen continuously in $t$.
\end{lemma}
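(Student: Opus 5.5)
The plan is to follow the template of \cref{lem:imagcert,lem:complexdiskcert}: build an explicit matrix certificate in the two parameters $(\rho,t)$, prove the coefficient identities exactly, and then prove positivity by an exact audit on the rectangle $[R,1]\times[1/10,18/25]$. The paragraph before the lemma announces three ingredients, and the plan uses them in that order.

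\emph{Step 1: target and forced faces.} First compute the coefficient matrices of $2D_{\rho,t}^*D_{\rho,t}-N_{\rho,t}^*N_{\rho,t}$ in the basis $\mathbf v(T)$. They are
\[
 2\bar{\mathbf d}\mathbf d^\mathsf T-\bar{\mathbf n}\mathbf n^\mathsf T
 +\rho\bigl(\zeta_t\bar{\mathbf d}\mathbf n^\mathsf T+\bar\zeta_t\bar{\mathbf n}\mathbf d^\mathsf T\bigr)
 +\rho^2\bigl(2\bar{\mathbf n}\mathbf n^\mathsf T-\bar{\mathbf d}\mathbf d^\mathsf T\bigr).
\]
Since $\zeta_t$ is a rational point of the unit circle (its form is a Cayley parametrization in $s_t$), every entry is rational in $(\rho,t)$. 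At $\rho=1$ the quotient degenerates to a unimodular constant, exactly as at $t=1$ in \cref{lem:imagcert}. This forces a face: $G_P(1,t)=G_Q(1,t)=0$, and $G_0(1,t)$ is a rank-one term $\bar{\mathbf d}_t\mathbf d_t^\mathsf T$ coming from the factor $D_{1,t}$.

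I would find a rational seed certificate at a sample $t$ by floating SDP, read off the kernels it is forced to have, and impose these kernels exactly. Natural candidates are analogues of $q_1,q_2,p$ and of the vectors $(q,q,q)$ with $\mathbf d_t^\mathsf Tq=0$. This removes the degenerate directions that would otherwise keep positivity from being strict near $\rho=1$.

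\emph{Step 2: the affine identity.} Parametrize $G_0,G_P,G_Q$ as Markov--Lukacs (or Bernstein) polynomial matrices in $\rho$ on $[R,1]$ and in $t$ on $[1/10,18/25]$, with $G_Q$ tied to $G_P$ by conjugation or reversal symmetry. The hereditary identity is then a finite affine system in the unknown coefficients.

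Rather than verify it symbolically in $t$, I would choose $72$ coefficient equations and interpolation nodes so that a degree count shows these equations determine the rest. The full identity \eqref{eq:radialher} then follows from a rational interpolation audit: two polynomials of bounded degree that agree at enough nodes are equal.

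\emph{Step 3: positivity (main obstacle).} On the affine solution set, the free coordinates are solved as rational sections in two complementary row charts, since one pivot chart degenerates somewhere on the $t$-interval. In each chart and on each of three $t$-subintervals, the Schur complements of the Gram matrices should have Bernstein coefficients that are positive semidefinite, certified exactly by $LDL^*$ pivots as before. That gives six Bernstein certificates.

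Continuity in $t$ comes from the sections being rational functions without poles on each chart domain, glued on the chart overlaps. Near $t=18/25$ and $t=1/10$ the margins shrink, and I expect subdividing and choosing the chart boundaries to be the hardest part. If strict positivity fails there, I would first try raising the Bernstein degree in $t$.
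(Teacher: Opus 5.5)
Your outline has the same architecture as the paper's proof: the face at $\rho=1$ with $G_0(1,t)=\bar e_te_t^{\mathsf T}$, $e_t=\mathbf d+\zeta_t\mathbf n$, and vanishing tangential Grams; Markov--Lukacs forms in $\rho$; an exact seed; a $72$-row audit over $\mathbb Q(t)$; two row charts; and Bernstein controls with exact $LDL^*$. However, it does not identify the ingredient that makes continuation in $t$ possible. Your Steps~2--3 act on a system $A(t)x=b(t)$ over $\mathbb Q(t)$ whose unknowns are coordinates of \emph{reduced} Grams in bases of the forced supports. Those supports therefore need bases that are rational in $t$. The ansatz matrices are forced to be singular on them, so without such bases there is no strict $LDL^*$/Bernstein certificate and no rational system to interpolate. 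Reading kernels off a floating SDP at one sample $t$ does not supply this.

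The missing observation is that the forced kernels are evaluation conditions at the two zeros of $D_{1,t}$ on the boundary rays of $\cS_{\pi/4}$. These are $z_0=(1+i)/(2t)$ on $\arg z=\pi/4$, where $Q$ vanishes, and $z_0'=t(1-i)$ on $\arg z=-\pi/4$, where $P$ vanishes. The parametrization $s_t=(1-4t^4)/(2t^2)$ is chosen precisely to make these zeros rational in $t$. Concretely:
\begin{itemize}
\item $k_P(t)=-2t^3(1+i)\,\mathbf w(z_0)$ and $k_Q(t)=J_4k_P(t)=\mathbf w(z_0')$ are the kernels of the $\rho=1$ endpoints of $\widetilde G_P$ and $\widetilde G_Q$.
\item The roots of $q_t$ are $\bar z_0$ and $\bar z_0'$, so $U(t)$ annihilates $(\mathbf v(z_0),\mathbf v(z_0))$ and $(\mathbf v(z_0'),\mathbf v(z_0'))$.
\item $S(t)$ annihilates $(y,y)$ whenever $e_t^{\mathsf T}y=0$.
\end{itemize}
Your instinct to look for an analogue of $p$ was right. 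The vector $p$ in \cref{lem:imagcert} is exactly $k_P(\delta)$ with $\delta=\sqrt{(\sqrt2-1)/2}$, and $t=\delta$ is where $\zeta_t=i$. That lemma lives in $\mathbb Q(i,a)$ only because its boundary zeros are algebraic; the radial parametrization exists to remove this. Relatedly, at a fixed non-real $c=\rho\zeta_t$ you cannot tie $G_Q$ to $G_P$ by conjugation, since that sends $c$ to $\bar c$; the paper uses it only afterwards, to reflect the arc. Only the reversal $z\mapsto 1/z$ is available, giving $G_Q=J_4G_PJ_4$, consistent with $k_Q=J_4k_P$.

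Your Step~3 also differs from the paper and is underspecified.
\begin{itemize}
\item Exact rational sections taken from a $72\times72$ minor carry that minor as a denominator, of degree up to $180$ after clearing column denominators by the paper's count. You would then need charts on which it has no zero, and Bernstein certificates for matrix polynomials of that degree.
\item Passing from $72$ selected rows to all $144$ equations needs more than a degree count. It needs $\operatorname{rank}[A(t)\ b(t)]\le72$ for \emph{every} $t>0$, from the identically vanishing bordered $73$-minors, together with full row rank of the chart.
\end{itemize}
The paper keeps the degree at $6$ or $8$. On each $I_j$ it certifies an approximate Bernstein section $\widetilde x_j$ whose six reduced control blocks exceed $m_jI$. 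It also certifies a polynomial approximate right inverse $B_j$ with $\norm{E_j}_\infty<\eta_j<1$, where $E_j=I-A_jB_j$. The exact correction $x_j=\widetilde x_j+B_j(I-E_j)^{-1}(b_j-A_j\widetilde x_j)$ then moves every coordinate by less than $L_j\epsilon_j/(1-\eta_j)$, which the margin absorbs through the $16q$ row-sum estimate. The bound $\eta_j<1$ simultaneously gives the chart's full row rank, even where the base minor is singular. Your exact-section route could work in principle, but it is heavier, and either version presupposes the rational supports above.
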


\begin{proof}
For fixed $t$, the tangential Grams have the form
\begin{equation}\label{eq:radialedge}
 G_P(\rho,t)=(1-\rho^2)\widetilde G_P(\rho,t),
 \qquad
 G_Q(\rho,t)=(1-\rho^2)\widetilde G_Q(\rho,t),
\end{equation}
where each tilde Gram is the affine interpolation of positive endpoint
Grams at $\rho=R$ and $\rho=1$.  For the ordinary Gram put
\[
 \mathcal R_\rho=\binom{I_5}{\rho I_5}
\]
and use the odd matrix Markov--Lukacs form
\begin{equation}\label{eq:radialml}
 G_0(\rho,t)=(\rho-R)\mathcal R_\rho^*S(t)\mathcal R_\rho
             +(1-\rho)\mathcal R_\rho^*U(t)\mathcal R_\rho.
\end{equation}
Thus positivity on $[R,1]$ follows once the two endpoint edge Grams and
$S(t),U(t)$ are positive on their forced supports.

Those supports are rational in $t$.  Define
\begin{equation}\label{eq:kpt}
 k_P(t)=\bigl(-2t^3(1+i),-2it^2,t(1-i),1\bigr)^\mathsf T,
 \qquad k_Q(t)=J_4k_P(t).
\end{equation}
The endpoint edge supports are $k_P(t)^\perp$ and $k_Q(t)^\perp$.
Writing $e_t=\mathbf d+\zeta_t\mathbf n$, a six-dimensional support for
$S(t)$ is
\begin{equation}\label{eq:Stsupport}
 \{(x,-x):x\in\C^5\}
       +\operatorname{span}\{(\bar e_t,0)\}.
\end{equation}
For the other Markov--Lukacs multiplier set
\begin{equation}\label{eq:qt}
 q_t(z)=\bigl(z-t(1+i)\bigr)
        \left(z-\frac{1-i}{2t}\right)
\end{equation}
and let $\mathcal Q_t\in M_{5,3}(\C)$ be the coefficient-convolution map
$p\mapsto q_tp$ from $\C[z]_{\le2}$ to $\C[z]_{\le4}$.  An
eight-dimensional support for $U(t)$ is
\begin{equation}\label{eq:Utsupport}
 \{(x,-x):x\in\C^5\}
       +\{(\mathcal Q_ty,0):y\in\C^3\}.
\end{equation}

At $t_0=1/2$ one has $s_{t_0}=3/2$ and
$\zeta_{t_0}=(7+24i)/25$.  The ancillary rational data give six positive
definite reduced Grams on the supports above, of orders
\begin{equation}\label{eq:radialorders}
                 4,\ 3,\ 4,\ 3,\ 6,\ 8.
\end{equation}
Their smallest exact $LDL^*$ pivots have the decimal displays
\begin{equation}\label{eq:radialpivots}
 \begin{gathered}
 .0388996584,\quad .5674026743,\quad .2919816307,\\
 4.469610333,\quad .0196053190,\quad .0223098226.
 \end{gathered}
\end{equation}
Every number in \eqref{eq:radialpivots} displays a positive rational pivot.
The verifier re-expands all four coefficients in $\rho$ in
\eqref{eq:radialher} exactly.  It also checks that the exact rank-two
projector occurring in the $U$-face annihilates $\mathcal Q_{t_0}$.
Since both spaces have complementary dimensions two and three, this proves
\eqref{eq:Utsupport} without a numerical eigenspace assertion.

It remains to justify that the strict certificate continues with $t$.
Choose the elementary rational bases in
\eqref{eq:kpt}--\eqref{eq:Utsupport}, and record Hermitian matrices by real
coordinates.  Comparing the four coefficients in $\rho$ gives
\begin{equation}\label{eq:Atbt}
                  A(t)x=b(t),\qquad
 A(t)\in M_{144,150}(\mathbb Q(t)).
\end{equation}
A fixed set $\mathcal R$ of $72$ rows and $\mathcal C$ of $72$ columns,
selected at $t_0$, gives a square minor
$M(t)=A(t)_{\mathcal R,\mathcal C}$ with $M(t_0)\ne0$.  The exact identities
\begin{equation}\label{eq:rightrational}
 A(t)_{:,\mathcal C}M(t)^{-1}A(t)_{\mathcal R,:}=A(t),
 \qquad
 A(t)_{:,\mathcal C}M(t)^{-1}b(t)_{\mathcal R}=b(t)
\end{equation}
hold in $\mathbb Q(t)$.

For completeness, we describe the finite audit of
\eqref{eq:rightrational}.  Clear denominators independently in each column.
Among the $72$ selected columns, the resulting degree multiset is
\begin{equation}\label{eq:degreemultiset}
 0^{(34)},\ 2^{(6)},\ 3^{(6)},\ 4^{(10)},\
 5^{(4)},\ 6^{(3)},\ 8^{(9)}.
\end{equation}
An arbitrary added map or target column has degree at most eight.  Hence
every relevant bordered determinant has degree at most
\[
 34\cdot0+6\cdot2+6\cdot3+10\cdot4+4\cdot5
       +3\cdot6+9\cdot8+8=188.
\]
At each of the $189$ distinct rational points $t=1,\ldots,189$, exact
rational inversion of $M(t)$ gives both identities in
\eqref{eq:rightrational} entry by entry.  The bordered determinants therefore
have $189$ roots and vanish identically.  The ancillary continuation audit
performs the denominator, degree, inversion, and residual checks from
scratch; no floating-point rank decision enters this interpolation step.

We next give the interval certificates.  Let
\begin{equation}\label{eq:phaseintervals}
\begin{gathered}
 I_1=[1/10,1/5],\quad I_2=[1/5,3/10],\quad I_3=[3/10,2/5],\\
 I_4=[2/5,1/2],\quad I_5=[1/2,3/5],\quad I_6=[3/5,18/25].
\end{gathered}
\end{equation}
and write $u_j=(t-\inf I_j)/(\sup I_j-\inf I_j)$.  The first five
intervals use the row set $\mathcal R_0=\mathcal R$ above.  A second fixed
set $\mathcal R_1$ of $72$ rows, selected at $t=7/10$, is used on $I_6$;
all indices are recorded in the verifier.  Put
\[
                     d=(6,6,6,6,6,8),
 \qquad \beta_{j,k}(u)=\binom{d_j}{k}u^k(1-u)^{d_j-k}.
\]
For each $j$, the ancillary integer data specify a Bernstein section
\begin{equation}\label{eq:bernsteinsection}
             \widetilde x_j(u)=
                \sum_{k=0}^{d_j}\beta_{j,k}(u)X_{j,k}
                \quad (X_{j,k}\in\mathbb Q^{150}).
\end{equation}
As before, every control vector encodes six reduced Hermitian blocks in the
orders \eqref{eq:radialorders}.  Fraction-free $LDL^*$ proves that every
control block exceeds $m_jI$.  The same data give a polynomial matrix
$B_j(u)\in M_{150,72}(\mathbb Q[u])$.  With
\[
 A_j(t)=A(t)_{\mathcal R_0,:}\quad(j\le5),
 \qquad A_6(t)=A(t)_{\mathcal R_1,:},
\]
and the analogous notation $b_j$, exact Bernstein arithmetic proves
\begin{equation}\label{eq:intervalbounds}
 \begin{gathered}
 \norm{b_j-A_j\widetilde x_j}_\infty<\epsilon_j,
 \qquad \norm{B_j}_\infty<L_j,\\
 E_j:=I_{72}-A_jB_j,\qquad \norm{E_j}_\infty<\eta_j,
 \end{gathered}
\end{equation}
where the certified bounds are
\[
\begin{array}{c|c|c|c|c|c}
j&m_j&\epsilon_j&L_j&\eta_j&
 m_j-16L_j\epsilon_j/(1-\eta_j)\\ \hline
1&1/20&1/300000&403&2/25&919/34500\\
2&1/30&1/500000&201&1/190&10579/393750\\
3&1/40&1/500000&209&1/1000&18287/999000\\
4&1/51&1/1000000&247&3/1000&99431/6355875\\
5&1/60&1/2000000&302&1/90&47471/3337500\\
6&1/70&1/1000000&198&1/500&9703/873250
\end{array}
\]
Every entry in the last column is greater than $1/100$.

Here are the exact details behind these finite inequalities.  The common
positive denominator
\begin{equation}\label{eq:phasedenominator}
 \Delta(t)=20t^2(2t^2-2t+1)^2(2t^2+2t+1)^2
\end{equation}
clears every selected entry of $A$ and $b$.  After substituting $t$ as an
affine function of $u_j$, its Bernstein controls are strictly positive.
The entries of $\Delta(b_j-A_j\widetilde x_j)$ and
$\Delta(I-A_jB_j)$ are polynomials of explicitly bounded degree.  The
verifier expands every scalar entry in the Bernstein basis over $\mathbb Q$,
takes the maximum absolute row sum of the controls, and divides by the
smallest control of $\Delta$.  This gives \eqref{eq:intervalbounds} without
interval sampling.  It also applies exact $LDL^*$ to all
$5\cdot7\cdot6+9\cdot6=264$ rational Gram controls.  The smallest pivot
after subtracting the stated $m_jI$ has decimal display
\[
                         0.0002415456052637.
\]
The stored candidates and right-inverse controls have common denominator
$10^{10}$; only these integers, not the discovery SDP, are read by the
verifier.

Since $\eta_j<1$, the Neumann lemma makes $I-E_j$ invertible and
\[
                         B_j(I-E_j)^{-1}
\]
is an exact right inverse of $A_j$.  If
$r_j=b_j-A_j\widetilde x_j$, define
\begin{equation}\label{eq:intervalcorrection}
 x_j=\widetilde x_j+B_j(I-E_j)^{-1}r_j.
\end{equation}
Then $A_jx_j=b_j$ and
\begin{equation}\label{eq:coordinatecorrection}
 \norm{x_j-\widetilde x_j}_\infty
             <\frac{L_j\epsilon_j}{1-\eta_j}.
\end{equation}
The rational identities \eqref{eq:rightrational} imply that the augmented
matrix $[A(t)\ b(t)]$ has rank at most $72$ for every $t>0$, by vanishing of
its $73$-minors.  On $I_6$, \eqref{eq:intervalbounds} shows that the second
row chart has rank $72$ as well.  Thus solving either selected row system
solves all $144$ coefficient equations, including at points where the base
minor is singular.

Finally, if every real coordinate of a Hermitian block of order at most
eight changes by at most $q$, its maximum absolute row sum, and hence its
operator norm, changes by less than $16q$.  Equations
\eqref{eq:coordinatecorrection} and the last column of the table therefore
show that every corrected reduced block is larger than $I/100$.  Equations
\eqref{eq:radialedge} and \eqref{eq:radialml} finish the proof.
\end{proof}

\begin{proof}[Proof of \cref{thm:phasewedge}]
For $0\le\rho\le R$, the assertion is \cref{thm:centraldisk}.  For
$R\le\rho<1$, sector containment gives $P,Q\ge0$, so
\cref{lem:radialcontinue} implies
$2D_{\rho,t}^*D_{\rho,t}-N_{\rho,t}^*N_{\rho,t}\ge0$.
Moreover $I+\rho\zeta_tw^2$ is invertible.  Congruence by
$D_{\rho,t}^{-1}$ proves \eqref{eq:phasewedgebound}.
At $\rho=1$ the scalar rational function cancels to the constant
$\bar\zeta_t$, so the endpoint follows directly.

Finally $W(T^*)=\overline{W(T)}\subset\cS_{\pi/4}$.  Applying the result to
$T^*$ and taking adjoints, using commutativity of the two polynomial factors,
replaces $c$ by $\bar c$.  Since $s'_{1/2}=-10$, the phases in
\eqref{eq:zetat} trace a nondegenerate arc.  More precisely,
$s_t=1/(2t^2)-2t^2$ is strictly decreasing, and
\[
                 \zeta_t=\frac{2+is_t}{2-is_t}.
\]
Thus $0<t\le1/\sqrt2$ traces the upper semicircle from $-1$ to $1$.
At $t=1/10$ the omitted upper angle is
$2\arctan(100/2499)$; reflection doubles it and gives the stated total
omitted length.  This proves all assertions.
\end{proof}

\begin{proof}[Proof of \cref{prop:andocentral}]
Put
\[
 R=(T-I)(T+I)^{-1},\qquad
 u=\frac{R+iI}{\sqrt2},\qquad v=\frac{R-iI}{\sqrt2}.
\]
Congruence by $T+I$ and the two sector inequalities
$\Re(e^{\pm i\pi/4}T)\ge0$ give
\begin{equation}\label{eq:Rdefects}
 I-R^*R+i(R-R^*)\ge0,
 \qquad I-R^*R-i(R-R^*)\ge0.
\end{equation}
Consequently $u$ and $v$ are commuting contractions and
$u-v=i\sqrt2I$.  This affine relation also makes both operators invertible.
Indeed, $\norm{ux}\ge(\sqrt2-1)\norm{x}$ follows from
$u=v+i\sqrt2I$, and the same estimate for $u^*$ makes the range of $u$
dense; the argument for $v$ is identical.

Set $a=\sqrt2$ and define
\begin{equation}\label{eq:CDdef}
 C=(u-iaI)^{-1}-iaI,
 \qquad D=(v+iaI)^{-1}+iaI.
\end{equation}
The scalar linear-fractional maps in \eqref{eq:CDdef} map $\mathbb D$ into
itself.  For example,
\[
 \left|\frac1{z-ia}-ia\right|\le1\quad(|z|\le1)
\]
follows after squaring from
$|z-ia|^2-|1+iaz|^2=1-|z|^2$.
Von Neumann's inequality therefore shows that $C$ and $D$ are commuting
contractions.  Since $u-iaI=v$ and $v+iaI=u$, elementary algebra gives,
with $P=CD$,
\begin{equation}\label{eq:CDrelations}
 \frac{C+D}{\sqrt2}=w,
 \qquad C-D=-i\sqrt2P,
 \qquad w^2=P(2I-P).
\end{equation}
For the first identity we used
$uv=(I+R^2)/2$ and
$w=2R(I+R^2)^{-1}$.

We now give the promised bidisk extension.  For scalar variables
$\zeta,\eta$ put
\begin{align*}
 q(\zeta,\eta)&=\zeta-\eta+i\sqrt2\,\zeta\eta,\\
 F(\zeta,\eta)&=\frac{(\zeta+\eta)^2}{2}
   +q(\zeta,\eta)\left(\frac{51i}{100}
                         +\frac{41(\eta-\zeta)}{400}\right).
\end{align*}
By \eqref{eq:CDrelations}, $q(C,D)=0$ and hence
$F(C,D)=w^2$.  We claim that
\begin{equation}\label{eq:Fbidisk}
             \norm{F}_{H^\infty(\mathbb D^2)}\le\kappa_0.
\end{equation}
Here is an exact elementary maximization.  On the distinguished boundary
write
\[
 \zeta=\chi e^{i\delta},\qquad
 \eta=\chi e^{-i\delta},\qquad |\chi|=1,
\]
and put $\tau=\sin\delta$, $x=\Re\chi$,
\[
 m=\frac{51}{50},\qquad n=\frac{41\sqrt2}{200},\qquad
 a(\tau)=2-\frac{51\sqrt2}{100}-\frac{159}{100}\tau^2.
\]
After removing a unit-modulus factor, direct expansion gives
\begin{equation}\label{eq:squaretorus}
 |F(\zeta,\eta)|=|-m\tau+a(\tau)\chi+n\tau\chi^2|,
\end{equation}
and hence
\begin{equation}\label{eq:squareconcave}
 |F(\zeta,\eta)|^2
 =a^2+2a\tau(n-m)x
   +\tau^2\bigl((m+n)^2-4mnx^2\bigr).
\end{equation}
For fixed $\tau$, the right side is a concave quadratic in
$x\in[-1,1]$.

Set $r=|\tau|$.  At $x=\pm1$, the larger modulus is
\begin{equation}\label{eq:squareedge}
                         |a(r)|+r(m-n).
\end{equation}
While $a(r)\ge0$, this concave quadratic in $r$ attains its maximum at
\[
 r_*=\frac{m-n}{2(159/100)}
     =\frac{17}{53}-\frac{41\sqrt2}{636},
\]
and that maximum is
\[
 2-\frac{51\sqrt2}{100}+\frac{(m-n)^2}{4(159/100)}
 =\frac{276889}{127200}-\frac{6103\sqrt2}{10600}=\kappa_0.
\]
While $a(r)\le0$, \eqref{eq:squareedge} is increasing and its endpoint
value $159/100-(2-51\sqrt2/100)+(m-n)$ is strictly smaller than
$\kappa_0$, since their difference is
\[
             \frac{199297-112032\sqrt2}{127200}>0.
\]

It remains to check the interior vertex of \eqref{eq:squareconcave}.  Such
a vertex belongs to $[-1,1]$ only if
\begin{equation}\label{eq:vertexcondition}
                         |a(r)|(m-n)\le4mnr.
\end{equation}
For $r<1/2$ this is impossible, since
\[
 a(1/2)(m-n)-2mn
 =\frac{147492-101353\sqrt2}{80000}>0.
\]
For $1/2\le r\le1$, the vertex value is
\[
 V(r^2)=(m+n)^2\left(r^2+\frac{a(r)^2}{4mn}\right).
\]
As a function of $r^2$, this is convex.  Exact endpoint evaluation gives
\begin{align*}
 \kappa_0^2-V(1/4)
   &=\frac{19(13707814025092-9187506155169\sqrt2)}
           {45109393920000}>0,\\
 \kappa_0^2-V(1)
   &=\frac{50692488845401-35843685592140\sqrt2}
           {11277348480000}>0.
\end{align*}
Thus every interior vertex is smaller than $\kappa_0^2$.  Equality in
\eqref{eq:Fbidisk} occurs in the edge calculation at $r=r_*$ and the
appropriate choice $x=-1$.  This proves the claimed exact norm.  The script
\path{tmp/research/right_angle_square_extension_exact.py} replays the
expansion and all comparisons over $\mathbb Q(\sqrt2)$.
In particular,
\[
 \sqrt2-\kappa_0
  =\frac{200436\sqrt2-276889}{127200}>0.
\]

Finally, Ando's dilation theorem for a commuting pair of contractions
\cite{Ando1963} and \eqref{eq:Fbidisk} give
\[
        \norm{w^2}=\norm{F(C,D)}
        \le\norm{F}_{H^\infty(\mathbb D^2)}\le\kappa_0,
\]
as required.
\end{proof}

\begin{proof}[Proof of \cref{thm:smallzeros}]
Keep the commuting contractions $C,D$ and the polynomial $F$ from the proof
of \cref{prop:andocentral}, and write
\[
                  W(\zeta,\eta)=\frac{\zeta+\eta}{\sqrt2}.
\]
For $\alpha,\beta\in\C$ define the rational bidisk function
\begin{equation}\label{eq:smallzeroextension}
 H_{\alpha,\beta}(\zeta,\eta)
 =\frac{F-(\alpha+\beta)W+\alpha\beta}
 {1-(\bar\alpha+\bar\beta)W+\bar\alpha\bar\beta F}.
\end{equation}
On the curve $q=0$ one has $F=W^2$, so the numerator and denominator in
\eqref{eq:smallzeroextension} factor respectively as
\[
 (W-\alpha)(W-\beta),\qquad
 (1-\bar\alpha W)(1-\bar\beta W).
\]
Since $q(C,D)=0$ and $W(C,D)=w$, it follows that
\begin{equation}\label{eq:smallzerotrace}
                  H_{\alpha,\beta}(C,D)
                   =b_\alpha(w)b_\beta(w).
\end{equation}

Put $\delta=1/150$, $r=|\alpha|$, and $s=|\beta|$.  On the closed bidisk,
$|W|\le\sqrt2$ and $|F|\le\kappa_0$.  Hence the denominator $D_0$ and
numerator $N_0$ in \eqref{eq:smallzeroextension} satisfy
\begin{align}
 |D_0|&\ge 1-(r+s)\sqrt2-rs\kappa_0
       \ge 1-2\delta\sqrt2-\delta^2\kappa_0,\label{eq:smalldenom}\\
 |N_0|&\le \kappa_0+(r+s)\sqrt2+rs
       \le \kappa_0+2\delta\sqrt2+\delta^2.\label{eq:smallnumer}
\end{align}
The first lower bound is strictly positive.  Indeed,
$0<\kappa_0<\sqrt2<3/2$ gives
\[
 1-2\delta\sqrt2-\delta^2\kappa_0
 >1-\frac1{50}-\frac1{15000}
 =\frac{14699}{15000}.
\]
Thus \eqref{eq:smallzeroextension} is analytic on a neighborhood of the
closed bidisk.

The exact reserve is also sufficient.  From the displayed value of
$\kappa_0$,
\[
 \sqrt2-\kappa_0
 =\frac{200436\sqrt2-276889}{127200}>\frac1{20};
\]
after clearing denominators, the last strict inequality follows from
\[
       2(200436)^2-(283249)^2=119184191>0.
\]
Consequently
\begin{align*}
 &\sqrt2(1-2\delta\sqrt2-\delta^2\kappa_0)
   -(\kappa_0+2\delta\sqrt2+\delta^2)\\
 &\qquad=(\sqrt2-\kappa_0)-(4+2\sqrt2)\delta
             -(1+\sqrt2\kappa_0)\delta^2\\
 &\qquad>\frac1{20}-\frac7{150}-\frac1{7500}
       =\frac2{625}>0.
\end{align*}
Equations \eqref{eq:smalldenom}--\eqref{eq:smallnumer} therefore give the
strict scalar norm bound displayed in \eqref{eq:smallzeros}.  Finally,
Ando's theorem applied to the commuting contractions $C,D$, together with
\eqref{eq:smallzerotrace}, gives the same bound for the operator rational
function: indeed, \eqref{eq:smallzeroextension} is analytic on a
neighborhood of the closed bidisk, so its Taylor polynomials converge there
uniformly and Ando's polynomial inequality passes to the limit.  The
endpoint choices $\alpha=1/150$, $\beta=i/150$ are allowed
because every estimate above holds on the closed parameter bidisk.
\end{proof}

\subsection{The exact remaining criterion for the centred slice}

The preceding certificates leave only a thin boundary annulus in the
$c$-parameter.  The
following reduction identifies its precise operator-theoretic content; in
particular, there is no additional formal branch in the two-moment region.

\begin{proposition}[Two-moment criterion]\label{prop:twomoment}
Let $T\in\B(\Hh)$ satisfy $W(T)\subset\cS_{\pi/4}$, put
\[
 w=(T^2-I)(T^2+I)^{-1},\qquad \mathsf X=w^2,
\]
and, for a unit vector $\xi$, put
\[
                 b=\norm{\mathsf X\xi}^2,
       \qquad q=\langle\mathsf X\xi,\xi\rangle.
\]
Then
\begin{equation}\label{eq:fullcdisk}
       \norm{h_c(w)}\le\sqrt2\qquad(c\in\mathbb D)
\end{equation}
holds if and only if
\begin{equation}\label{eq:twomoment}
 |q|^2\le(2-b)(2b-1)
 \quad\hbox{whenever}\quad b>1.
\end{equation}
Equivalently, the nonautomatic part of \eqref{eq:twomoment} is
\begin{equation}\label{eq:variance}
 \norm{\mathsf X\xi-q\xi}^2
       \ge 2\bigl(\norm{\mathsf X\xi}^2-1\bigr)^2,
 \qquad \norm{\mathsf X\xi}>1.
\end{equation}
\end{proposition}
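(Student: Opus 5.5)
The plan is to reduce the operator inequality \eqref{eq:fullcdisk} to a scalar inequality in the unit vector $\xi$ and the parameter $c$, and then to minimise over $c$ explicitly. First, $D_c$, equivalently $I+c\mathsf X$, is invertible for every $c\in\mathbb D$. This is the spectral-mapping argument already used in the proof of \cref{thm:centraldisk}: $\Sp(w)\subset\overline{\mathbb D}$, so $\Sp(\mathsf X)\subset\overline{\mathbb D}$.

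Since $h_c(w)=(\mathsf X+\bar cI)(I+c\mathsf X)^{-1}$, the bound $\norm{h_c(w)}\le\sqrt2$ is equivalent to
\[
 \norm{(\mathsf X+\bar c I)\xi}^2\le 2\norm{(I+c\mathsf X)\xi}^2 \qquad\text{for all unit }\xi .
\]
Indeed, substitute $\xi=(I+c\mathsf X)^{-1}y$ and use that this map is onto. Expanding both sides with $b=\norm{\mathsf X\xi}^2$ and $q=\langle\mathsf X\xi,\xi\rangle$ gives $b+2\Re(cq)+|c|^2$ on the left and $2+4\Re(cq)+2|c|^2b$ on the right. So \eqref{eq:fullcdisk} holds if and only if, for every unit $\xi$ and every $c\in\mathbb D$,
\[
 (2-b)+2\Re(cq)+|c|^2(2b-1)\ge0 .
\]
Choosing the phase of $c$ to make $\Re(cq)=-\rho|q|$ with $\rho=|c|$, this becomes
\[
 g(\rho):=(2-b)-2\rho|q|+\rho^2(2b-1)\ge0\qquad\text{for all }0\le\rho<1 .
\]

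The second step is to analyse this one-variable quadratic; this case split is the only real work. Cauchy--Schwarz gives $|q|^2\le b$.

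If $b>1$, the vertex $\rho_*=|q|/(2b-1)$ satisfies $\rho_*\le\sqrt b/(2b-1)<1$, because $2b-1>\sqrt b$ for $b>1$. So the vertex lies in $[0,1)$, and $g\ge0$ on $[0,1)$ is exactly the vertex condition $|q|^2\le(2-b)(2b-1)$. This is \eqref{eq:twomoment}.

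If $b\le1$, the condition should be automatic. When $\rho_*\ge1$, or $2b-1\le0$, $g$ is nonincreasing on $[0,1)$ (for $b=1/2$ it is affine with slope $-2|q|$), so its infimum is the limit
\[
 g(1^-)=1+b-2|q|\ge1+b-2\sqrt b=(1-\sqrt b)^2\ge0 .
\]
When $\rho_*<1$, we have $|q|<2b-1\le1$. Since $2b-1\le2-b$ for $b\le1$, it follows that $|q|^2<(2b-1)^2\le(2-b)(2b-1)$, so the vertex value is nonnegative.

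The step to watch is precisely this bookkeeping of where the vertex lies. The naive vertex inequality fails for $b\le1$, because $(2-b)(2b-1)-b=-2(b-1)^2\le0$. It is rescued only by the constraint $\rho<1$ together with $|q|\le\sqrt b$. Since the infimum is over the open disk, the limiting value at $\rho=1$ only needs to be $\ge0$, which is what we showed. Both directions of the equivalence follow, because each step above is an equivalence for every fixed $\xi$.

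Finally, \eqref{eq:variance} is an algebraic rewriting. We have $\norm{\mathsf X\xi-q\xi}^2=b-|q|^2$, and
\[
 b-(2-b)(2b-1)=2b^2-4b+2=2(b-1)^2 .
\]
Hence $|q|^2\le(2-b)(2b-1)$ is equivalent to $\norm{\mathsf X\xi-q\xi}^2\ge2(\norm{\mathsf X\xi}^2-1)^2$. The restriction $b>1$ corresponds to $\norm{\mathsf X\xi}>1$.
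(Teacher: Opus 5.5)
Your proof is correct and follows the paper's argument. Like the paper, you use congruence by $I+c\mathsf X$ to reduce to the compressed quadratic $(2-b)+2\Re(cq)+|c|^2(2b-1)$, minimize over the phase of $c$, and split into $b\le1$ versus $b>1$, using Cauchy--Schwarz $|q|\le\sqrt b<2b-1$ to place the vertex, and finish with the identity $b-(2-b)(2b-1)=2(b-1)^2$. The only difference is that you never invoke the bound $b\le\kappa_0^2<2$ from \cref{prop:andocentral}, which the paper quotes for the endpoint value $2-b$; your case analysis confirms that it is not actually needed.
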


\begin{proof}
By \cref{prop:andocentral},
$\norm{\mathsf X}\le\kappa_0<\sqrt2$, and hence
$0\le b\le\kappa_0^2<2$.  For $|c|<1$,
spectral mapping makes $I+c\mathsf X$ invertible.  Congruence by this
operator shows that \eqref{eq:fullcdisk} is equivalent to positivity, for
every $c\in\mathbb D$, of
\begin{equation}\label{eq:Hc}
 H(c)=(2-|c|^2)I+(2|c|^2-1)\mathsf X^*\mathsf X
                   +c\mathsf X+\bar c\mathsf X^*.
\end{equation}
Indeed, \eqref{eq:Hc} is the expansion of
\[
 2(I+c\mathsf X)^*(I+c\mathsf X)
       -(\mathsf X+\bar cI)^*(\mathsf X+\bar cI).
\]
Writing $c=re^{i\theta}$ and minimizing first over $\theta$ gives
\begin{equation}\label{eq:Hcompression}
 \min_\theta\langle H(re^{i\theta})\xi,\xi\rangle
       =2-b+(2b-1)r^2-2r|q|.
\end{equation}
Both endpoint values in $0\le r\le1$ are nonnegative: the value at zero is
$2-b$, while the value at one is
$b+1-2|q|\ge(\sqrt b-1)^2$.

If $b\le1/2$, the quadratic in \eqref{eq:Hcompression} is concave, so its
endpoint values suffice.  Suppose $1/2<b\le1$.  If its critical point
$r_*=|q|/(2b-1)$ is outside $(0,1)$, the endpoints again suffice.  If
$r_*\in(0,1)$, then
\[
 |q|^2\le(2b-1)^2\le(2-b)(2b-1),
\]
so the critical value is nonnegative as well.  Thus the range $b\le1$ is
automatic.

Finally suppose $b>1$.  Cauchy--Schwarz and
\[
                 (2b-1)^2-b=(4b-1)(b-1)>0
\]
give $|q|\le\sqrt b<2b-1$.  Hence the critical point belongs to $[0,1)$,
and its value is nonnegative exactly when
\[
                 2-b-\frac{|q|^2}{2b-1}\ge0,
\]
which is \eqref{eq:twomoment}.  This proves the equivalence.  The identity
\[
 b-(2-b)(2b-1)=2(b-1)^2
\]
and orthogonal projection onto $\mathbb C\xi$ give
\eqref{eq:variance}.
\end{proof}

\section{The sharp two-node admissible-kernel theorem}
\label{sec:twonode}

We record explicitly the two-node result used below.  For $a,b\in\mathbb D$
write
\[
 \delta(a,b)=\frac{|a-b|}{|1-\bar a b|}
\]
for the pseudohyperbolic distance, and put
\begin{equation}\label{eq:radialF}
 \mathcal F(t)=\frac{2\sqrt2\,t}{3+\sqrt{1-t^2}},
 \qquad 0\le t<1.
\end{equation}

\begin{lemma}[Radial scaling and right-angle geometry]
\label{lem:twonodegeometry}
For $a,b\in\mathbb D$,
\begin{equation}\label{eq:radialscaling}
 \delta(a/\sqrt2,b/\sqrt2)
 \le \mathcal F\bigl(\delta(a,b)\bigr).
\end{equation}
If $(c_j,d_j,w_j)\in\mathbb D^3$, $j=1,2$, satisfy
\begin{equation}\label{eq:rightanglecurve2}
 c_j-d_j+i\sqrt2c_jd_j=0,
 \qquad w_j=\frac{c_j+d_j}{\sqrt2},
\end{equation}
then
\begin{equation}\label{eq:rightanglegeometry}
 \max\{\delta(c_1,c_2),\delta(d_1,d_2)\}
 \ge \mathcal F\bigl(\delta(w_1,w_2)\bigr).
\end{equation}
\end{lemma}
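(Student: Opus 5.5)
The plan is to treat the two assertions separately. For \eqref{eq:radialscaling} I will show that, at fixed pseudohyperbolic distance $t=\delta(a,b)$, the quantity $\delta(a/\sqrt2,b/\sqrt2)$ is largest when the pair is symmetric about the origin. For \eqref{eq:rightanglegeometry} I will parametrize the curve \eqref{eq:rightanglecurve2} explicitly by $w$, normalize one node to $w_1=0$, compute directly, and use \eqref{eq:radialscaling} to absorb the cost of the normalization.

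\emph{Radial scaling.} First compute the symmetric configuration $a=x$, $b=-x$ with $x>0$. Then $t=2x/(1+x^2)$, so $x^2=(1-s)/(1+s)$ with $s=\sqrt{1-t^2}$. A direct substitution gives
\[
 \delta(x/\sqrt2,-x/\sqrt2)=\frac{2\sqrt2\,x}{2+x^2}=\frac{2\sqrt2\,t}{3+s}=\mathcal F(t).
\]
Hence $\mathcal F$ is exactly the value at the symmetric pair, and it remains to show that this pair is extremal. By rotation invariance, every pair with $\delta(a,b)=t$ can be written as $a=\varphi_m(x)$, $b=\varphi_m(-x)$, where $\varphi_m(z)=(z+m)/(1+\bar mz)$ and $m\in\mathbb D$ is the hyperbolic midpoint. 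I will write
\[
 1-\delta(ra,rb)^2=\frac{(1-r^2|a|^2)(1-r^2|b|^2)}{|1-r^2\bar ab|^2},\qquad r^2=\tfrac12,
\]
as an explicit function of $m$ (with $x$ fixed) and show it is minimized at $m=0$. The first attempt will be to expand it in $|m|^2$ and $\Re(m^2)$ and verify monotonicity in $|m|$ for each phase. The fallback is a subordination argument that compares $z\mapsto rz$ with $\varphi_{rm}^{-1}\circ(r\,\cdot)\circ\varphi_m$, which also maps $\mathbb D$ into $r\mathbb D$-type domains. I expect this extremality step to be the main computational obstacle for the first assertion.

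\emph{Right-angle geometry.} Solving \eqref{eq:rightanglecurve2} as in the proof of \cref{prop:andocentral} gives $cd=P=1-\sqrt{1-w^2}$, $c+d=\sqrt2\,w$ and $c-d=-i\sqrt2P$. Therefore
\[
 c(w)=\frac{w-iP(w)}{\sqrt2},\qquad d(w)=\frac{w+iP(w)}{\sqrt2},
\]
with the principal branch of the root on $\mathbb D$. For real $w=t$ this yields $|c|^2=|d|^2=\bigl(t^2+(1-s)^2\bigr)/2=1-s$. Then $1-s\ge\mathcal F(t)^2=8(1-s)(1+s)/(3+s)^2$ is equivalent to $(3+s)^2-8(1+s)=(1-s)^2\ge0$. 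This settles the case $w_1=0$ with $w_2$ real. A rotation of $w$, together with the corresponding phase bookkeeping in $c,d$, should extend the computation to general $w_2$ with $w_1=0$; I would check this by computing $\max(|c|,|d|)$ for $w=te^{i\phi}$.

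The main obstacle is the reduction of a general pair $(w_1,w_2)$ to $w_1=0$. The automorphisms available on the curve come from the dilations $T\mapsto\lambda T$, $\lambda>0$, which preserve $\cS_{\pi/4}$. They act on $w$ through real Möbius maps fixing $\pm1$, and on $c$ and $d$ through disk automorphisms that preserve \eqref{eq:rightanglecurve2}. Both $\delta(c_1,c_2)$ and $\delta(d_1,d_2)$ are invariant under these actions. These dilations do not move an arbitrary $w_1$ to $0$, however, so the plan is as follows. First use the dilations to place $w_1$ on the imaginary axis. Then compare the remaining configuration with the centred one by applying \eqref{eq:radialscaling}, using the linear relation $(c+d)/2=w/\sqrt2$, which is the bidisk-contractive coordinate. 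If this comparison loses too much, I would instead treat $(w_1,w_2)\mapsto\max\{\delta(c_1,c_2),\delta(d_1,d_2)\}$ directly as a two-point extremal problem on the analytic disk $w\mapsto(c(w),d(w))$ and minimize it at fixed $\delta(w_1,w_2)$ by the same midpoint parametrization used in the first part.
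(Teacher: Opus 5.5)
\textbf{Part one.} Your plan for \eqref{eq:radialscaling} matches the paper's: the symmetric pair is extremal at fixed $\delta(a,b)$. However, you only announce the extremality step and never carry it out. In the paper's coordinates $a=s\in[0,1)$, $b=(s+\xi)/(1+s\xi)$, $|\xi|\le t$, it is immediate. The quantity $\delta(ra,rb)=r|\xi|(1-s^2)/|1-r^2s^2+s(1-r^2)\xi|$ is largest at $\xi=-t$. Its $s$-derivative is then proportional to $(1-r^2)(ts^2-2s+t)$, whose root gives $b=-a$.

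\textbf{The gap is in the second assertion.} You compute only the case $w_1=0$, $w_2$ real. As you note yourself, the curve's symmetries cannot reach that slice from a general pair. They are the sector dilations plus finitely many reflections, a one-parameter family, while a general pair has three real invariants.

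Your proposed repair runs the wrong way. Put $m=\max\{\delta(c_1,c_2),\delta(d_1,d_2)\}$ and $t=\delta(w_1,w_2)$.
\begin{itemize}
\item Schwarz--Pick for $(\zeta,\eta)\mapsto(\zeta+\eta)/2$ on the bidisk gives $\delta(w_1/\sqrt2,w_2/\sqrt2)\le m$.
\item \eqref{eq:radialscaling} gives $\delta(w_1/\sqrt2,w_2/\sqrt2)\le\mathcal F(t)$.
\end{itemize}
These are two upper bounds on the same quantity, so they say nothing about $m$ versus $\mathcal F(t)$. The chain closes only at $w_1=-w_2$, where \eqref{eq:radialscaling} is an equality. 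Already at $w_1=0$, $w_2=t>0$, the contraction bound $t/\sqrt2$ lies strictly below $\mathcal F(t)=2\sqrt2t/(3+\sigma)$, where $\sigma=\sqrt{1-t^2}$. Your fallback of minimizing directly over $(w_1,w_2)$ just restates the problem. In $w$-coordinates it is a three-parameter minimization involving $\sqrt{1-w_j^2}$, and you give no way to carry it out.

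\textbf{The missing idea} is to return to the sector variable, where all three distances become explicit. The points $c$ and $d$ are disk automorphisms of the Cayley coordinates of the two supporting half-planes of $\cS_{\pi/4}$, and $w$ is the Cayley image of $\beta^2$. Write $\beta_1=re^{i\theta}$, $\beta_2=se^{i\phi}$, and set $U=(r^2+s^2)/(2rs)$, $X=\cos(\phi-\theta)$, $Y=|\sin(\phi+\theta)|$. Then
\[
m^2=\frac{U-X}{U-Y},\qquad t^2=\frac{U^2-X^2}{U^2-Y^2}.
\]
Hence $m^2/t^2=(U+Y)/(U+X)$. The identity $X^2=\sigma^2U^2+(1-\sigma^2)Y^2$ yields $X\le\sigma U+(1+\sigma)Y$. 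Therefore $m^2\ge t^2/(1+\sigma)=1-\sigma\ge\mathcal F(t)^2$.

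Note that $1-\sigma$ is exactly the value $|c|^2=|d|^2$ you obtained for $w_1=0$, $w_2$ real (your $s$ is $\sigma$). Your slice is the equality case. What the lemma needs is that $m^2\ge1-\sqrt{1-t^2}$ holds for every pair, and the proposal gives no argument for that.
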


\begin{proof}
We first prove \eqref{eq:radialscaling}.  More generally, fix $0<r<1$.
After a common rotation write $a=s\in[0,1)$ and
\[
 b=\frac{s+\xi}{1+s\xi},\qquad |\xi|\le t.
\]
Direct simplification gives
\[
 \delta(ra,rb)
 =\frac{r|\xi|(1-s^2)}
 {|1-r^2s^2+s(1-r^2)\xi|}.
\]
For fixed $s$ its maximum occurs at $\xi=-t$.  Differentiating in $s$
gives $ts^2-2s+t=0$, hence
$s=(1-\sqrt{1-t^2})/t$ when $t>0$.  The resulting maximum is
$2rs/(1+r^2s^2)$.  Taking $r=1/\sqrt2$ gives
\eqref{eq:radialscaling}; the case $t=0$ follows by continuity.

For \eqref{eq:rightanglegeometry}, undo the disk and half-plane Cayley
maps in \eqref{eq:rightanglecurve2} and write the two sector points as
$\beta_1=re^{i\theta}$ and $\beta_2=se^{i\phi}$, with
$|\theta|,|\phi|\le\pi/4$.  Set
\[
 U=\frac{r^2+s^2}{2rs},\qquad
 X=\cos(\phi-\theta),\qquad
 Y=|\sin(\phi+\theta)|.
\]
Then $U\ge1$ and $0\le Y\le X\le1$.  Cayley invariance and direct
calculation give, with
$m=\max\{\delta(c_1,c_2),\delta(d_1,d_2)\}$ and
$t=\delta(w_1,w_2)$,
\begin{equation}\label{eq:twonodeUXY}
 m^2=\frac{U-X}{U-Y},\qquad
 t^2=\frac{U^2-X^2}{U^2-Y^2}.
\end{equation}
If $\sigma=\sqrt{1-t^2}$, these identities imply
\[
 \frac{m^2}{t^2}=\frac{U+Y}{U+X},\qquad
 X^2=\sigma^2U^2+(1-\sigma^2)Y^2.
\]
Since
\[
 [\sigma U+(1+\sigma)Y]^2-X^2
 =2\sigma(1+\sigma)Y(U+Y)\ge0,
\]
we have $X\le\sigma U+(1+\sigma)Y$.  Therefore
\[
 \frac{m^2}{t^2}
 \ge\frac1{1+\sigma}
 \ge\frac8{(3+\sigma)^2},
\]
the last inequality being $(1-\sigma)^2\ge0$.  This is precisely
\eqref{eq:rightanglegeometry}.
\end{proof}

\begin{theorem}[Two-node right-angle completion]
\label{thm:twonode}
Let two triples satisfy \eqref{eq:rightanglecurve2}, let $h$ be Schur on
$\mathbb D$, and put $\lambda_j=h(w_j)$.  If a Hermitian $2\times2$ matrix
$L$ satisfies
\begin{equation}\label{eq:twonodeadmissible}
 (1-\bar c_i c_j)_{i,j=1}^2\circ L\succeq0,
 \qquad
 (1-\bar d_i d_j)_{i,j=1}^2\circ L\succeq0,
\end{equation}
then
\begin{equation}\label{eq:twonodetarget}
 (2-\bar\lambda_i\lambda_j)_{i,j=1}^2\circ L\succeq0.
\end{equation}
The same conclusion holds after independent disk automorphisms in the two
test coordinates, because their Pick kernels change only by invertible
diagonal congruences.
\end{theorem}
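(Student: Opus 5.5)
The plan is to reduce everything to scalar inequalities between pseudohyperbolic distances, using that a $2\times2$ Hermitian Schur product is positive semidefinite exactly when its diagonal entries are nonnegative and its determinant is nonnegative. The chain will be admissibility gives an upper bound for $|L_{12}|^2/(L_{11}L_{22})$ in terms of $m$; \cref{lem:twonodegeometry} together with Schwarz--Pick converts this into the bound the target requires.

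\emph{Diagonal entries and the degenerate case.} Since $|c_j|<1$, the diagonal entries of the first condition in \eqref{eq:twonodeadmissible} are $(1-|c_j|^2)L_{jj}\ge0$, so $L_{11},L_{22}\ge0$. If $L_{11}L_{22}=0$, nonnegativity of the determinant forces $L_{12}=0$. Then \eqref{eq:twonodetarget} is diagonal with entries $(2-|\lambda_j|^2)L_{jj}\ge0$, because $|\lambda_j|\le1$.

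\emph{Nondegenerate case.} Assume $L_{11}L_{22}>0$ and put $\rho=|L_{12}|^2/(L_{11}L_{22})$. For $a_1,a_2\in\mathbb D$, the determinant condition for $(1-\bar a_ia_j)\circ L$ reads
\[
 \rho\le\frac{(1-|a_1|^2)(1-|a_2|^2)}{|1-\bar a_1a_2|^2}=1-\delta(a_1,a_2)^2,
\]
using the standard identity $1-\delta^2=(1-|a_1|^2)(1-|a_2|^2)/|1-\bar a_1a_2|^2$. Applying this to both conditions in \eqref{eq:twonodeadmissible} gives $\rho\le1-m^2$, where $m=\max\{\delta(c_1,c_2),\delta(d_1,d_2)\}$.

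The target kernel is $2(1-\overline{\mu_i}\mu_j)$ with $\mu_j=\lambda_j/\sqrt2$. Its diagonal entries are positive, and its determinant condition is
\[
 \rho\le1-\delta(\lambda_1/\sqrt2,\lambda_2/\sqrt2)^2 .
\]
It therefore suffices to prove
\[
 \delta(\lambda_1/\sqrt2,\lambda_2/\sqrt2)\le m .
\]

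\emph{The distance chain.} Let $t=\delta(w_1,w_2)$. By Schwarz--Pick, $\delta(\lambda_1,\lambda_2)\le t$. The proof of \eqref{eq:radialscaling} maximizes over $|\xi|\le t$, so it in fact bounds $\delta(a/\sqrt2,b/\sqrt2)$ by $\mathcal F(t)$ whenever $\delta(a,b)\le t$. Equivalently, $\mathcal F$ is increasing: its numerator increases with $t$ and its denominator decreases. Hence
\[
 \delta(\lambda_1/\sqrt2,\lambda_2/\sqrt2)\le\mathcal F(t)\le m,
\]
where the last step is \eqref{eq:rightanglegeometry}. One caveat: if some $|\lambda_j|=1$, then $h$ is a unimodular constant by the maximum principle, so $\lambda_1=\lambda_2$ and the left-hand distance is $0$.

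\emph{Automorphism invariance.} A disk automorphism $\varphi$ satisfies
\[
 1-\overline{\varphi(a)}\varphi(b)=g(a)^*g(b)\,(1-\bar ab)
\]
for a nonvanishing function $g$. Thus the transformed Pick kernel equals $\Delta^*K\Delta$ with $\Delta$ diagonal and invertible. Since $\Delta^*(K\circ L)\Delta=(\Delta^*K\Delta)\circ L$, positivity of each admissibility condition is unchanged, and the theorem is unaffected.

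\emph{Main obstacle.} The only substantive input is the geometric inequality \eqref{eq:rightanglegeometry}, which is already proved. The remaining point to check carefully is the monotone use of \eqref{eq:radialscaling}, namely that the bound holds for $\delta(a,b)\le t$ and not only for $\delta(a,b)=t$.
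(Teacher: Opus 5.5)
Your proof is correct and follows the paper's argument essentially step for step. Admissibility gives $|L_{12}|^2/(L_{11}L_{22})\le 1-m^2$, the target determinant is rewritten through $\delta(\lambda_1/\sqrt2,\lambda_2/\sqrt2)$, and the chain of Schwarz--Pick, \eqref{eq:radialscaling} with monotonicity of $\mathcal F$, and \eqref{eq:rightanglegeometry} closes it; your numerator/denominator monotonicity check, the unimodular-constant caveat, and the explicit diagonal-congruence computation for automorphisms fill in details the paper leaves implicit.
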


\begin{proof}
The assertion is immediate if a diagonal entry of $L$ vanishes, so assume
$L_{11}L_{22}>0$.  Positivity in \eqref{eq:twonodeadmissible} gives
\begin{equation}\label{eq:Ledgebound}
 \frac{|L_{12}|^2}{L_{11}L_{22}}
 \le 1-\max\{\delta(c_1,c_2)^2,\delta(d_1,d_2)^2\}.
\end{equation}
Schwarz--Pick, \cref{lem:twonodegeometry}, and monotonicity of
$\mathcal F$ give
\begin{align*}
 \delta(\lambda_1/\sqrt2,\lambda_2/\sqrt2)
 &\le\mathcal F(\delta(\lambda_1,\lambda_2))
 \le\mathcal F(\delta(w_1,w_2))\\
 &\le\max\{\delta(c_1,c_2),\delta(d_1,d_2)\}.
\end{align*}
Here monotonicity is immediate by differentiating \eqref{eq:radialF}; its
derivative has the positive numerator
$3+\sqrt{1-t^2}+t^2/\sqrt{1-t^2}$ on $0\le t<1$.
Using
\[
 \frac{|2-\bar\lambda_1\lambda_2|^2}
 {(2-|\lambda_1|^2)(2-|\lambda_2|^2)}
 =\frac1{1-\delta(\lambda_1/\sqrt2,
                         \lambda_2/\sqrt2)^2},
\]
inequality \eqref{eq:Ledgebound} is exactly the nonnegativity of the
determinant in \eqref{eq:twonodetarget}.  Its diagonal entries are positive,
so the target is positive semidefinite.
\end{proof}

\section{A complete symmetric three-node theorem}\label{sec:threenode}

We next record a finite but genuinely three-node instance of the sharp
right-angle multiplier problem.  Besides giving a new exact family, the
proof identifies the first algebraic obstruction beyond the two-node
theorem: a single phase on a rank-$(2,2)$ extreme face.

For $0<a<1$, put
\begin{equation}\label{eq:threecoords}
 \rho=(-a,0,a),\qquad
 z_i=\frac{2\rho_i}{1+\rho_i^2},\qquad
 u_i=\frac{\rho_i+i}{\sqrt2},\qquad
 v_i=\frac{\rho_i-i}{\sqrt2}.
\end{equation}
For a Hermitian $3\times3$ matrix $L$, all products below are entrywise.

\begin{theorem}[Symmetric three-node completion]\label{thm:symmetricthree}
If
\begin{equation}\label{eq:threeadmissible}
 P=(1-\bar u_i u_j)_{ij}\circ L\succeq0,\qquad
 Q=(1-\bar v_i v_j)_{ij}\circ L\succeq0,
\end{equation}
then
\begin{equation}\label{eq:threetarget}
                    (2-\bar z_i z_j)_{ij}\circ L\succeq0.
\end{equation}
Thus the identity disk multiplier has norm at most $\sqrt2$ on the complete
right-angle admissible-kernel cone over the nodes \eqref{eq:threecoords}.
\end{theorem}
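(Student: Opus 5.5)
The plan follows the abstract's outline. First reduce to extreme rays of the admissible cone, then handle them by rank. The set $\mathcal K$ of Hermitian $L$ satisfying \eqref{eq:threeadmissible} is a closed convex cone, and the target \eqref{eq:threetarget} is linear in $L$. It therefore suffices to verify \eqref{eq:threetarget} on the extreme rays of $\mathcal K$. Write $K_u=(1-\bar u_iu_j)$ and $K_v=(1-\bar v_iv_j)$. These are Pick kernels of three distinct points of $\mathbb D$ (here $|u_i|^2=(1+a^2)/2<1$), hence positive definite, so $L\mapsto(K_u\circ L,K_v\circ L)$ is injective. A standard face-dimension count then applies. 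The face of $\mathcal K$ through $L$ is determined by $\ker P$ and $\ker Q$, and on it $P$ and $Q$ range over Hermitian matrices supported on $\operatorname{Ran}P$ and $\operatorname{Ran}Q$. Combined with the linear coupling through the common $L$, counting real dimensions ($9$ for $L$ against $(\operatorname{rk}P)^2+(\operatorname{rk}Q)^2$ minus $9$) shows that an extreme ray satisfies $(\operatorname{rk}P)^2+(\operatorname{rk}Q)^2\le 9+1$. The possible rank pairs are therefore $(1,k)$, $(k,1)$ with $k\le3$, and $(2,2)$.

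\textbf{Rank-one faces.} Suppose $P=\bar x x^{\mathsf T}$ has rank one, so that $L=K_u^{\circ-1}\circ\bar xx^{\mathsf T}$ entrywise. Then $L$ is a rank-one kernel divided by a Pick kernel. I would use the fact that $K_u\circ L\succeq0$ with $K_u\circ L$ of rank one forces $L$ to have the structure of a Szeg\H{o}-type kernel, namely $L_{ij}=\bar x_ix_j/(1-\bar u_iu_j)$. The target then becomes a scalar Pick interpolation statement. On the curve $q(u,v)=0$ of \cref{prop:andocentral}, with $z=(u+v)/\sqrt2$, we have $z_i=(u_i+v_i)/\sqrt2$. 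The target \eqref{eq:threetarget} with this $L$ is equivalent to positivity of a $3\times3$ Pick matrix for the function $z/\sqrt2$ composed back to the $u$-variable. This means the map $u\mapsto z(u)/\sqrt2$, where $v$ is determined by $q=0$ through the linear-fractional relation $v=u/(1+i\sqrt2u)$, must be a Schur function on the disk. That follows from the scalar maximum principle (the nodes correspond to a sector point, and the right-angle image stays in the disk of radius $\sqrt2$). The remaining condition $Q\succeq0$ is then not even needed. The same argument works with $u$ and $v$ interchanged.

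\textbf{Rank-$(2,2)$ face.} I expect this to be the main obstacle. Here $\ker P=\mathbb C x$ and $\ker Q=\mathbb C y$. The symmetry $\rho\mapsto-\rho$ (reversal $J_3$ combined with complex conjugation) swaps $u$ and $v$ up to sign, so I would use it to normalize. The linear conditions $(K_u\circ L)x=0$ and $(K_v\circ L)y=0$ cut $L$ down to a one-parameter ray. After scaling $x$ and $y$ and applying the symmetry, I expect two real parameters to remain, essentially one relative phase and one modulus ratio, together with the node parameter $a$. I would then solve for $L$ explicitly in terms of these. Positivity of $P$ and $Q$ reduces to their $2\times2$ compressions, i.e.\ determinant and trace conditions. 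The target reduces to $\det\bigl((2-\bar z_iz_j)\circ L\bigr)\ge0$ plus minors. After clearing denominators this should be a polynomial inequality in two variables on a box, obtained after substituting $a\in(0,1)$ and the phase through a rational (tangent half-angle) parametrization. I would try to prove it with a Bernstein expansion on the box, subdividing until all coefficients are nonnegative, in the same exact-rational style as \cref{lem:radialcontinue}. If a boundary equality case at $a\to0$ or $a\to1$ makes some coefficients vanish, I would factor out the corresponding power first. The faces of type $(1,k)$ are already covered by the rank-one argument, which completes the extreme-ray verification and hence the theorem.
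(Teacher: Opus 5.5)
Your extreme-ray reduction and the bound $(\operatorname{rk}P)^2+(\operatorname{rk}Q)^2\le 10$ match the paper. The rank-one step, however, fails, and its conclusion is false.

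First, $(u_i+v_i)/\sqrt2=\rho_i\neq z_i$, and $(u_i,v_i)$ does not lie on $q=0$. You must first pass, by disk automorphisms, to $c_i=\sqrt2\rho_i/(1+i\rho_i)$ and $d_i=\sqrt2\rho_i/(1-i\rho_i)$; this is harmless, because the Pick kernels change only by diagonal congruence.

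Second, and decisively, there is no Schur function behind the curve. The map $d=c/(1-i\sqrt2c)$ has a pole at $c=-i/\sqrt2\in\mathbb D$; in your variable, $z=(\sqrt2u-i)/(u(u-i\sqrt2))$ has a pole at $u=0$. The curve $q=0$ meets the bidisk only over a lune of the $c$-disk, so no maximum principle is available.

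Third, the claim that $Q\succeq0$ is not needed is false. Take $M=\bigl[(2-\bar z_iz_j)/(1-\bar c_ic_j)\bigr]$. Since $c_2=z_2=0$, its Schur complement at the middle node has diagonal entries $4a^4/(1-a^4)$ and off-diagonal entry $M_{13}-2$. At $a=1/2$ these are $4/15$ and $(-4+528i)/1025$, which give a negative determinant. Hence every full-support rank-one $P$ yields an $L$ with $P\succeq0$ that violates \eqref{eq:threetarget}.

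The missing idea is that $Q\succeq0$ must be used. For $P$ of rank one with full support, $Q\succeq0$ is equivalent to $\bigl[(1-\bar d_id_j)/(1-\bar c_ic_j)\bigr]\succeq0$. Scalar Pick interpolation then supplies some Schur function $\varphi$ with $\varphi(c_i)=d_i$; this is not the M\"obius curve map. On the three-point space with kernel $[(1-\bar c_ic_j)^{-1}]$ one gets $\|M_z\|=\|(M_c+M_\varphi)/\sqrt2\|\le\sqrt2$, which is exactly the target. Zero coordinates reduce to \cref{thm:twonode}. (Here $d_i=\bar c_i$, so that matrix has unimodular off-diagonal entries and is not of rank one. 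The full-support case is therefore actually empty, but only $Q\succeq0$ shows it.)

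Your rank-$(2,2)$ paragraph is a plan rather than a proof. The paper's argument proceeds as follows.
\begin{enumerate}
\item It normalizes to correlation matrices and gets the $2\times2$ target minors from \cref{thm:twonode}.
\item It parametrizes the rank-two $p$ by $x,y$ and a single phase $\zeta$, in which every determinant is affine.
\item It uses $\Delta(r\circ p)=0$ to pin $\zeta$ up to conjugation (\cref{lem:phaseelimination}), reducing everything to $F\ge0$ and $S\ge0$ in $(X,Y)$.
\item It certifies these by Bernstein controls, with a separate factorization for $a^2\le1/5$.
\item It handles the degenerate cases $C_0=0$ and a vanishing diagonal entry separately.
\end{enumerate}
None of this structure is present in your sketch.
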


We first give the two finite-dimensional ingredients.  For a Hermitian
correlation matrix $C$, set
\begin{equation}\label{eq:Delta}
 \Delta(C)=1-|c_{12}|^2-|c_{23}|^2-|c_{31}|^2
                   +2\Re(c_{12}c_{23}c_{31})=\det C.
\end{equation}

\begin{lemma}[Extreme-ray rank bound]\label{lem:rankbound}
Let $\mathcal R$ be an invertible real-linear Schur multiplier on
$M_n(\C)_{\rm sa}$, and let
\[
 \mathcal C=\{P=P^*:P\succeq0,\ \mathcal R(P)\succeq0\}.
\]
If $P$ generates an extreme ray of $\mathcal C$, then
\begin{equation}\label{eq:rankbound}
       \operatorname{rank}(P)^2+
       \operatorname{rank}(\mathcal R(P))^2\le n^2+1.
\end{equation}
\end{lemma}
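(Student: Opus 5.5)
This is the standard face-dimension argument for extreme rays of a spectrahedral cone. We show that if the inequality fails, the minimal face of $\mathcal C$ containing $P$ has real dimension at least two, which contradicts extremality.

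\emph{Setup.} Put $r=\operatorname{rank}P$ and $s=\operatorname{rank}\mathcal R(P)$. Let $V=\operatorname{Ran}P$ and $W=\operatorname{Ran}\mathcal R(P)$. Consider the real vector space
\[
 \mathcal E=\{H\in M_n(\C)_{\rm sa}:\ \operatorname{Ran}H\subset V,\ \operatorname{Ran}\mathcal R(H)\subset W\}.
\]
Every $H\in\mathcal E$ is a feasible perturbation direction. Indeed, $H$ is supported in $V$, and $P$ is positive definite on $V$, so $P\pm\epsilon H\succeq0$ for small $\epsilon>0$. The same reasoning, applied to $\mathcal R(P)\pm\epsilon\mathcal R(H)$ on $W$, uses real linearity of $\mathcal R$. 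Hence $P\pm\epsilon H\in\mathcal C$. If $P$ spans an extreme ray, the identity $P=\tfrac12(P+\epsilon H)+\tfrac12(P-\epsilon H)$ forces both summands onto the ray $\mathbb R_{\ge0}P$. Therefore $H\in\mathbb R P$, so $\dim_{\mathbb R}\mathcal E\le1$.

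\emph{Dimension count.} The Hermitian operators supported in $V$ form a real space $\mathcal H_V$ of dimension $r^2$. The condition $\operatorname{Ran}\mathcal R(H)\subset W$ says that the compression of $\mathcal R(H)$ to the complement of $W$ vanishes. Concretely, with $\Pi$ the projection onto $W^\perp$, we need $\Pi\mathcal R(H)=0$, and it suffices to require $\Pi\mathcal R(H)\Pi=0$ together with $(1-\Pi)\mathcal R(H)\Pi=0$. This is a little wasteful. The efficient form of the condition is that $\mathcal R(H)$ lies in $\mathcal H_W$. Since $\mathcal R$ is invertible, $\mathcal E\cong\mathcal R(\mathcal H_V)\cap\mathcal H_W$ as real subspaces of $M_n(\C)_{\rm sa}$, which has real dimension $n^2$. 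The Grassmann formula then gives
\[
 1\ge\dim\mathcal E\ge r^2+s^2-n^2,
\]
which is exactly \eqref{eq:rankbound}.

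\emph{Remaining checks.} First, the convention for extreme rays: any $P\neq0$ spanning one works, and $\mathcal E\ni P$ is consistent with the bound. Second, the perturbation claim. The small-$\epsilon$ positivity for an operator supported in the range of a PSD matrix is standard, because $P$ restricted to $V$ has smallest eigenvalue bounded below. The one place requiring care is that invertibility of $\mathcal R$ is what makes the dimension of $\mathcal R(\mathcal H_V)$ equal to $r^2$. The Schur-multiplier structure is not otherwise needed. I expect no real obstacle: the only subtle point is phrasing the range condition as membership in the subspace $\mathcal H_W$, so that the intersection formula applies directly.
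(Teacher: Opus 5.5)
Your proposal is correct and is essentially the paper's argument: the paper likewise bounds $\dim_{\mathbb R}\bigl(\mathcal S_P\cap\mathcal R^{-1}(\mathcal S_Q)\bigr)\ge\operatorname{rank}(P)^2+\operatorname{rank}(\mathcal R(P))^2-n^2$, using the real spans of the minimal faces at the two range projections. It then shows that any $H$ in this intersection independent of $P$ would give $P\pm\varepsilon H\in\mathcal C$ and split the ray; your space $\mathcal E$ is exactly this intersection, merely transported by $\mathcal R$ before the Grassmann formula is applied.
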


\begin{proof}
Let $E_P,E_Q$ be the range projections of $P$ and
$Q=\mathcal R(P)$.  The real spans of their minimal positive-semidefinite
faces are
\[
 \mathcal S_P=\{H=H^*:H=E_PHE_P\},\qquad
 \mathcal S_Q=\{K=K^*:K=E_QKE_Q\},
\]
of dimensions $\operatorname{rank}(P)^2$ and
$\operatorname{rank}(Q)^2$.  Hence
\[
 \dim\bigl(\mathcal S_P\cap\mathcal R^{-1}(\mathcal S_Q)\bigr)
       \ge \operatorname{rank}(P)^2+
          \operatorname{rank}(Q)^2-n^2.
\]
If this intersection contained a Hermitian $H$ independent of $P$, then
$P\pm\varepsilon H$ and
$Q\pm\varepsilon\mathcal R(H)$ would be positive for sufficiently small
$\varepsilon>0$: on their ranges $P,Q$ are positive definite, and the
perturbations vanish on their kernels.  This would split the ray of $P$.
The intersection is therefore one-dimensional, which proves
\eqref{eq:rankbound}.
\end{proof}

\begin{lemma}[Exact phase elimination]\label{lem:phaseelimination}
Let $p$ be a rank-at-most-two correlation matrix.  After diagonal unitary
conjugation, write
\begin{equation}\label{eq:phaseparam}
 p_{12}=x,\quad p_{23}=y,\quad
 p_{31}=xy+\sqrt{(1-x^2)(1-y^2)}\,\zeta,\qquad |\zeta|=1,
\end{equation}
where $0\le x,y\le1$.  Put $X=x^2$, $Y=y^2$ and
$C_0=2xy\sqrt{(1-X)(1-Y)}$.  For a normalized Hermitian multiplier $m$,
define
\begin{align}
 M_m&=m_{12}m_{23}m_{31},&
 d_m&=M_m-|m_{31}|^2,\label{eq:md}\\
 A_m&=1-|m_{12}|^2X-|m_{23}|^2Y\notag\\
 &\quad-|m_{31}|^2\{XY+(1-X)(1-Y)\}
             +2XY\Re M_m.\label{eq:Am}
\end{align}
Then
\begin{equation}\label{eq:phaseaffine}
                 \Delta(m\circ p)=A_m+C_0\Re(d_m\zeta).
\end{equation}
Suppose $C_0>0$, $d_r\ne0$, and $\Delta(r\circ p)=0$.  With
\begin{align}\label{eq:FHS}
 \gamma&=\frac{d_t}{d_r},&
 F&=A_t-A_r\Re\gamma,\notag\\
 H&=C_0^2|d_r|^2-A_r^2,&
 S&=F^2-H(\Im\gamma)^2.
\end{align}
both admissible phases satisfy $\Delta(t\circ p)\ge0$ if and only if
\begin{equation}\label{eq:phasecriterion}
                         F\ge0,\qquad S\ge0.
\end{equation}
\end{lemma}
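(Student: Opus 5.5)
The plan is to prove \eqref{eq:phaseaffine} by direct expansion of the determinant formula \eqref{eq:Delta}. The phase criterion then reduces to an elementary statement about two complex numbers on a fixed circle. No earlier result is needed beyond the definition of $\Delta$. Note that the parametrization \eqref{eq:phaseparam} (with $x,y$ real after the diagonal unitary conjugation) is exactly the general form of a correlation matrix with $\det p=0$, but the identity holds for every unimodular $\zeta$ regardless.

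\emph{Step 1: the affine identity.} For a normalized multiplier (unit diagonal) the matrix $m\circ p$ is again a Hermitian matrix with unit diagonal and off-diagonal entries $m_{12}x$, $m_{23}y$, $m_{31}p_{31}$. Substituting into \eqref{eq:Delta} gives
\[
 \Delta(m\circ p)=1-|m_{12}|^2X-|m_{23}|^2Y-|m_{31}|^2|p_{31}|^2
   +2\Re\bigl(M_m\,xy\,p_{31}\bigr).
\]
I then use the two elementary expansions
\[
 |p_{31}|^2=XY+(1-X)(1-Y)+C_0\Re\zeta,
 \qquad
 2xy\,p_{31}=2XY+C_0\zeta.
\]
Collecting the $\zeta$-free terms reproduces $A_m$ in \eqref{eq:Am}. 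The $\zeta$-dependent terms are
\[
 C_0\Re(M_m\zeta)-|m_{31}|^2C_0\Re\zeta=C_0\Re(d_m\zeta),
\]
since $|m_{31}|^2$ is real. This proves \eqref{eq:phaseaffine}.

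\emph{Step 2: parametrizing the admissible phases.} Put $\omega=d_r\zeta$, so $|\omega|=|d_r|>0$. By \eqref{eq:phaseaffine} with $m=r$, the hypothesis $\Delta(r\circ p)=0$ says exactly $\Re\omega=-A_r/C_0$. A solution exists only when $H\ge0$, and then the admissible phases are the two values
\[
 \omega_\pm=\frac{-A_r\pm i\sqrt H}{C_0}.
\]
These coincide when $H=0$.

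\emph{Step 3: evaluating the second multiplier.} Since $d_t\zeta=\gamma\omega$, \eqref{eq:phaseaffine} with $m=t$ gives
\[
 \Delta(t\circ p)=A_t+C_0\Re(\gamma\omega_\pm)
   =A_t-A_r\Re\gamma\mp\sqrt H\,\Im\gamma
   =F\mp\sqrt H\,\Im\gamma .
\]
Both values are nonnegative if and only if $F\ge\sqrt H\,|\Im\gamma|$. Because $H\ge0$, this is equivalent to $F\ge0$ together with $F^2\ge H(\Im\gamma)^2$, i.e.\ \eqref{eq:phasecriterion}.

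\emph{Expected obstacle.} The only delicate point is bookkeeping: the cross term $2\Re(M_mxyp_{31})$ has to be split correctly into its $XY$ part and its $C_0\zeta$ part. One must also record that $H\ge0$ is implicit in the existence of an admissible phase, which is what makes the squaring in Step 3 reversible. The rest is routine.
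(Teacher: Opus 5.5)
Your proposal is correct and follows the paper's proof essentially line by line. You expand $\Delta$ to get the affine formula, set $d_r\zeta$ as the new variable so that $\Delta(r\circ p)=0$ fixes its real part at $-A_r/C_0$ and leaves two conjugate imaginary parts, and read off $\Delta(t\circ p)=F\mp\sqrt H\,\Im\gamma$; feasibility gives $H\ge0$, which makes the squaring reversible. Your Step~1 simply makes explicit, via the expansions of $|p_{31}|^2$ and $2xy\,p_{31}$, the computation the paper states in one line.
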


\begin{proof}
Expanding \eqref{eq:Delta} after \eqref{eq:phaseparam} gives
\eqref{eq:phaseaffine}.  Put $\eta=d_r\zeta$.  The equation
$\Delta(r\circ p)=0$ fixes
\[
 \Re\eta=-A_r/C_0,\qquad |\eta|=|d_r|,
\]
and hence leaves two conjugate choices for its imaginary part.  Since
$d_t\zeta=\gamma\eta$, their smaller target determinant is exactly
\begin{equation}\label{eq:phasemin}
                       F-\sqrt H\,|\Im\gamma|.
\end{equation}
Feasibility gives $H\ge0$.  The expression in \eqref{eq:phasemin} is
nonnegative precisely when $F\ge0$ and its square is nonnegative, which is
\eqref{eq:phasecriterion}.  The sign condition on $F$ prevents an
extraneous conclusion from squaring.
\end{proof}

\begin{proof}[Proof of \cref{thm:symmetricthree}]
All entries of $k_u=(1-\bar u_i u_j)$ are nonzero.  The change of variable
$P=k_u\circ L$ identifies \eqref{eq:threeadmissible} with
\[
 \mathcal C=\{P\succeq0:\mathcal R(P)=r\circ P\succeq0\},
 \qquad r_{ij}=\frac{1-\bar v_i v_j}{1-\bar u_i u_j}.
\]
Intersecting this cone with
$\operatorname{tr}P+\operatorname{tr}\mathcal R(P)=1$ gives a compact base.
For a fixed vector, the
compression of the target in \eqref{eq:threetarget} is linear in $P$.
Thus a negative minimum, if one existed, would occur on an extreme ray.
By \cref{lem:rankbound}, its rank pattern is one of
\begin{equation}\label{eq:ranklist}
                (1,1),(1,2),(2,1),(2,2),(1,3),(3,1).
\end{equation}

We first treat the rank-$(2,2)$ pattern.  Positive diagonal congruence
normalizes $P,Q$ and the target to correlation matrices
$p,r\circ p,t\circ p$.  Their one- and two-index target minors are positive
by \cref{thm:twonode}.  It remains only to prove
$\Delta(t\circ p)>0$.

If a diagonal entry of $P$ or $Q$ vanishes, positivity makes the
corresponding row and column vanish; because the localization factors are
nonzero, the same is true of $L$ and the target.  This reduces directly to
\cref{thm:twonode}.  We may therefore assume all diagonal entries are
positive when making the correlation normalization above.

Put $q=a^2$.  Exact substitution of \eqref{eq:threecoords} into
\cref{lem:phaseelimination} gives
\begin{align}
 |t_{12}|^2=|t_{23}|^2
   &=\frac{1-q^2}{1+q^2},\label{eq:tshort}\\
 |t_{31}|^2
   &=\frac{(1-q)^2(q^2+4q+1)^2}
          {(q^2+1)^2(q^2+6q+1)},\label{eq:tlong}\\
 \Re d_r&=-\frac{32q^3}{(q+1)^2(q^2+6q+1)},\qquad
 |d_r|^2=\frac{64q^3}{(q+1)^2(q^2+6q+1)},\label{eq:drfamily}\\
 \Re\gamma&=\frac{|t_{31}|^2}{2},\qquad
 (\Im\gamma)^2=
 \frac{q(1-q)^4(q^2+4q-1)^2(q^2+4q+1)^2}
 {16(q^2+1)^4(q^2+6q+1)^2}.\label{eq:gammafamily}
\end{align}
The two quantities in \eqref{eq:phasecriterion} now have exact polynomial
certificates.  First, if
\[
 D_F=(q+1)^2(q^2+1)^2(q^2+6q+1)^2,
\]
the bilinear polynomial $D_FF$ has tensor Bernstein control matrix
\begin{equation}\label{eq:Fcontrols}
 \begin{pmatrix}
 32q^3(q+1)^2(q^2+6q+1)&
 2q^2(q+1)^2(q^2+1)(q^2+6q+1)^2\\
 2q^2(q+1)^2(q^2+1)(q^2+6q+1)^2&16q^3J(q)
 \end{pmatrix},
\end{equation}
where
\[
 J(q)=3q^6+22q^5+35q^4-12q^3+9q^2+6q+1.
\]
Its degree-six Bernstein controls on $[0,1]$ are
\[
 1,2,\frac{18}{5},\frac{26}{5},\frac{128}{15},\frac{64}{3},64.
\]
Every entry in \eqref{eq:Fcontrols} is therefore positive, so $F>0$.

Next write
\begin{equation}\label{eq:Snum}
 S=\frac{\mathcal N_q(X,Y)}
 {(q+1)^2(q^2+1)^4(q^2+6q+1)^2}.
\end{equation}
The bidegree-$(2,2)$ tensor Bernstein controls of $\mathcal N_q$ are the
symmetric matrix determined by
\begin{align*}
 b_{00}&=1024q^6(q+1)^2,\\
 b_{01}&=64q^5(q+1)^2(q^2+1)(q^2+6q+1),\\
 b_{02}&=4q^4(q+1)^2(q^2+1)^2(q^2+6q+1)^2,\\
 b_{11}&=-2q^4P_{10}(q),\\
 b_{12}&=32q^5(q^2+1)J(q),\\
 b_{22}&=256q^6P_1(q)P_2(q),
\end{align*}
where
\begin{align*}
 P_{10}(q)={}&q^{10}-2q^9-63q^8-224q^7-514q^6-732q^5\\
 &{}-66q^4-288q^3-127q^2-34q+1,\\
 P_1(q)&=q^3+3q^2-q+1,\\
 P_2(q)&=q^4+4q^3-2q^2+1.
\end{align*}
The Bernstein controls of $P_1,P_2$ on $[0,1]$ are respectively
\[
 (1,2/3,4/3,4),\qquad (1,1,2/3,1,4).
\]
For $q\ge1/33$,
$P_{10}(q)<q^{10}+1-34q\le1-33q\le0$; hence every $b_{ij}$ is positive.

For the remaining small parameters put $X=m+d$, $Y=m-d$.  Direct
factorization gives
\begin{equation}\label{eq:smallqfactor}
 \frac{\mathcal N_q(m+d,m-d)}{4q^4}=D_0(m,q)+d^2D_2(m,q),
\end{equation}
where
\begin{align*}
 D_0={}&32q[mq-m+q+1]\\
 &\quad\cdot[mq^3+3mq^2-3mq-m+2q+2]\\
 &\quad\cdot[mq^4+4mq^3-2mq^2-4mq+m+4q],\\
 D_2={}&4(q-1)^2(q^2+4q-1)(q^2+4q+1)\\
 &\quad\cdot[-8mq^3+8mq+q^4-8q-1].
\end{align*}
For $0<q\le1/5$ and $0\le m\le1$, the three factors in $D_0$ are
positive.  The two sign-changing factors in $D_2$ are both negative, so
$D_2\ge0$.  Thus $S>0$ for $q\le1/5$.  Since
$(0,1)=(0,1/5]\cup[1/33,1)$, \eqref{eq:phasecriterion} proves the desired
rank-$(2,2)$ determinant for all $0<a<1$.  If $C_0=0$, the second rank
condition and \eqref{eq:drfamily} force $X=0$ or $Y=0$; the remaining
determinant is affine and its endpoint gaps are
\[
 1-|t_{31}|^2=\frac{32q^3}{(q^2+1)^2(q^2+6q+1)}>0,\qquad
 1-|t_{12}|^2=\frac{2q^2}{1+q^2}>0.
\]

It remains to handle the five patterns in \eqref{eq:ranklist} with a
rank-one side.  Apply the disk automorphisms
\begin{equation}\label{eq:cdcoords}
 c_i=\frac{\sqrt2\rho_i}{1+i\rho_i},\qquad
 d_i=\frac{\sqrt2\rho_i}{1-i\rho_i}.
\end{equation}
They are automorphisms of the $u$- and $v$-disks, respectively, so their
Pick kernels differ from those in \eqref{eq:threeadmissible} by invertible
diagonal congruences.  Moreover
\begin{equation}\label{eq:curveidentity}
 c_i-d_i+i\sqrt2c_id_i=0,\qquad
 z_i=\frac{c_i+d_i}{\sqrt2},\qquad
 1-\bar z_i z_j=(1-\bar c_i c_j)(1-\bar d_i d_j).
\end{equation}

Suppose the $c$-localized matrix has rank one and full support.  Diagonal
congruence reduces positivity of the $d$-localized matrix to
\[
 \left[\frac{1-\bar d_i d_j}{1-\bar c_i c_j}\right]\succeq0.
\]
The scalar Pick theorem supplies a Schur function $\varphi$ with
$\varphi(c_i)=d_i$.  On the finite reproducing-kernel space with Gram
matrix $[(1-\bar c_i c_j)^{-1}]$, both $c$ and $d=\varphi(c)$ are
contractive multipliers.  Since $z=(c+d)/\sqrt2$,
\[
                          \|M_z\|\le\sqrt2.
\]
The multiplier Pick criterion is exactly \eqref{eq:threetarget}.  If the
rank-one factor has a zero coordinate, the corresponding target row and
column vanish and the assertion reduces to at most two nodes, where
\cref{thm:twonode} applies.  Interchanging
$c,d$ treats a rank-one second side.  Thus every extreme ray in
\eqref{eq:ranklist} satisfies the target.  Convexity of the compact base and
homogeneity now prove \eqref{eq:threetarget} for every admissible $L$.
\end{proof}

The phase calculation also gives a genuinely nonsymmetric patch.  Its
proof is computer assisted only in the finite sense that a displayed family
of rational Bernstein controls is reconstructed and checked in exact
arithmetic.

\begin{theorem}[Asymmetric three-node patch]\label{thm:asymmetricthree}
Let
\[
\rho=(-a,0,b),\qquad \frac16\le a,b\le\frac56,
\]
and put
$z_i=2\rho_i/(1+\rho_i^2)$,
$u_i=(\rho_i+i)/\sqrt2$, and $v_i=(\rho_i-i)/\sqrt2$.  Every
Hermitian $L$ satisfying \eqref{eq:threeadmissible} obeys
\eqref{eq:threetarget}.  Hence the identity multiplier has norm at most
$\sqrt2$ on the complete right-angle admissible-kernel cone for this
two-parameter family.
\end{theorem}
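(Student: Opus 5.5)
The argument follows the proof of \cref{thm:symmetricthree}, with the symmetric closed forms replaced by exact two-parameter certificates on the square $[1/6,5/6]^2$.

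First, set $P=k_u\circ L$ with $k_u=(1-\bar u_iu_j)$; no entry of $k_u$ vanishes because $|u_i|<1$. Under this substitution the admissible cone becomes $\mathcal C=\{P\succeq0:\ r\circ P\succeq0\}$. Intersecting with $\operatorname{tr}P+\operatorname{tr}(r\circ P)=1$ gives a compact base. Since each compression of the target is linear in $P$, it suffices to check extreme rays. By \cref{lem:rankbound} with $n=3$, the rank patterns are those in \eqref{eq:ranklist}.

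The five patterns with a rank-one side are handled exactly as before. I would use the automorphisms \eqref{eq:cdcoords}, which need not be symmetric, and note that the identity \eqref{eq:curveidentity} holds node by node for arbitrary real $\rho_i$. Scalar Pick interpolation then gives a Schur $\varphi$ with $\varphi(c_i)=d_i$. This yields $\|M_z\|\le\sqrt2$ on the rank-one face; zero coordinates reduce to \cref{thm:twonode}. Vanishing diagonal entries on the $(2,2)$ face also reduce to \cref{thm:twonode}. That theorem likewise gives positivity of all one- and two-index target minors.

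The substantive case is the rank-$(2,2)$ determinant $\Delta(t\circ p)$. I would parametrize $p$ by \eqref{eq:phaseparam} and apply \cref{lem:phaseelimination}. Substituting $\rho=(-a,0,b)$ makes $|t_{ij}|^2$, $d_r$, $\gamma$ and $A_r,A_t$ explicit rational functions of $(a,b,X,Y)$, where $X,Y\in[0,1]$. The two quantities to certify are then $F\ge0$ and $S=F^2-H(\Im\gamma)^2\ge0$ from \eqref{eq:phasecriterion}.

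After clearing positive denominators, $F$ and the numerator of $S$ are polynomials in $(a,b)$ and $(X,Y)$ of bounded bidegree. I would certify their positivity on $[1/6,5/6]^2\times[0,1]^2$ by tensor Bernstein expansion. Where the controls fail to be positive, I would subdivide the $(a,b)$ square into a fixed grid of rational subrectangles, nested where necessary, and recompute exact rational controls on each cell. Positivity of all controls on a cell proves positivity on that cell.

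The degenerate case $C_0=0$ must be treated separately. There the determinant is affine in the remaining variable, so it suffices to check that the endpoint gaps $1-|t_{31}|^2$ and $1-|t_{12}|^2$ are positive rational functions on the square; I would again confirm this with Bernstein controls. Convexity then finishes the proof.

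The main obstacle is the $S$ certificate. As $X,Y\to0$ the margin degenerates, just as the small-$q$ regime did in the symmetric proof, where the plain Bernstein controls $b_{11}$ failed. I would first try the coordinates $X=m+d$, $Y=m-d$ together with an asymmetric analogue of the factorization \eqref{eq:smallqfactor}. If that fails, I would split off a neighbourhood of $X=Y=0$ and prove positivity of the leading Taylor form there, using exact Bernstein control checks in the verifier.
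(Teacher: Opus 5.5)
Your proposal is correct and is essentially the paper's proof. It uses the same extreme-ray reduction via \cref{lem:rankbound}, scalar Pick interpolation on the rank-one faces, and the phase criterion $F\ge0$, $S\ge0$ of \cref{lem:phaseelimination} certified by exact Bernstein controls. On this box a single Bernstein layer in $(X,Y)$ already works: its thirteen controls reduce to the signs of four parameter polynomials, one settled by a global sum of squares and three by one Bernstein expansion on the whole $(a,b)$-square. So neither subdivision nor a separate analysis near $X=Y=0$ is needed.

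One small slip: with $\rho=(-a,0,b)$, the $C_0=0$ case with $X=0$ has endpoint gaps $1-|t_{31}|^2$ and $1-|t_{23}|^2$. You therefore need all three edge gaps, not just the two you listed; the extra one is $1-|t_{23}|^2=2b^4/(1+b^4)>0$.
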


\begin{proof}
The compact-base and extreme-ray arguments above are unchanged.  Every
extreme ray with a rank-one side is handled by
\eqref{eq:cdcoords}--\eqref{eq:curveidentity} and scalar Pick interpolation,
and a zero diagonal reduces to at most two nodes.  It remains to treat the
rank-$(2,2)$ pattern.

Put
\begin{align*}
 A&=1+a^4,&B&=1+b^4,\\
 G&=a^2b^2+a^2+4ab+b^2+1,&
 K&=a^2b^2+(a+b)^2+1.
\end{align*}
Exact normalization of the three target edges gives
\begin{equation}\label{eq:asymedges}
 \alpha=\frac{1-a^4}{A},\qquad
 \beta=\frac{1-b^4}{B},\qquad
 \gamma_e=\frac{(1-a^2)(1-b^2)K^2}{ABG}.
\end{equation}
For the phase quantities of \cref{lem:phaseelimination}, direct kernel
substitution gives
\begin{align}\label{eq:asymphase}
 \Re d_r&=-\frac{8a^2b^2(a+b)^2}{(1+a^2)(1+b^2)G},\\
 |d_r|^2&=\frac{16a^2b^2(a+b)^2}{(1+a^2)(1+b^2)G},\notag\\
 \Re\gamma&=\frac{\gamma_e}{2},\qquad
 \Im\gamma=\frac{ab(1-a^2)(1-b^2)EK}
 {2(a+b)ABG},\notag
\end{align}
where $E=a^2b^2+(a+b)^2-1$.

Regard the numerators of $F$ and $S$ in \eqref{eq:FHS} as polynomials in
$X,Y\in[0,1]$.  Their tensor Bernstein control arrays have sizes $2$ by
$2$ and $3$ by $3$.  Exact factorization reduces all thirteen controls to
positive elementary factors and four parameter polynomials $U,V,W,Z$; the
only sign requirements are
\begin{equation}\label{eq:asymUV}
                         U>0,\qquad V>0,\qquad W<0,\qquad Z>0.
\end{equation}
The first one has the global sum-of-squares identity
\begin{equation}\label{eq:asymU}
 U=4a^2b^2+(a-b)^2\{(a+b)^2+4ab+(1-ab)^2\}>0.
\end{equation}
For the remaining three, set
$a=1/6+2\sigma/3$, $b=1/6+2\tau/3$.  Their exact tensor Bernstein
expansions on $(\sigma,\tau)\in[0,1]^2$ have respectively $49$, $169$,
and $81$ rational controls.  Every control of $V,-W,Z$ is positive; their
least values are
\begin{equation}\label{eq:asymcontrols}
 \frac{851394073}{725594112},\qquad
 \frac{177918726900023}{4738381338321616896},\qquad
 \frac{76259197261}{705277476864}.
\end{equation}
The exact verifier cited in the reproducibility statement reconstructs
\eqref{eq:asymedges}--\eqref{eq:asymphase}, divides the thirteen controls
by their displayed positive factors with zero remainder, and computes all
$299$ rational controls from the monomial-to-Bernstein formula.  Thus
\eqref{eq:asymUV} holds and the first Bernstein layer gives $F>0$, $S>0$
for every $X,Y\in[0,1]$.

If $C_0>0$, \cref{lem:phaseelimination} now gives
$\Delta(t\circ p)>0$.  If $C_0=0$, the second rank condition forces
$X=0$ or $Y=0$, and the target determinant is respectively
\[
 (1-\gamma_e)(1-Y)+(1-\beta)Y,
 \qquad
 (1-\gamma_e)(1-X)+(1-\alpha)X.
\]
These are positive by \eqref{eq:asymedges} and
$ABG-(1-a^2)(1-b^2)K^2=2(a+b)^2U$.  The target minors of order two are
positive for the same reason.  Hence every rank-$(2,2)$ extreme ray, and
therefore the full cone, satisfies \eqref{eq:threetarget}.
\end{proof}

\begin{remark}\label{rem:threescope}
\Cref{thm:symmetricthree,thm:asymmetricthree} control whole
admissible-kernel cones, not only rank-$(2,2)$ faces.  Their scope is
nevertheless precise: the multiplier is the identity, and the nonsymmetric
theorem covers only the displayed parameter box.  The general three-node
problem still contains the rank-$(2,2)$ phase inequality outside this box
and a degree-two Blaschke determinant for arbitrary Schur data.  The
arbitrary-node right-angle constant therefore remains open.
\end{remark}

\section{A complex-zero quadratic singular family}
\label{sec:realzero}

The rank-one localized faces reduce to scalar Pick matrices on three nodes,
and their extreme scalar data are quadratic Blaschke products.  We now prove
the sharp target on a full two-real-dimensional disk of such products.  Put
\begin{equation}\label{eq:realzeroparams}
 \theta\in\mathbb R,\qquad
 \eta=e^{i\theta},\qquad p=\cos\theta,\qquad q=\sin\theta,
 \qquad \frac1{\sqrt2}<q\le1.
\end{equation}
Let $c_0=d_0=w_0=0$, let $c_1,c_2$ be the roots of
\begin{equation}\label{eq:realzeroroots}
 -i\sqrt2\eta c^2+\eta c-1=0,
\end{equation}
and set
\begin{equation}\label{eq:realzerocurve}
 d_j=\eta c_j^2=\frac{c_j}{1-i\sqrt2c_j},\qquad
 w_j=\frac{c_j+d_j}{\sqrt2}.
\end{equation}
For $a\in\mathbb D$, define
\begin{equation}\label{eq:realzeroB}
 B_a(w)=w\frac{w-a}{1-\bar a w}.
\end{equation}

\begin{theorem}[Complex-zero quadratic singular family]
\label{thm:realzero}
For the nodes in \eqref{eq:realzeroparams}--\eqref{eq:realzerocurve},
\begin{equation}\label{eq:realzeroR}
 R_{jk}=\frac{1-\bar d_jd_k}{1-\bar c_jc_k}
\end{equation}
is positive semidefinite of rank two.  For every $a\in\mathbb D$, the actual
target
\begin{equation}\label{eq:realzeroT}
 T_{jk}=\frac{2-\overline{B_a(w_j)}B_a(w_k)}
              {1-\bar c_jc_k},\qquad 0\le j,k\le2,
\end{equation}
is positive definite.  The same conclusion holds for a unimodular multiple
of $B_a$.
\end{theorem}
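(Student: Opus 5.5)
Our plan is to make the three-node structure completely explicit and then reduce positive definiteness of $T$ to one scalar determinant inequality in $a$, with the phase parameter $\theta$ entering only through $p,q$.

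\emph{Step 1: the kernel $R$.} The roots of \eqref{eq:realzeroroots} satisfy $c_1+c_2=1/(i\sqrt2)$ and $c_1c_2=i/(\sqrt2\eta)$. From $d_j=c_j/(1-i\sqrt2c_j)$ one checks the curve relation $c_j-d_j+i\sqrt2c_jd_j=0$, so each triple $(c_j,d_j,w_j)$ lies on the curve \eqref{eq:rightanglecurve2}. Before anything else we confirm that $|c_j|<1$ exactly when $q>1/\sqrt2$. The map $\varphi(c)=\eta c^2$ is a Schur function that interpolates $c_j\mapsto d_j$ at all three nodes, including $0\mapsto0$. Hence $R$ is the Pick matrix of a Blaschke product of degree two. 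That makes it positive semidefinite, and rank two because the three nodes are distinct and the degree is two. We will record an explicit kernel vector $\kappa$ of $R$.

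\emph{Step 2: reduction to a determinant.} Write $K=[(1-\bar c_jc_k)^{-1}]$, which is positive definite. Then $T=2K-\beta^*K\beta$ with $\beta=\operatorname{diag}(B_a(w_j))$, where $B_a(w_0)=0$. The diagonal entries of $T$ are positive. The $2\times2$ minors that involve node $0$ are positive by \cref{thm:twonode}, which applies because the identity kernel $L=K$ is admissible (its $c$-localization is the all-ones matrix and its $d$-localization is $R$). That theorem gives only semidefiniteness, but strictness follows from strictness of Schwarz--Pick for a nonconstant $B_a$ together with $\mathcal F(t)<t$. The remaining obstacle is therefore $\det T>0$. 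Since $w_0=0$ and $B_a(0)=0$, the first row of $T$ is $(2,2,2)$. Eliminating node $0$ reduces $\det T>0$ to positivity of a $2\times2$ Schur complement in nodes $1,2$. This amounts to a single inequality
\[
 \Bigl(2-\tfrac{|B_1|^2}{\cdot}\Bigr)\Bigl(2-\tfrac{|B_2|^2}{\cdot}\Bigr)>\bigl|\ldots\bigr|^2 ,
\]
where $B_j=B_a(w_j)$ enter through $|B_1|^2$, $|B_2|^2$ and $\bar B_1B_2$.

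\emph{Step 3: making $B_a$ tractable.} Both $T$ and the Schur complement depend on $a$ only through $B_a(w_1)$ and $B_a(w_2)$. We write $B_j=w_j\,b_a(w_j)$, where $b_a$ is a disk automorphism. By Pick's two-point condition, the pair $(b_a(w_1),b_a(w_2))$ ranges over the pairs $(\mu_1,\mu_2)\in\mathbb D^2$ with $\delta(\mu_1,\mu_2)=\delta(w_1,w_2)$. Including the unimodular multiple allowed by the statement, this is a pair with one common phase, and that phase cancels from $T$. So the claim becomes a statement about all pairs at fixed pseudohyperbolic distance. We parametrize them as $\mu_1=s$ real and $\mu_2=(s+\xi)/(1+s\xi)$ with $|\xi|=\delta(w_1,w_2)$, exactly as in the proof of \cref{lem:twonodegeometry}. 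The Schur complement determinant then becomes an explicit function of $s\in[0,1)$, the phase of $\xi$, and $(p,q)$.

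\emph{Main obstacle.} We expect the hard part to be proving this three-parameter positivity uniformly in $q\in(1/\sqrt2,1]$, especially near the degenerate end $q\downarrow1/\sqrt2$, where $|c_j|\to1$ and the margin vanishes. The first approach to try is the one behind \cref{lem:phaseelimination}. The determinant is affine in $\Re(\text{phase}\cdot d)$, so we minimize the phase of $\xi$ in closed form. That leaves a polynomial in $s^2$ and $q^2$, which we would clear of the common positive denominators, including the factor $2q^2-1$. We would then check its sign by a tensor Bernstein expansion after substituting $q^2=\tfrac12+\tfrac12 u$ with $u\in(0,1]$, factoring out the positive elementary factors as in the proofs of \cref{thm:symmetricthree,thm:asymmetricthree}. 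If the Bernstein controls fail to be positive near $u=0$, we would fall back on a leading-order expansion in $u$ there, proving that the coefficient of the lowest power of $u$ is strictly positive.
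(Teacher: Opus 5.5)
\textbf{Verdict: the proposal has a genuine gap.} Steps 1 and 2 are correct, but the decisive positivity inequality is never proved, and the structural claim your planned certificate relies on is false.

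\textbf{What matches the paper.} Steps 1 and 2 follow the paper's route. The relation $d_j=\eta c_j^2$ gives $R_{jk}=1+\bar c_jc_k$, a rank-two Gram matrix; your degree-two Pick argument is an equivalent way to see this. Since the first row of $T$ is $(2,2,2)$, the Schur complement at node $0$ is $\operatorname{diag}(\bar c_j)\,M\,\operatorname{diag}(c_k)$, where $M_{jk}=(2-\bar h_jh_k)/(1-\bar c_jc_k)$ and $h_j=B_a(w_j)/c_j$.

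You do not need \cref{thm:twonode} for the diagonal of $M$. On the roots $w/c=(1+\eta c)/\sqrt2$, so $|h_j|<\sqrt2$ follows directly. Two small corrections: $c_1c_2=-i/(\sqrt2\eta)$, not $+i/(\sqrt2\eta)$; and since $|c_1c_2|^2=1/2$, only one node reaches the circle as $q\downarrow1/\sqrt2$.

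Your reparametrization by the image pair $(\mu_1,\mu_2)$ is valid: the maps $\gamma b_a$ are exactly the disk automorphisms, which act simply transitively on pairs at fixed pseudohyperbolic distance. The phase elimination is also legitimate. After multiplying by $|1+s\xi|^2$, $\det M$ has the form $\alpha+\Re(\beta\xi)$, so positivity reduces to $\alpha>\delta(w_1,w_2)|\beta|$.

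\textbf{The gap.} The proposal stops exactly where the theorem's content begins. Normalizing $\mu_1=s\in\mathbb R$ singles out node $1$, so $\alpha$ and $\beta$ contain node-$1$ quantities separately from node-$2$ ones. For instance, $M_{11}=(2-|w_1/c_1|^2s^2)/(1-|c_1|^2)$ is a factor of the leading term of $\alpha$. At $s=0$ (that is, $a=w_1$) the expression is not symmetric under interchanging the roots: a numerical check at $\theta=\pi/3$ gives $|w_1/c_1|^2\ne|w_2/c_2|^2$.

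These node-specific quantities are not rational in $q$. The moduli $|c_1|^2,|c_2|^2$ equal $\frac18\bigl(1+\rho\pm\sqrt{(1+\rho)^2-32}\bigr)$ with $\rho=\sqrt{33-8\sqrt2q}$, and the inner discriminant vanishes at $q=1$. So after phase elimination you do not obtain ``a polynomial in $s^2$ and $q^2$'', and no Bernstein expansion in $(s,q)$ exists as described. The fallback of a leading-order expansion near $q=1/\sqrt2$ is a contingency, not a proof. Your parametrization would save one variable relative to the paper's $(q,x,v)$, but only by breaking the root symmetry that keeps the coefficients algebraically tractable.

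\textbf{The missing idea.} The paper keeps $a$, which is not tied to any node, as the parameter, and reduces to root-free quantities:
\begin{itemize}
\item On the two roots, $c^2$, $w$ and $w/c$ are affine in $c$. Hence $h$ has a unique affine representative $A+Lc$, with symmetric Cramer denominator $(1-\bar aw_1)(1-\bar aw_2)$.
\item The two-point identity $\det P_y=Z\Xi/(C(C-Z))$, with $\Xi=U-\ell C$, involves only symmetric data: $Z=\rho/2$, $C=(5+\rho)/4$, $U$ and $\ell$.
\item This yields $\det T=\rho N/(4H(\sqrt2q-1))$, where $N=E+puK$ is a polynomial over $\mathbb Q(\sqrt2)$ in $q$, $a=u+iv$ and $x=|a|^2$.
\end{itemize}
Only at that point does the proof become a finite exact certificate. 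One shows $E>0$ by reducing to $x=1$ and a definite quadratic in $v$. One then shows $F=E^2-(1-q^2)(x-v^2)K^2>0$ via its cubic Bernstein form in $s$, with $879$ exact controls. Two local estimates are also needed near $q=1/\sqrt2$, $v=0$ on the closure face $|a|=1$, where some controls vanish.
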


\begin{proof}
Vieta's formulas and a direct modulus calculation give, with
$\rho=\sqrt{33-8\sqrt2q}$,
\begin{equation}\label{eq:realzeroVieta}
 c_1+c_2=-\frac{i}{\sqrt2},\qquad
 c_1c_2=-\frac{i}{\sqrt2\eta},\qquad
 |c_1-c_2|^2=\frac\rho2,
\end{equation}
and $|c_1|^2+|c_2|^2=(1+\rho)/4$, $|c_1c_2|^2=1/2$.
Thus $1/2<|c_j|^2<1$.  The change
\[
 c=\frac{\sqrt2\xi}{1+i\xi},\qquad
 d=\frac{\sqrt2\xi}{1-i\xi},\qquad
 w=\frac{2\xi}{1+\xi^2}
\]
then shows that $c_j,d_j,w_j\in\mathbb D$.  Moreover
\[
 R_{jk}=\frac{1-\bar c_j^2c_k^2}{1-\bar c_jc_k}
       =1+\bar c_jc_k,
\]
so $R$ is the rank-two Gram matrix of the distinct vectors $(1,c_j)$.

Because $B_a(0)=0$, take the Schur complement of $T$ at $T_{00}=2$.
For $j=1,2$, put
\[
 h_j=\frac{B_a(w_j)}{c_j},\qquad
 M_{jk}=\frac{2-\bar h_jh_k}{1-\bar c_jc_k}.
\]
Then $T_{jk}-2=\bar c_jc_kM_{jk}$ and
$\det T=2|c_1c_2|^2\det M=\det M$.  Also
\[
 |h_j|=\frac{|w_j|}{|c_j|}
 \left|\frac{w_j-a}{1-\bar a w_j}\right|<\sqrt2.
\]
Hence the diagonal entries of $M$ are positive, and it is enough to prove
$\det M>0$.

On the two roots of \eqref{eq:realzeroroots},
\begin{equation}\label{eq:realzeroaffine}
 c^2=\frac{i}{\sqrt2\eta}-\frac{i}{\sqrt2}c,\qquad
 w=\frac i2+\left(\frac1{\sqrt2}-\frac{i\eta}{2}\right)c,\qquad
 \frac wc=\frac1{\sqrt2}+\frac\eta{\sqrt2}c.
\end{equation}
Thus $h=(w/c)(w-a)/(1-\bar a w)$ has a unique affine representative
$A+Lc$.  Its Cramer denominator is
\begin{equation}\label{eq:realzeroDelta}
 \Delta=(1-\bar a w_1)(1-\bar a w_2)\ne0.
\end{equation}
Put $y_j=h_j/\sqrt2$ and
\[
 Z=|c_1-c_2|^2,\quad C=|1-\bar c_1c_2|^2,\quad
 U=|1-\bar y_1y_2|^2,\quad \ell=|L|^2/2.
\]
The two-point determinant identity is
\begin{equation}\label{eq:realzeroPickdet}
 \det P_y=\frac{Z\Xi}{C(C-Z)},\qquad
 \Xi=U-\ell C,\qquad M=2P_y.
\end{equation}

Write $a=u+iv$ and $x=|a|^2=u^2+v^2<1$.  Exact reduction of
\eqref{eq:realzeroPickdet} by \eqref{eq:realzeroVieta} and
\eqref{eq:realzeroaffine} gives the root-free formula
\begin{equation}\label{eq:realzeroXi}
 \Xi=\frac{N}{16H},\qquad H=8|\Delta|^2>0,\qquad N=E+puK,
\end{equation}
where
\begin{align}
E={}&71-8q^2-28\sqrt2q+x(24q^2-4\sqrt2q-41)\notag\\
 &+v(-8\sqrt2q^3-44q^2+44\sqrt2q-30)
 +vx(48q^2+4\sqrt2q-40)\notag\\
 &+v^2(-32q^2+8\sqrt2q+80),                         \label{eq:realzeroE}\\
K={}&4\{2\sqrt2q^2-5q+12\sqrt2+8qv-10\sqrt2v
                   -x(4q+7\sqrt2)\},                \label{eq:realzeroK}\\
H={}&8-5x+3x^2-8v+16v^2-6vx+4\sqrt2qv+8\sqrt2qv^2\notag\\
 &+2\sqrt2qx^2-6\sqrt2qx+4q^2vx
   +4\sqrt2pu(1-2v)-4pqux.                          \label{eq:realzeroH}
\end{align}
Furthermore
\begin{equation}\label{eq:realzeroCZ}
 C=\frac{5+\rho}{4},\qquad C-Z=\frac{5-\rho}{4},
 \qquad C(C-Z)=\frac{\sqrt2q-1}{2},
\end{equation}
and therefore
\begin{equation}\label{eq:realzerodetT}
 \det T=\frac{\sqrt{33-8\sqrt2q}\,N}
 {4H(\sqrt2q-1)}.
\end{equation}

We prove $N>0$.  The coefficient $E_x$ is affine in $v$, and
\[
 E_x(q,1)=72q^2-81\le-9,\qquad
 E_x(q,-1)=-24q^2-8\sqrt2q-1<0.
\]
Thus $E(q,x,v)\ge E(q,1,v)$.  The latter is a quadratic
$A(q)v^2+B(q)v+C(q)$ with $A(q)=8(10+\sqrt2q-4q^2)>0$.
The seven Bernstein controls of $4AC-B^2$ on
$q\in[1/\sqrt2,1]$ are
\[
\begin{gathered}
1152,\quad32(53-17\sqrt2),\quad
\frac{32(1179-596\sqrt2)}{15},\quad
\frac{8(2067-1229\sqrt2)}5,\\
\frac{4(12895-8172\sqrt2)}{15},\quad
\frac{4(1869-1210\sqrt2)}3,\quad4(-193+152\sqrt2).
\end{gathered}
\]
They are strictly positive by rational squaring, whence
\begin{equation}\label{eq:complexzeroEpositive}
 E>0.
\end{equation}

Since $p^2=1-q^2$ and $u^2=x-v^2$, set
\begin{equation}\label{eq:complexzeroF}
 F(q,x,v)=E^2-(1-q^2)(x-v^2)K^2.
\end{equation}
This is cubic in $x$.  Under
$x=v^2+(1-v^2)s$, $0\le s<1$, its exact cubic Bernstein expansion is
\begin{equation}\label{eq:complexzeroBernstein}
 F=B_0(1-s)^3+3B_1s(1-s)^2+3B_2s^2(1-s)+B_3s^3,
\end{equation}
where
\begin{align*}
B_0&=F(q,v^2,v)=E(q,v^2,v)^2,\\
B_1&=B_0+(1-v^2)F_x(q,v^2,v)/3,\\
B_2&=F(q,1,v)-(1-v^2)F_x(q,1,v)/3,\\
B_3&=F(q,1,v).
\end{align*}
We certify $B_1>0$ and $B_2,B_3\ge0$ on
$[1/\sqrt2,1]\times[-1,1]$.

Put $t=q-1/\sqrt2$ and $h=1-1/\sqrt2$.  Exact tensor Bernstein
conversion over $\mathbb Q(\sqrt2)$ gives positive controls for $B_1$ on
\[
 [0,h]\times[-1,0],\quad[0,h]\times[0,1/2],\quad
 [0,h]\times[1/2,1].                                 \tag{5.1}
\]
There are 147 controls, all strictly positive.  Next factor
\begin{equation}\label{eq:complexzeroPG}
 B_3=128(q-5\sqrt2/4)^2P(q,v),\qquad
 B_2=(5\sqrt2/4-q)G(q,v).
\end{equation}
For $P$, use $[0,h]\times[-1,0]$; each of the six $v$-intervals
\[
 [0,1/32],[1/32,1/16],[1/16,1/8],[1/8,1/4],
 [1/4,1/2],[1/2,1]
\]
over $t\in[1/5,h]$; and the last five of these intervals over
$t\in[0,1/5]$.  The resulting 12 boxes have 294 positive and six zero
controls.  The omitted local rectangle is positive because, for
$0\le t\le1/5$, $0\le v\le1/32$,
\begin{align}
P={}&(t^2-3v/2)^2+(4-26v-11\sqrt2t)v^2+40v^4\notag\\
 &+t(22\sqrt2v^3+16\sqrt2v^4)
 +t^2(6v^2+24v^3)+t^3(2\sqrt2v+4\sqrt2v^2),          \label{eq:complexzeroPlocal}
\end{align}
and $4-26v-11\sqrt2t\ge51/16-11\sqrt2/5>0$; the last
comparison is equivalent to $255^2>2\cdot176^2$.

For $G$, use $[0,h]\times[-1,0]$, $[0,1/64]\times[1/64,1]$, and
the following pairs of $v$-intervals:
\[
\begin{array}{c|c}
t\text{-interval}&v\text{-intervals}\\ \hline
{[1/64,1/32]}&[0,1/16],\ [1/16,1]\\
{[1/32,1/16]}&[0,1/8],\ [1/8,1]\\
{[1/16,1/8],\ [1/8,1/5],\ [1/5,h]}&[0,1/4],\ [1/4,1]\\
\end{array}
\]
These 12 boxes have 429 positive and three zero controls.  On the omitted
square $0\le t,v\le\delta:=1/64$, write
\[
 G=192\sqrt2t^2-\frac{512}{3}tv+1752\sqrt2v^2+R.
\]
The quadratic part is at least $2752(t^2+v^2)/15$.  Every monomial of $R$
has total degree at least three and is divisible by $t^2$ or $v^2$; using
$\sqrt2<3/2$, the sums of coefficient majorants in degrees $3$ through $8$
are
\[
 \frac{28216}{3},\ 10528,\ 4488,\ \frac{19120}{3},
 \frac{11072}{3},\ \frac{2816}{3}.
\]
Consequently
\begin{equation}\label{eq:complexzeroGlocal}
 |R|\le\frac{40143488841}{268435456}(t^2+v^2)
       <\frac{2752}{15}(t^2+v^2),
\end{equation}
so $G\ge0$ there as well.

Thus the final rectangular certificate contains $879$ exact controls:
$870$ are positive and nine are zero.  (The preliminary count $574$
corresponds to a coarser box list whose Bernstein controls are not all
nonnegative, and is not the proof certificate.)  No sign decision uses
floating-point arithmetic.

If $s=0$, then $F=B_0=E^2>0$; if $0<s<1$, the $B_1$ term in
\eqref{eq:complexzeroBernstein} is strictly positive and the other terms are
nonnegative.  Hence $F>0$, so \eqref{eq:complexzeroEpositive} gives
$|puK|<E$ and $N=E+puK>0$.  Formula \eqref{eq:realzerodetT} now proves
$\det M>0$ and completes the Schur-complement proof.
\end{proof}

\begin{remark}\label{rem:realzeroscope}
The lower endpoint $q=1/\sqrt2$ is excluded: one node reaches the unit
circle and $C(C-Z)=0$.  The endpoint $q=1$ is included and has $p=0$.
The strict condition $|a|<1$ ensures both $\Delta\ne0$ and $s<1$ above;
$|a|=1$ is used only as a closure face in the polynomial certificate.
The proof treats the complex phase directly rather than reducing to the
real-zero theorem.  The theorem nevertheless concerns only this
singular arc and products with one zero fixed at the origin.  It does not
settle arbitrary singular nodes, arbitrary quadratic inner functions, or
the unrestricted fixed-lens constant.
\end{remark}
\section{The unrestricted angle-dependent envelope}\label{sec:bounds}

For context, we now place \cref{thm:main} inside the best bounds that can be
verified from the literature cited below.  Define
\begin{align}
 L(\alpha)&:=\frac{\pi\sin\alpha}{2\alpha},\label{eq:L}\\
 K_{25}(\alpha)&:=1-\frac\alpha\pi+
 \sqrt{2-\frac{4\alpha}{\pi}+\frac{\alpha^2}{\pi^2}},\label{eq:K25}\\
 \mu(\alpha)&:=\frac{\sin(2\alpha)}{\pi}
 \int_0^\infty
 \frac{dy}{y^2\cos\alpha-2y\cos(2\alpha)+\cos\alpha},\label{eq:mu}\\
 K_{\mathrm{BC}}(\alpha)&:=2\frac{\pi-\alpha}{\pi}+\mu(\alpha).
\end{align}
The integral in \eqref{eq:mu} is positive and finite for
$0<\alpha<\pi/2$; its endpoint values are understood by limits.

\begin{proposition}[Verified envelope]\label{prop:envelope}
For $0<\alpha\le\pi/2$,
\begin{equation}\label{eq:envelope}
 L(\alpha)\le C(\alpha)\le
 \min\left\{
    K_{25}(\alpha),\ K_{\mathrm{BC}}(\alpha),\
    \frac{\pi-\alpha}{\alpha}
 \right\}.
\end{equation}
In particular,
\begin{equation}\label{eq:sub2}
                    C(\alpha)<2
        \qquad\text{whenever}\qquad \alpha>\frac\pi6.
\end{equation}
\end{proposition}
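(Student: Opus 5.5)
The lower bound $L(\alpha)\le C(\alpha)$ is already \cref{cor:lenscert}, so the work is in the upper bound. I will prove that each of the three quantities in the minimum dominates $C(\alpha)$ separately. Throughout I use \cref{prop:equiv} to identify $C(\alpha)$ with the sectorial numerical-range constant for $\cS_\alpha$. Each bound can then be imported from a published estimate stated for sectors or lenses.

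\emph{The three upper bounds.}
\begin{enumerate}
\item[(a)] For $(\pi-\alpha)/\alpha$, I will quote the Crouzeix--Delyon type sector bound from \cite{Crouzeix_2003}. It gives the constant $2+\pi/\dots$ in some normalisations, so I expect it to need rewriting into the form $(\pi-\alpha)/\alpha$. A check on this form: it equals $1$ at $\alpha=\pi/2$, consistent with von Neumann's inequality for the half-plane. The one routine task is matching the half-opening-angle conventions.
\item[(b)] $K_{\mathrm{BC}}$ is the Beckermann--Crouzeix lens estimate of \cite{MR2223270}, written with the Cauchy-kernel integral $\mu(\alpha)$. I will transcribe their double-layer-potential bound: the term $2(\pi-\alpha)/\pi$ comes from the boundary angle, and $\mu(\alpha)$ from the total variation of the kernel along the two arcs. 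I then need to check that their angle parameter is our half-angle $\alpha$. I also need to justify the limits at the endpoints by dominated convergence in \eqref{eq:mu}: the denominator is $\ge$ a positive multiple of $y^2+1$ uniformly for $\alpha$ away from $0$.
\item[(c)] $K_{25}$ would follow from the more recent refinement of the Crouzeix--Palencia argument for convex domains with corners. In that argument the $1+\sqrt2$ bound improves to $1-\alpha/\pi+\sqrt{(1-\alpha/\pi)^2+1-2\alpha/\pi}$. Expanding shows this is exactly \eqref{eq:K25}. I will present (c) as that estimate specialised to the lens, whose interior angles at the two vertices are $2\alpha$.
\end{enumerate}

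\emph{The consequence \eqref{eq:sub2}.} It suffices to exhibit one bound below $2$ on $(\pi/6,\pi/2]$. Since $\alpha>\pi/6$ implies $\pi/\alpha<6$, the bound $(\pi-\alpha)/\alpha$ is only $<5$, which is not enough. So I will use $K_{25}$. Putting $x=\alpha/\pi$, the inequality $K_{25}<2$ reads $\sqrt{2-4x+x^2}<1+x$. Squaring, this is $2-4x+x^2<1+2x+x^2$, i.e.\ $x>1/6$. This exactly matches the threshold, which confirms the choice. Because $K_{25}$ is continuous, there is nothing further to do at $\alpha=\pi/2$.

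The main obstacle is (b). It requires a faithful restatement of an external integral bound, with its angle normalisation and integral form converted into \eqref{eq:mu}; that is the step where conventions could silently differ. If that conversion proves awkward, the claim \eqref{eq:sub2} does not depend on it, since it uses only $K_{25}$. The minimum in \eqref{eq:envelope} would then need only a remark that each term is a valid upper bound.
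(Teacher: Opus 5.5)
Your outline matches the paper's: the lower bound from \cref{cor:lenscert}, three literature upper bounds transferred by \cref{prop:equiv}, and the check $K_{25}<2\iff\alpha>\pi/6$, which you carry out correctly. But the upper bound consists of nothing except importing external estimates, so the imports are the proof, and two of yours would not deliver the stated terms. In (a), a constant of the form $2+\pi/(\cdots)$ is at least $2$ under any angle convention. By contrast, $(\pi-\alpha)/\alpha<2$ on $(\pi/3,\pi/2]$ and equals $1$ at $\pi/2$, which is your own sanity check. No renormalisation turns one into the other. The paper instead takes $(\pi-\alpha)/\alpha$ as the classical sector bound recorded in \cite{MR2223270}, and attributes $K_{\mathrm{BC}}$ to the conic-domain argument of \cite{Beckermann2007}.

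In (c) you specialise to the wrong domain. A Crouzeix--Palencia-type estimate is a numerical-range statement: $W(A)\subset\Omega$ implies $\norm{f(A)}\le K\sup_\Omega|f|$. For $\Omega=\cL_\alpha$ no such $K$ can equal $K_{25}(\alpha)$. Take $A=2\tan(\alpha/2)J$ with $J=\bigl(\begin{smallmatrix}0&1\\0&0\end{smallmatrix}\bigr)$. Then $W(A)$ is the disk of radius $\tan(\alpha/2)=\csc\alpha-\cot\alpha$ about $0$, which lies in $\cL_\alpha$. Yet $\norm{A}=2\tan(\alpha/2)$ while $\max_{\cL_\alpha}|z|=1$. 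This ratio is $2$ at $\alpha=\pi/2$ and already exceeds $K_{25}$ at $\alpha=5\pi/12$ ($1.53>1.30$). The two-disk hypothesis is strictly stronger than $W(A)\subset\cL_\alpha$. The way to exploit that extra strength is the paper's: apply the aperture bound of \cite{crouzeix2025numericalrangesspectralsets} to $W(B)\subset\cS_\alpha$, whose sides are straight and whose only finite corner has aperture $2\alpha$, then transfer it with \cref{prop:equiv}.

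A smaller point: your dominated-convergence justification of the endpoint value of \eqref{eq:mu} fails at $\alpha=\pi/2$, which is the only endpoint in the stated range. There $\cos\alpha\to0$ and the denominator tends to $2y$, which has no integrable majorant. The limit $\mu(\alpha)\to0$ holds only because the factor $\sin2\alpha$ beats a logarithmic divergence, and showing this requires evaluating the integral explicitly.
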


\begin{proof}
The lower bound is \cref{cor:lenscert}.  The term
$(\pi-\alpha)/\alpha$ is the classical sector bound recorded in
\cite{MR2223270}; the conic-domain argument of
\cite{Beckermann2007} yields $K_{\mathrm{BC}}$.  Formula
\eqref{eq:K25} is the aperture bound proved in the preprint
\cite{crouzeix2025numericalrangesspectralsets}.  By \cref{prop:equiv}, all three sector bounds
apply to $C(\alpha)$.  Finally, direct algebra gives
$K_{25}(\alpha)<2$ exactly when $\alpha>\pi/6$.
\end{proof}

\begin{figure}[t]
\centering
\begin{tikzpicture}
\begin{axis}[
  width=.88\textwidth,
  height=.46\textwidth,
  xmin=0,xmax=.5,
  ymin=.95,ymax=2.48,
  xlabel={$\alpha/\pi$},
  ylabel={constant},
  xtick={0,.083333,.166667,.25,.333333,.416667,.5},
  xticklabels={$0$,$1/12$,$1/6$,$1/4$,$1/3$,$5/12$,$1/2$},
  ymajorgrids=true,
  grid style={gray!18},
  legend style={
    draw=gray!25,
    fill=white,
    fill opacity=.92,
    text opacity=1,
    at={(.03,.04)},
    anchor=south west
  },
  domain=.002:.5,
  samples=180
]
\addplot[deepblue,very thick] {sin(deg(pi*x))/(2*x)};
\addlegendentry{$L(\alpha)$ (sharp square-zero)}
\addplot[warmred,very thick,dashed] {1-x+sqrt(2-4*x+x^2)};
\addlegendentry{$K_{25}(\alpha)$}
\addplot[black!60,densely dotted,thick] {2};
\addlegendentry{$2$}
\end{axis}
\end{tikzpicture}
\caption{The exact square-zero constant and the 2025 aperture upper bound.
The vertical gap is the part not resolved by first-order nilpotent models.
The line $K_{25}=2$ is crossed at $\alpha/\pi=1/6$.}
\label{fig:bounds}
\end{figure}

At representative angles the two explicit curves in \cref{fig:bounds} are:
\begin{center}
\begin{tabular}{@{}ccccc@{}}
\toprule
$\alpha$ & $\pi/12$ & $\pi/6$ & $\pi/4$ & $\pi/3$ \\
\midrule
$L(\alpha)$ & $1.5529$ & $1.5000$ & $1.4142$ & $1.2990$ \\
$K_{25}(\alpha)$ & $2.2103$ & $2.0000$ & $1.7808$ & $1.5486$ \\
\bottomrule
\end{tabular}
\end{center}
These numbers are illustrations only; all claims in \cref{prop:envelope}
are exact symbolic inequalities.

The square-zero curve should not be confused with the full dimension-two
constant.  Crouzeix's characterization \cite[Theorem~4.2]{MR2047592}
determines $C(\cS_\alpha,2)$; in the strip limit it gives
$C(\cS_0,2)=1.5876598\ldots>\pi/2$ \cite{Crouzeix_2016}.  Thus even in
dimension two a pair of distinct spectral points can outperform every
one-point square-zero model.  At $\alpha=\pi/4$, by contrast, the
dimension-two value equals the square-zero value $\sqrt2$.

At the disk endpoint, with $\varepsilon=\pi/2-\alpha\downarrow0$,
\begin{align}
 L(\alpha)
 &=1+\frac{2\varepsilon}{\pi}
   +\left(\frac4{\pi^2}-\frac12\right)\varepsilon^2
   +O(\varepsilon^3),\label{eq:asymL}\\
 K_{25}(\alpha)
 &=1+\frac{4\varepsilon}{\pi}
   -\frac{8\varepsilon^2}{\pi^2}
   +O(\varepsilon^3).\label{eq:asymK}
\end{align}
Thus the known upper and lower slopes still differ by a factor of two.  At
the sharp-lens endpoint $\alpha\downarrow0$,
$L(\alpha)\to\pi/2$ whereas $K_{25}(\alpha)\to1+\sqrt2$.

\section{A recent uniform-two claim and what it does not settle}

After the preceding results, Jin posted a preprint claiming that the
numerical range is a $2$-spectral set and, as a corollary, that the
intersection of two spherical disks has uniform constant $2$
\cite{Jin_2026}.  The same preprint explicitly lists the optimal constant for
an individual crossing lens as open.  At the date on this manuscript the
item is labelled by its host as not peer reviewed.

If that theorem is accepted, one may add the term $2$ to the minimum on the
right of \eqref{eq:envelope}; none of the proofs in
\crefrange{sec:proof}{sec:bounds} uses it.  Even the uniform inequality
$C(\alpha)\le2$
would not identify $C(\alpha)$: for $\alpha>\pi/6$, \eqref{eq:sub2} is
already strictly smaller, and for every nontrivial angle the lower certificate
leaves a gap.

This distinction matters.  A uniform theorem answers
\[
       \sup_{0<\alpha\le\pi/2}C(\alpha)\le2,
\]
whereas the fixed-lens problem asks for the entire function
$\alpha\mapsto C(\alpha)$, including its extremal operators and functions.

\section{Consequences for the search for the exact constant}

The sharp calculation rules out a common first numerical strategy as a route
to the full answer: no optimization restricted to a single eigenvalue and a
square-zero perturbation can exceed $L(\alpha)$.  Any strict improvement of
the lower bound must therefore use at least one of the following mechanisms:
\begin{enumerate}
\item two or more spectral points whose Pick constraints interact;
\item a nilpotent chain of order at least three;
\item a genuinely infinite-dimensional sectorial operator;
\item a matrix-valued test function if the completely bounded constant is
      larger than the scalar one.
\end{enumerate}

The first two options lead to finite semidefinite programs: sample a
Herglotz representation after the power map, impose the two supporting
half-plane inequalities on a matrix $B$, and optimize the norm of the Pick
functional calculus.  A valid lower bound must be exported as exact algebraic
data (matrix, function, and norm certificate), not only as floating-point
output.  Conversely, a new upper bound requires a positive kernel or a
sum-of-squares decomposition valid for every sectorial operator.  The equality
conditions in \cref{lem:sp,lem:norm} supply useful constraints for both
searches.

\section{Conclusion}

The fixed crossing-lens constant is exactly the sectorial numerical-range
constant, but its unrestricted value is not presently determined.  We have
computed the constant on the full affine square-zero class and shown that it
equals the classical global lower curve $\pi\sin\alpha/(2\alpha)$, with a
closed-form $2\times2$ lens certificate.  The calculation is sharp,
dimension-free, and independent of any uniform-two theorem.  At the right
angle, the two exact sums of squares in \cref{sec:twozero} prove the
conjectural $\sqrt2$ estimate for the entire palindromic quadratic family
$u>0$, hence for positive-real and conjugate pairs of half-plane zeros, and
for every admissible operator.  The exact certificates in
\cref{thm:imagdiam,thm:cusp} cover the complete imaginary
post-automorphism diameter and an explicit two-real-dimensional cusp around
it, while the bivariate rational certificate in \cref{thm:centraldisk}
fills the full complex disk $|c|\le19/20$.  The exact radial, interpolation,
and matrix Bernstein certificates in \cref{thm:phasewedge} cross that circle
for $1/10\le t\le18/25$; together with reflection they reach all but
$9.1662$ degrees of the boundary-phase circle.  At the centre,
\cref{prop:andocentral} gives the strictly stronger bound
$\norm{w^2}\le\kappa_0<\sqrt2$ by an explicit polynomial bidisk extension.
The reserve in that extension also yields the genuinely
two-complex-parameter result \cref{thm:smallzeros}: every pair
$|\alpha|,|\beta|\le1/150$, including two independently phased nonzero
zeros, satisfies a strict $\sqrt2$ bound for every admissible operator.
This is an open target-zero neighborhood, not an arbitrary-function theorem.
\Cref{thm:symmetricthree,thm:asymmetricthree} supply a different kind of
exact advance: they prove the sharp identity multiplier on complete
three-node admissible-kernel cones, for every symmetric triple and for the
genuinely asymmetric patch $\rho=(-a,0,b)$,
$1/6\le a,b\le5/6$.  Their restriction to one multiplier and three nodes is
essential; \cref{rem:threescope} records the remaining determinant gates.
On the singular rank-one-side boundary, \cref{thm:realzero} proves the
quadratic target for the full arc $\Phi(c)=e^{i\theta}c^2$ and every complex
second Blaschke zero in the disk.  This is a genuine three-real-parameter
scalar-face theorem, but \cref{rem:realzeroscope} explains why it does not
close the arbitrary quadratic singular boundary.
\Cref{prop:twomoment} reduces the
remaining boundary parameters in this centred slice to the single variance inequality
\eqref{eq:variance}; this is an equivalence, not an asserted proof of that
inequality.  The unrestricted gap in
\eqref{eq:envelope} nevertheless cannot be closed by first-order Jordan
models or by these degree-two subfamilies alone.  The remaining complex
post-automorphism parameters---in particular the small phase gap about
$c=-1$ left by the radial certificate---the critical-point frequency, and
higher-order spectral interaction are the next obstacles.

\section*{Reproducibility statement}

All constants and matrices needed for the main result are displayed in
\eqref{eq:extsector} and \eqref{eq:Alens}.  The two disk constraints reduce to
the scalar identity in the proof of \cref{cor:lenscert}; the functional
calculus truncates by $J^2=0$.  For $u\ge2$, the displayed quartic,
discriminant, selector, and five-term identity give a wholly symbolic audit.
For the remaining computer-assisted identities, the ancillary README lists
the exact data, independent verifiers, and replay commands.  These verifiers
reconstruct the algebraic-number kernel compressions, hereditary coefficient
identities, rational positivity comparisons, radial and boundary-phase Gram
certificates, row-space identities, and the polynomial bidisk extension.
All proof decisions use exact rational or algebraic arithmetic; floating-point
discovery factors are not trusted in any proof.
\begingroup
\emergencystretch=2em
\hbadness=10000
For \cref{thm:symmetricthree},
\path{tmp/research/verify_three_node_rank22_elimination.py} reconstructs the
normalized multipliers and checks the phase-elimination, Bernstein, and
small-parameter factorization identities in exact SymPy arithmetic;
\path{tmp/research/right_angle_three_node_rank13_exact.py} independently
checks the polarized quotient identity and scalar-face normalization.
For \cref{thm:asymmetricthree},
\path{tmp/research/verify_three_node_rank22_asymmetric_box.py} reconstructs
the kernel invariants and phase criterion, verifies the global
sum-of-squares identity \eqref{eq:asymU}, and checks all $299$ rational
parameter Bernstein controls exactly.
For \cref{thm:realzero},
\path{tmp/research/verify_rank13_phi_theta_complex_a.py} independently
rebuilds the quadratic reduction, the root-free determinant, $879$ exact
tensor Bernstein controls over $\mathbb Q(\sqrt2)$, and the two local
corner certificates.  Separate referee and certificate audits in
\path{audit/RANK13_PHI_THETA_COMPLEX_A_REFEREE_AUDIT.md} and
\path{audit/RANK13_PHI_THETA_COMPLEX_A_CERTIFICATE_AUDIT.md} reconstruct the
disk, rank, Schur-complement, coverage, strictness, and endpoint arguments
not encoded by the script.
For \cref{thm:smallzeros},
\path{tmp/research/verify_global_two_small_zero_target_family.py} checks the
two complex curve factorizations, the pole-free denominator bound, and the
strict exact norm reserve.  The independent report
\path{audit/GLOBAL_TWO_SMALL_ZERO_TARGET_FAMILY_REFEREE_AUDIT.md}
reconstructs the conjugations, parameter boundary, Ando transfer, and
finite-kernel consequence.
The stable polynomial core is also compiled in Lean as
\path{formal/RightAngleThreeNodeRank22Algebra.lean}; its axiom audit reports
only \texttt{propext}, \texttt{Classical.choice}, and \texttt{Quot.sound}.
The narrower $B(w)=w^2$ singular-family algebra is checked in
\path{formal/RightAngleSingularW2Algebra.lean}; this does not formalize
\cref{thm:realzero} or the analytic Pick-theoretic bridges.
The plot evaluates only the explicit formulas \eqref{eq:L} and
\eqref{eq:K25}.
\par
\endgroup

\bibliographystyle{amsplain}
\bibliography{lens_constants}

\end{document}